\newtheorem{thm}{Theorem}
\newtheorem{cor}[thm]{Corollary}
\newtheorem{defn}[thm]{Definition}
\newtheorem{lem}[thm]{Lemma}
\let\uml\"
\newcommand{\R}{\mathbb R}
\newcommand{\C}{\mathbb C}
\newcommand{\Z}{\mathbb Z}
\newcommand{\F}{\mathbb F}
\newcommand{\triv}{_{\operatorname{triv}}}
\newcommand{\Gr}{\operatorname{Gr}}
\renewcommand{\S}[2]{S^{#1,#2}}
\newcommand{\Spq}{\S pq}
\newcommand{\D}[2]{D^{#1,#2}}
\newcommand{\pt}{\operatorname{pt}}
\newcommand{\rH}[2]{\tilde H^{#1,#2}}
\renewcommand{\H}[2]{H^{#1,#2}}
\newcommand{\Mt}{\mathbb{M}_2}
\newcommand{\M}[2]{\Sigma^{#1,#2}\Mt	}
\newcommand{\Zt}{\sfrac{\Z}{2}}
\newcommand{\Ct}{C_2}
\renewcommand{\part}{\operatorname{part}}
\definecolor{ggreen}{rgb}{0,.5,0}
\newcommand{\surj}{\twoheadrightarrow}
\newcommand{\inj}{\hookrightarrow}
\newcommand{\RP}{\R\text{P}}  
\newcommand{\sgn}{_{\operatorname{sgn}}}
\newcommand{\rs}{\operatorname{rowspace}}
\newcommand\srac[2]{\genfrac{}{}{0pt}{}{#1}{#2}}
\renewcommand{\H}[2]{H^{#1,#2}}
\newcommand{\Hbb}{\H\bullet\bullet}
\newcommand{\rank}{\operatorname{rank}}
\newcommand{\Smash}             {\wedge}
\newcommand{\iso}               {\cong}
\DeclareMathOperator{\Top}{Top}
\DeclareMathOperator{\trace}{trace}
\newcommand{\ds}{\displaystyle}
\DeclareMathOperator{\cok}{cok}
\newcommand{\jol}{Jack-o-lantern }
\newcommand{\HMtwo}[2]{
	  		\def\x{#1}
	  		\def\y{#2}
	  		\draw [->] (\x,\y) -- (\x,4.5);
	  		\draw [->] (\x,\y) -- (\x+4.5-\y,4.5);
	  		\draw [->] (\x,\y-2) -- (\x,-4.5);
	  	    \draw [->] (\x,\y-2) -- (\x-\y-2.5,-4.5);
}
\newcommand{\axisname}[1]{
			\draw (1,-5) node{#1};
	  	    \draw [dotted] (-1,0) -- (4,0);
	  	    \draw [dotted] (0,-4) -- (0,4);
	  	    \foreach \x in {1,...,3}
				\draw(\x,0)node[below]{$\x$};
	  	    \foreach \y in {1,...,4}{
				\draw(0,\y)node[left]{$\y$};
				\draw(0,-1*\y)node[left]{$-\y$};
			}
}
\newcommand{\justaxis}{
	  	    \draw [dotted] (-1,0) -- (4,0);
	  	    \draw [dotted] (0,-4) -- (0,4);
}
\title[$RO(\Ct)$-graded cohomology of Grassmannians]{$RO(\Ct)$-graded cohomology of equivariant Grassmannian Manifolds}
\author{Eric Hogle}  
\address{Department of Mathematics, Gonzaga University, Spokane, WA 99202} 
\email{hogle@gonzaga.edu}  
\keywords{equivariant topology, Grassmannian manifold, Bredon cohomology}
\subjclass[2010]{}
\begin{document} 
 
\begin{abstract}
We compute the $RO(\Ct)$-graded Bredon cohomology of certain families of real and complex $\Ct$-equivariant Grassmannians.
\end{abstract}

\maketitle
\tableofcontents


\section{Introduction}

If $V$ is a representation of the cyclic group $\Ct$, then the Grassmannian $\Gr_k(V)$ inherits a $\Ct$ action. We wish to compute the $RO(\Ct)$-graded Bredon cohomology of these equivariant spaces for various $k$ and $V$. In this paper we present formulas for the cohomologies of two infinite families of finite Grassmannians on real representations, their complex analogs, and also the cohomologies of analogous infinite-dimensional spaces. To do this we use an equivariant version of the Schubert cell construction, giving an equivariant cellular spectral sequence. In general, the differentials in such a spectral sequence are unknown. However, we find convenient situations whose differentials are actually manageable.

We will focus mostly on the real case, postponing complex Grassmannians until Section \ref{complexsection}. The group $\Ct$ has two irreducible real representations: $\R_{\triv}$ with trivial action, and $\R\sgn$ on which the nontrivial group element acts as multiplication by $-1$. The $RO(C_2)$-graded cohomology can therefore be regarded as bigraded. Let  $$\R^{p,q}=(\R_{\triv})^{p-q}\oplus(\R\sgn)^q.$$ Our cohomology theory is graded by both actual and virtual representations, so that a space $X$ with a $\Ct$-action has cohomology groups $\H pq(X;\underline{M})$ for any integer values of $p$ and $q$ and any Mackey functor $\underline{M}$. We will refer to $p$ and $q$ as the \textbf{topological dimension} and \textbf{weight}, respectively, and will sometimes use $|x|$ and $w(x)$ to denote the topological dimension and weight of a pair $x=(p,q)$. We will also refer to the \textbf{fixed-set dimension}, $p-q=|x|-w(x)$.\par 

We denote the one-point compactification of a representation by $S^{p,q}=\widehat\R^{p,q}$, whose underlying space is a $p$-sphere and whose fixed set is a ($p-q$)-sphere (hence the definition of fixed-set dimension above). 
We will be using the constant $\Zt$-valued Mackey functor throughout (analogous to $\Zt$ coefficients in singular cohomology), but these coefficients will be suppressed in the notation; we will write $\H pq(X)$ rather than $H^{\R^{p,q}} (X;\,\underline{\Zt})$. Note that $RO(\Ct)$-graded Bredon cohomology has a bigraded suspension isomorphism with respect to these representation spheres:
$$\rH\bullet\bullet(\S pq \Smash X)\iso \rH{\bullet-p}{\bullet-q}(X).$$ 
Non-equivariant singular cohomology will also appear, and similarly $H^\ast_{\text{sing}}(X)$ will always mean $H^\ast_{\text{sing}}(X;\,\Zt)$. \par

Let $\Gr_k(\R^{p,q})$ denote the manifold of $k$-planes in $p$-dimensional real space, with $C_2$-action induced by that on $\R^{p,q}$. We are interested in calculating $\H\bullet\bullet(\Gr_k(\R^{p,q}))$ as a module over $\Mt:=\H\bullet\bullet(\text{pt})$, the cohomology of a point. Because these spaces can be constructed from representation discs (as we will show in Section \ref{we2} using Schubert cells) their cohomology is known to be a free $\Mt$-module (see \cite{kronholm} or \cite{buddies}) comprised of suspensions $\Sigma^{a,b}\Mt=\rH\bullet\bullet(\S ab)$. And so

$$\H\bullet\bullet(\Gr_k(\R^{p,q}))=\bigoplus_{i}\Sigma ^{a_i,b_i}\Mt$$
where the total number of summands in topological degree $d$ is the rank of non-equivariant singular cohomology for the underlying space:
$$\#\{i:a_i=d\}=\rank H_{\text{sing}}^d(\Gr_k(\R^{p})).$$

However the associated weights $b_i$ were previously known in only a few easy cases. We produce formulas for more families of Grassmannians, namely those of the form $\Gr_k(\R^{n,1})$ and $\Gr_2(\R^{n,2})$. It should be noted that while in the non-equivariant case the Schubert-cell construction gives a chain complex with zero differentials, things will not be so simple here. Whether we progressively compute cohomologies of subspaces using cofiber sequences, or run a single spectral sequence for the Schubert cell filtration, we will in general see many nonzero differentials. \par

\subsection{Preliminaries}

The ground ring $\Mt$ of our theory is non-Noetherian, comprised of a polynomial subalgebra $\Zt[\rho,\tau]$ generated by elements $\rho\in\H11(\text{pt})$ and $\tau\in\H01(\text{pt})$, an element $\theta\in\H0{-2}(\text{pt})$ such that $\theta\rho=\theta\tau=\theta^2=0$, and also an infinite family of elements denoted $\frac{\theta}{\rho^i\tau^j}$ with the property that when $i'\le i$ and $j'\le j$, as the notation suggests, $\rho^{i'}\tau^{j'}\cdot \frac{\theta}{\rho^i\tau^j}=\frac{\theta}{\rho^{i-i'}\tau^{j-j'}}$. 
We will want to draw pictures of this ring.

\begin{figure}[h]
	\begin{tikzpicture}[scale=0.5]
		\justaxis
		\draw(0,0) node {$1$};
		\draw(0,1) node {$\tau$};
		\draw(1,1) node {$\rho$};
		\draw(0,2) node {$\tau^2$};
		\draw(1,2) node {$\rho\tau$};
		\draw(2,2) node {$\rho^2$};
		\draw(0,3) node {$\tau^3$};
		\draw(3,3) node {$\rho^3$};
		\draw(4,4) node {$\iddots$};
		\draw(0.5,4) node {$\vdots$};
		\draw(1.5,4) node {$\vdots$};
		\draw(2.5,4) node {$\iddots$};
		\draw(0,-2) node {$\theta$};
		\draw(0,-3) node {$\frac\theta\tau$};
		\draw(-1,-3) node {$\frac\theta\rho$};
		\draw(-2,-5) node {$\iddots$};
		\draw(-1,-5) node {$\vdots$};
		\draw(-2,-4) node {$\frac\theta{\rho^2}$};
		\draw(0,-4) node {$\frac\theta{\tau^2}$};
		\draw(-3,-5) node {$\iddots$};
		\draw(-0.25,-5) node {$\vdots$};
	\end{tikzpicture}
	\begin{tikzpicture}[scale=0.5]
		\axisname{}
		\HMtwo{0}{0}
		\draw(0,0) node {$\bullet$};
		\draw(0,1) node {$\bullet$};
		\draw(1,1) node {$\bullet$};
		\draw(0,2) node {$\bullet$};
		\draw(1,2) node {$\bullet$};
		\draw(2,2) node {$\bullet$};
		\draw(0,3) node {$\bullet$};
		\draw(1,3) node {$\bullet$};
		\draw(2,3) node {$\bullet$};
		\draw(3,3) node {$\bullet$};
		\draw(0,4) node {$\bullet$};
		\draw(1,4) node {$\bullet$};
		\draw(2,4) node {$\bullet$};
		\draw(3,4) node {$\bullet$};
		\draw(4,4) node {$\bullet$};
		\draw(0,-2) node {$\bullet$};
		\draw(0,-3) node {$\bullet$};
		\draw(-1,-3) node {$\bullet$};
		\draw(-1,-4) node {$\bullet$};
		\draw(-2,-4) node {$\bullet$};
		\draw(0,-4) node {$\bullet$};
		\draw(-3,-5) node {$\phantom{\iddots}$};
	\end{tikzpicture}
	\begin{tikzpicture}[scale=0.5]
		\justaxis{}
		\draw(4,0) node {$p$};
		\draw(0,5) node {$q$};
		\HMtwo{0}{0}
		\draw(-3,-5) node {$\phantom{\iddots}$};
	\end{tikzpicture}\\
	\caption{Several visual representations of $\Mt.$ Copies of $\Zt$ are represented with $\bullet$ in the middle representation. On the right-hand representation, the groups are merely implied.}
	\label{fig:1}
\end{figure}

In the third part of Figure \ref{fig:1}, we have labeled the $p$-axis (or dimension-axis) and the $q$-axis (or weight axis). We see the ring divided into a \textbf{top cone} consisting of elements of the form $\rho^i\tau^j$ and a \textbf{lower cone} of elements $\frac\theta{\rho^i\tau^j}$. Even this last representation can get messy, and so we will often abbreviate further. For example, we will see later that

$$\H\bullet\bullet(\Gr_2(\R^{4,1}))=\Mt\oplus\M11\oplus( \M21)^{\oplus 2}\oplus \M31\oplus \M42$$
and visualizing this free module will often be easier if we only worry about the generators of this free $\Mt$-module, as in Figure \ref{fig:shorthand}.
\begin{figure}[H]
	\begin{tikzpicture}[scale=0.4]
		\axisname{}
		\HMtwo{0}{0}
		\HMtwo{1.2}{0.9}
		\HMtwo{2}{1}
		\HMtwo{2.2}{0.9}
		\HMtwo{3}{1}
		\HMtwo{4.2}{1.9}
	\end{tikzpicture}
	\!\!\!\!\!
	\begin{tikzpicture}[scale=.6]
		\def\pmax{ 4 }
		\def\qmax{ 3 }
		\draw (0.5*\pmax,-1.5) node[anchor=south] {$   $};
		\def\points{(0.1, 0.1), (1.1, 1.1), (2.2, 1.1), (2.1, 1.2), (3.1, 1.1), (4.1, 2.1)}
		\draw[step=1cm,lightgray,very thin] (0,0) grid (\pmax+0.5,\qmax+.5);
		\draw[thick,->] (0,-0.5) -- (0,.5+\qmax);
		\draw[thick,->] (-0.5,0) -- (.5+\pmax,0);
		\foreach \x in {1,...,\pmax}
			\draw (\x cm,1pt) -- (\x cm,-1pt) node[anchor=north] {$\x$};
		\foreach \y in {1,...,\qmax}
			\draw (1pt,\y cm) -- (-1pt,\y cm) node[anchor=east] {$\y$};
		\foreach \pq in \points
			\draw \pq [fill] circle (.04);
	\end{tikzpicture}
	\quad
	\begin{tikzpicture}[scale=.6]
		\def\pmax{ 4 }
		\def\qmax{ 3 }
		\draw (0.5*\pmax,-1.5) node[anchor=south] {$   $};
		\draw[step=1cm,lightgray,very thin] (0,0) grid (\pmax+0.5,\qmax+.5);
		\draw[thick,->] (0,-0.5) -- (0,.5+\qmax);
		\draw[thick,->] (-0.5,0) -- (.5+\pmax,0);
		\foreach \x in {1,...,\pmax}
			\draw (\x cm,1pt) -- (\x cm,-1pt);
		\foreach \y in {1,...,\qmax}
			\draw (1pt,\y cm) -- (-1pt,\y cm);
		\draw (0,0) node[above right] {$1$};
		\draw (1,1) node[above right] {$1$};
		\draw (2,1) node[above right] {$2$};
		\draw (3,1) node[above right] {$1$};
		\draw (4,2) node[above right] {$1$};
	\end{tikzpicture}\\
	\caption{Several visual representations of $\H\bullet\bullet(\Gr_2(\R^{4,1}))$. The last of these is called a \textbf{rank chart}.}
	\label{fig:shorthand}
\end{figure}

\subsection{Warning}
	The shorthand in the second and third diagrams of Figure \ref{fig:shorthand} can be a mercy, but also runs the risk of deception, as certain bidegrees appear ``empty'' but aren't. For example, while it is clear from the leftmost diagram (with some squinting) that $\H22=(\Zt)^4$, this is not clear at a glance from the other two; we must remember to imagine the upper and lower cones.

\subsection{A forgetful long exact sequence}
The following theorem appearing in \cite{araki} relates this cohomology theory to singular cohomology (with $\Zt$ coefficients). Denote the equivariant Eilenberg-MacLane space representing $\H pq$ by $K(\Zt,p,q)$.

\begin{thm} 
\label{rholes}	
For fixed $q$, there is a long exact sequence 
	\[\dots\to\H pq(X)\xrightarrow{\cdot\rho}\H{p+1}{q+1}(X)\xrightarrow{\psi}H^{p+1}_{\text{sing}}(X)\to\H{p+1}q(X)\xrightarrow{\cdot\rho}\dots\]
	where $\psi$ is the \textbf{forgetful map} $[X,K(\underline{\Zt},p,q)]_{C_2-\Top}\to [X,K(\Zt,p)]_{\Top}$
\end{thm}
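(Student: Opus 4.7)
The plan is to obtain the sequence as the Puppe long exact sequence attached to the single $\Ct$-equivariant cofiber sequence
$$\Ct_+ \to \S 00 \to \S 11,$$
in which the left map collapses the two non-basepoints of $\Ct_+$ to the non-basepoint of $\S 00$. A direct check identifies the cofiber as $\S 11$: it is formed from $\S 00$ by attaching two intervals running from the basepoint to the non-basepoint, with $\Ct$ swapping them, yielding a circle with two antipodal fixed points---the sign-representation sphere. Smashing with $X_+$ preserves the cofiber sequence, so applying $\rH{p+1}{q+1}$ delivers a long exact sequence.

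Next I would identify the three cohomology groups. The bigraded suspension isomorphism gives
$$\rH{p+1}{q+1}(\S 11 \Smash X_+) \iso \H pq(X),$$
and the middle term is simply $\H{p+1}{q+1}(X)$. For the $\Ct_+$-smash term, the standard untwisting homeomorphism $\Ct_+ \Smash X_+ \iso \Ct_+ \Smash X^{\#}_+$ (where $X^{\#}$ carries the underlying trivial action) exhibits it as an induced $\Ct$-space; the change-of-groups isomorphism for the constant Mackey functor $\underline{\Zt}$ then yields
$$\rH{n}{m}(\Ct_+\Smash X_+) \iso H^n_{\text{sing}}(X),$$
independently of $m$. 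Threading the Puppe sequence through these identifications, with $q$ held fixed, reproduces the sequence in the statement.

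Finally, identify the maps. The class $\rho$ is by construction the restriction of the orientation class in $\rH 11(\S 11)$ along the fixed-point inclusion $\S 00 \hookrightarrow \S 11$, so after the suspension identification, pullback along the smashed inclusion $X_+ \to \S 11 \Smash X_+$ becomes cup product with $\rho$. The map induced by the collapse $\Ct_+ \Smash X_+ \to X_+$ is, under the change-of-groups identification, the forgetful map $\psi$: precomposing a representing $\Ct$-map $X \to K(\underline{\Zt}, p+1, q+1)$ with the collapse and invoking the induction adjunction extracts its underlying non-equivariant homotopy class into $K(\Zt, p+1)$. I expect the main technical obstacle to be spelling out this last identification carefully; it amounts to checking that the change-of-groups isomorphism is natural with respect to the collapse maps, and that this naturality agrees with the representability definition of $\psi$. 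The remainder is routine use of the bigraded Puppe sequence and suspension isomorphism.
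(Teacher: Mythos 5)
This theorem is quoted in the paper from Kronholm's article and is not proved there, so there is no in-paper argument to compare against. Your proposal is correct and is essentially the standard (and original) proof: apply $\widetilde{H}^{\bullet,\bullet}(-\wedge X_+)$ to the cofiber sequence $(C_2)_+\to S^{0,0}\to S^{1,1}$, identify the outer terms via the suspension isomorphism and the free--forgetful (change-of-groups) isomorphism for the constant Mackey functor, and observe that the two maps become multiplication by $\rho$ (the Euler class of the sign representation, represented by the fixed-point inclusion $S^{0,0}\hookrightarrow S^{1,1}$) and the forgetful map to singular cohomology. All three identifications and both map identifications check out, including the degree bookkeeping for the connecting map landing in $H^{p+1,q}(X)$.
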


It is clear that $\psi:\Mt=\H\bullet\bullet(\pt)\to H_{\text{sing}}^\bullet(\pt)$ takes $\rho$ to 0. Notice this implies that $\psi(\theta)=0$, since $\theta$ is $\rho$-divisible. We will also make use of the fact that $\psi(\tau)=1$. 
 These facts have a nice geometric interpretation using the Dold-Thom model of Eilenberg-MacLane spaces. We omit this interpretation, but geometric models for $\rho$, $\tau$ and $\theta$ can be found in Proposition 4.5 of \cite{clover}.

\theoremstyle{definition}

\begin{defn}
	\label{def:repcell}
	A \textbf{representation disc} $D^{p,q}=D(\R^{p,q})$ is the closed unit disc in a representation, and a \textbf{representation cell} $e^{p,q}$ is its interior. A space which can be built from representation cells by the usual gluing diagrams (now with equivariant attaching maps out of $\partial D^{p,q}$) is said to have a \textbf{representation cell structure}.
\end{defn}

We will make use of Kronholm's\footnote{This theorem is true, however the proof given in \cite{kronholm} is problematic. For another proof see in \cite{buddies}.} freeness theorem.

\begin{thm}[Kronholm]
	\label{freeness}
	If a (locally finite, finite-dimensional) $\Ct$-space $X$ has a representation cell structure then it has free cohomology:
	\[\H\bullet\bullet(X)=\bigoplus_i\Sigma^{a_i,b_i}\Mt=\bigoplus_i\rH\bullet\bullet(\S{a_i}{b_i})\]
	for some bidegrees $\{(a_i,b_i)\}_i$.
\end{thm}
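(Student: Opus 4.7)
The plan is to induct on a filtration $X^{(0)} \subseteq X^{(1)} \subseteq \cdots \subseteq X$ where $X^{(n)}$ is obtained from $X^{(n-1)}$ by attaching a single representation cell $e^{p,q}$; the local finiteness and finite dimensionality hypotheses ensure we can enumerate cells in such an order. The base case is a disjoint union of points, whose cohomology is a sum of copies of $\Mt$ and thus manifestly free. For the inductive step, the cofiber sequence $X^{(n-1)} \hookrightarrow X^{(n)} \to X^{(n)}/X^{(n-1)} \simeq S^{p,q}$ induces a long exact sequence in $\rH\bullet\bullet$ connecting $\rH\bullet\bullet(X^{(n)})$ to the (by hypothesis) free module $\rH\bullet\bullet(X^{(n-1)})$ and to $\rH\bullet\bullet(S^{p,q}) = \Sigma^{p,q}\Mt$.

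First I would dispatch the easy case, when the connecting homomorphism $\delta$ in this LES is zero. The sequence then splits into short exact sequences
\[0 \to \Sigma^{p,q}\Mt \to \rH\bullet\bullet(X^{(n)}) \to \rH\bullet\bullet(X^{(n-1)}) \to 0,\]
and freeness of the rightmost term makes it projective, producing a splitting. Then $\rH\bullet\bullet(X^{(n)})$ is a direct sum of two free $\Mt$-modules, so it is free.

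The hard part, and the main obstacle, is the case $\delta \neq 0$. Since $\Mt$ is far from a PID, the cokernel of $\delta$ (a quotient of $\Sigma^{p,q}\Mt$) and the kernel of $\delta$ (a submodule of $\rH\bullet\bullet(X^{(n-1)})$) are not automatically free, and it is not obvious that their extension is either. Here the forgetful long exact sequence (Theorem \ref{rholes}) becomes essential: naturality identifies $\delta$'s interaction with the singular connecting map, and since $H^\ast_{\text{sing}}(X^{(n)})$ is a $\Zt$-vector space of total rank equal to the number of cells, this forces $\mathrm{im}(\delta)$ to lie in $\ker\psi$, which is essentially the $\rho$-divisible portion of $\Sigma^{p,q}\Mt$. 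One then argues that the resulting extension can be trivialized by replacing what looked like a free generator of $\rH\bullet\bullet(X^{(n-1)})$ at bidegree $(a,b)$ with a new free generator at a shifted bidegree $(a, b+k)$, for a $k$ dictated by where $\delta$ lands. Cataloguing which such shifts occur and verifying that the shifted collection indeed generates $\rH\bullet\bullet(X^{(n)})$ freely over all of $\Mt$ (including the lower cone in $\theta$) is the delicate bookkeeping at the technical heart of the claim, and is precisely what the forthcoming proof in \cite{buddies} must make rigorous.
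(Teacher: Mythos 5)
The paper does not prove this statement: it is quoted from Kronholm, with a footnote recording that the published proof is flawed and that a corrected argument is forthcoming in \cite{buddies}. So the question is whether your sketch closes that gap, and it does not. Your setup is the standard one --- induct over a one-cell-at-a-time filtration and analyze the long exact sequence of the cofiber sequence $X^{(n-1)}\hookrightarrow X^{(n)}\to S^{p,q}$ --- and your treatment of the case $\delta=0$ is correct. But the case $\delta\neq 0$, which you rightly identify as the heart of the theorem, is only described, not proved: you assert that ``one then argues that the resulting extension can be trivialized'' by shifting a generator, and then defer the verification to \cite{buddies}. That verification (that $\ker\delta$ and $\cok\delta$ are free, that the resulting extension splits after replacing generators at shifted bidegrees, and that the shifts are the correct ones) is precisely the step where Kronholm's own argument breaks down, so leaving it as ``delicate bookkeeping'' leaves the theorem unproved.

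There is also a concrete error en route. You claim the forgetful sequence forces $\mathrm{im}(\delta)\subseteq\ker\psi$ because ``$H^\ast_{\text{sing}}(X^{(n)})$ is a $\Zt$-vector space of total rank equal to the number of cells.'' That holds for the Schubert-cell structures on Grassmannians used later in the paper, where all nonequivariant cellular differentials vanish mod $2$, but it is false for a general representation cell complex: the underlying nonequivariant cochain complex can have nonzero differentials (attach $e^{2,0}$ to $S^{1,0}$ by a degree-one map, for instance), in which case naturality gives $\psi(\delta(x))=d(\psi(x))\neq 0$ for some $x$. Kronholm's theorem is stated for arbitrary representation cell structures, so a proof must handle differentials that survive the forgetful map (which must cancel a free summand cleanly) as well as those landing in the lower cone (which produce the shifts). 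As written, the proposal is an accurate roadmap with the essential chasm uncrossed.
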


The bidegrees $(a_i,b_i)$ need not coincide with those of the representation cells used to build $X$, as the weights $b_i$ may differ. While the cohomologies of many families of Grassmannians remain unknown, we next present the known results.

\subsection{Formulas}

Kronholm calculated the cohomology of the various projective spaces $\Gr_1(\R^{p,q})=\mathbb{P}(\R^{p,q})$. Taking $p\ge 2q$, 

\[\H\bullet\bullet(\Gr_1\R^{p,q})
=\Mt\oplus\bigoplus_{i=1}^{q-1}(\M {2i-1}i\oplus \M {2i}i)\oplus\bigoplus_{j=2q-1}^{p-1}\M jq.\]
For example, $\H\bullet\bullet(\mathbb{P}(\R^{11,4}))$ is represented below. 
\begin{center}
	\begin{tikzpicture}[scale=0.55]
		\def\pmax{ 10 }
		\def\qmax{ 4 }
		\draw (0.5*\pmax,-1.5) node[anchor=south] {$   $};
		\def\points{(0.2, 0.2), (1.2, 1.2), (2.2, 1.2), (3.2, 2.2), (4.2, 2.2), (5.2, 3.2), (6.2, 3.2), (7.2, 4.2), (8.2, 4.2), (9.2, 4.2), (10.2, 4.2)}
		\draw[step=1cm,lightgray,very thin] (0,0) grid (\pmax+0.5,\qmax+.5);
		\draw[thick,->] (0,-0.5) -- (0,.5+\qmax);
		\draw[thick,->] (-0.5,0) -- (.5+\pmax,0);
		\foreach \x in {1,...,\pmax}
			\draw (\x cm,1pt) -- (\x cm,-1pt) node[anchor=north] {$\x$};
		\foreach \y in {1,...,\qmax}
			\draw (1pt,\y cm) -- (-1pt,\y cm) node[anchor=east] {$\y$};
		\foreach \pq in \points
			\draw \pq [fill] circle (.04);
	\end{tikzpicture}
\end{center}

In Section \ref{knone} we prove a theorem for the family $\Gr_k(\R^{n,1})$, extending results of \cite{kronholm} for $\Gr_2(\R^{n,1})$. As in \cite{dugger}, define the $\Mt$-rank of a free $\Mt$-module $M$ by letting $I=\ker(\Mt\to\Zt)$ and set 
$$\rank^{p,q}_{\Mt}(M)=\dim_{\Zt}(\sfrac M{IM})^{p,q}.$$

Given non-negative, weakly-increasing numbers $\lambda_1, \lambda_2, \ldots, \lambda_k$, define the \textbf{trace} of this collection to be $\#\{i:\lambda_i\ge k-i+1\}=t$. If a $\lambda_i$ is interpreted as a Young diagram, its trace represents the number of boxes on the main diagonal. See Table \ref{table:tracetable} for examples. Let $\part(p,k,m,t)$ denote the number of partitions of $p$ into $k$ numbers $\lambda_i\le m$ having trace $t$. Using this definition, we state the following theorem.

\begin{thm}
	\label{kn1thm}
	\[\rank_{\Mt}^{p,q}\H \bullet\bullet(\Gr_k(\R^{n,1}))=\part(p,k,n-k,q).\]
\end{thm}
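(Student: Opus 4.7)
}

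The plan is to build an equivariant Schubert cell decomposition of $\Gr_k(\R^{n,1})$ whose cells are representation cells of bidegree $(|\lambda|, t(\lambda))$, indexed by partitions $\lambda=(\lambda_1\le\cdots\le\lambda_k)$ with $\lambda_i\le n-k$ and trace $t(\lambda)=\#\{i:\lambda_i\ge k-i+1\}$, and then to show that the resulting free $\Mt$-module decomposition of cohomology has its generators in exactly these bidegrees. The topological parts $a_i=|\lambda|$ already recover the classical fact that the non-equivariant Schubert-cell count in dimension $p$ equals $\rank H^p_{\text{sing}}(\Gr_k(\R^n))$, so the real content is the weight bookkeeping.

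First I would fix a $C_2$-invariant complete flag $0\subset F_1\subset\cdots\subset F_n=\R^{n,1}$, for instance with $F_i=\R^{i,0}$ for $i<n$. Because the flag is $C_2$-invariant, each classical open Schubert cell $\Omega_\lambda^\circ$ is $C_2$-invariant, and under the reduced row-echelon parametrization it becomes an affine space $\R^{|\lambda|}$ on which $C_2$ acts linearly, hence splits as a sum of trivial and sign coordinates. I would then verify by direct bookkeeping that this decomposition yields $\Omega_\lambda^\circ\cong\R^{|\lambda|,t(\lambda)}$ as a $C_2$-representation: the sign action negates the last column of each matrix, and after re-reducing, the free entries that pick up a sign are precisely those lying in rows whose pivot is ``pushed to'' the sign coordinate, i.e.\ those rows $i$ with $\lambda_i\ge k-i+1$. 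Kronholm's theorem then applies to give a free decomposition $\H\bullet\bullet(\Gr_k(\R^{n,1}))\cong\bigoplus_i\Sigma^{a_i,b_i}\Mt$ in which $a_i$ ranges over the cell dimensions $|\lambda|$.

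The subtle final step is to pin down the weights $b_i$, since Kronholm's theorem only asserts freeness and a priori allows $b_i$ to exceed the cell weight $t(\lambda)$. I would handle this either by inducting up the Schubert filtration and combining the cofiber long exact sequences with the forgetful long exact sequence of Theorem \ref{rholes}, or by running the Schubert filtration spectral sequence and showing its differentials vanish. In either approach the $\rho$-action from the forgetful sequence, together with the known non-equivariant rank counts in each topological dimension, leaves no room for weights to shift upward while keeping the module free.

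The main obstacle is the coordinate bookkeeping in the sign-action calculation: one must show that exactly $t(\lambda)$ of the $|\lambda|$ free RREF entries become sign coordinates. I expect the trace inequality $\lambda_i\ge k-i+1$ to emerge naturally from the requirement that the $i$-th pivot row of the RREF matrix must be rescaled by $-1$ in order to re-reduce after the sign action hits the last column. Once that identification is done cleanly, the freeness and bidegree matching in the final step should follow by standard Kronholm-type forbidden-differential arguments, since the ambient representation $\R^{n,1}$ has only a single sign direction and so severely restricts how weights can twist.
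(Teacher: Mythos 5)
Your overall architecture --- an equivariant Schubert cell structure whose cells have bidegree $(|\lambda|,\trace(\lambda))$, followed by a collapse argument --- is the paper's, but two concrete steps fail as written. First, the flag you fix, $F_i=\R^{i,0}$ for $i<n$, puts the sign coordinate in the \emph{last} position, and for that decomposition the Schubert cell weights are not the traces. With the sign coordinate last, column $n$ of a canonical-form matrix is identically zero unless the last pivot satisfies $j_k=n$, so $\Omega_\lambda$ has weight $0$ when $\lambda_k<n-k$ and weight $\lambda_k=n-k$ otherwise. For $\Gr_2(\R^{4,1})$ this yields cells in bidegrees $(0,0),(1,0),(2,0),(2,2),(3,2),(4,2)$ rather than the correct $(0,0),(1,1),(2,1),(2,1),(3,1),(4,2)$; that construction therefore has unavoidable nonzero differentials and Kronholm shifts, and does not prove the theorem directly. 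The bookkeeping you describe --- signs appearing exactly in rows with $\lambda_i\ge k-i+1$, i.e.\ $j_i\ge k+1$ --- is what happens for the decomposition $\R^{n,1}=\R\triv^{k-1}\oplus\R\sgn\oplus\R\triv^{n-k}$, with the sign coordinate in position $k$: each row whose pivot lies past column $k$ acquires exactly one minus (at its free entry in column $k$), or, if some pivot lands in column $k$, say in row $r$, rescaling that row negates its $k-r$ free entries; either count equals $\trace(\lambda)$ (Lemma \ref{tracelemma}). You need to commit to that flag, not the sign-last one.

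Second, the assertion that the forgetful sequence and the non-equivariant ranks ``leave no room for weights to shift upward while keeping the module free'' is not an argument: Kronholm shifts are precisely weights shifting while the answer stays free, and they do occur for Grassmannians (see Section \ref{we1} and the sign-last construction above). The paper rules out differentials by combining three facts: a differential must carry the top cone of one generator into the lower cone of another (Lemma \ref{jackolemma}), it can only run from $\Omega_\alpha$ to $\Omega_\beta$ when $\alpha\subset\beta$ (Lemma \ref{alphabeta}, using the hierarchical structure of Schubert varieties), and a top-to-bottom differential forces $|\alpha|-w(\alpha)>|\beta|-w(\beta)$, which is impossible because adding a box to a Young diagram raises the dimension by one but the trace by at most one, so $\alpha\subset\beta$ gives $|\alpha|-w(\alpha)\le|\beta|-w(\beta)$. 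That last inequality is where the trace-equals-weight identification earns its keep, and none of these three ingredients appears in your sketch.
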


In words, the free generators of $\H pq(\Gr_k(\R^{n,1}))$ having degree $(p,q)$ are counted by trace-$q$ Young diagrams of $p$ boxes fitting inside of a $k$-by-$(n-k)$ box. This formula lets us calculate cohomologies like that of $\Gr_{4}(\R^{9,1})$, shown in Figure \ref{fig:491}.

\begin{figure}[H]
	\begin{tikzpicture}[scale=0.6]
		\def\pmax{20}
		\def\qmax{4}
		\draw(21,0) node {$p$};
		\draw(0,5) node {$q$};
		\draw[step=1cm,lightgray,very thin] (0,0) grid (\pmax+0.5,\qmax+.5);
		\draw[thick,->] (0,-0.5) -- (0,.5+\qmax);
		\draw[thick,->] (-0.5,0) -- (.5+\pmax,0);
		\draw (5.3,0) node[below] {\rotatebox{-90}{$5$}};
		\draw (10.3,0) node[below] {\rotatebox{-90}{$10$}};
		\draw (15.3,0) node[below] {\rotatebox{-90}{$15$}};
		\draw (20.3,0) node[below] {\rotatebox{-90}{$20$}};
		\draw(-.4,2.2) node {$2$};
		\draw(-.4,4.2) node {$4$};
		\draw (19.16,4.3) node {$1$};
		\draw (18.16,4.3) node {$1$};
		\draw (16.16,3.3) node {$4$};
		\draw (20.16,4.3) node {$1$};
		\draw (1.16,1.3) node {$1$};
		\draw (8.3,2.3) node {$10$};
		\draw (9.16,3.3) node {$1$};
		\draw (14.16,2.3) node {$1$};
		\draw (3.16,1.3) node {$3$};
		\draw (2.16,1.3) node {$2$};
		\draw (5.16,1.3) node {$4$};
		\draw (10.16,3.3) node {$2$};
		\draw (7.16,2.3) node {$7$};
		\draw (12.16,2.3) node {$5$};
		\draw (13.16,3.3) node {$7$};
		\draw (7.16,1.3) node {$2$};
		\draw (4.16,1.3) node {$4$};
		\draw (16.16,4.3) node {$1$};
		\draw (9.3,2.3) node {$10$};
		\draw (6.16,1.3) node {$3$};
		\draw (11.16,3.3) node {$4$};
		\draw (14.16,3.3) node {$7$};
		\draw (6.16,2.3) node {$5$};
		\draw (12.16,3.3) node {$6$};
		\draw (13.16,2.3) node {$2$};
		\draw (15.16,3.3) node {$6$};
		\draw (4.16,2.3) node {$1$};
		\draw (11.16,2.3) node {$7$};
		\draw (17.16,3.3) node {$2$};
		\draw (8.16,1.3) node {$1$};
		\draw (0.16,0.3) node {$1$};
		\draw (17.16,4.3) node {$1$};
		\draw (18.16,3.3) node {$1$};
		\draw (5.16,2.3) node {$2$};
		\draw (10.3,2.3) node {$10$};
	\end{tikzpicture}
	\caption{Rank chart for $\H\bullet\bullet(\Gr_4(\R^{9,1}))$.}
	\label{fig:491}
\end{figure}

For example, the 5 in bidegree $(6,2)$ says that $\rank_{\Mt}^{6,2}(\H\bullet\bullet(\Gr_4(\R^{9,1})))=5$ which is counted by $\part(6,4,8-4,2)$, the number of partitions of 6 into 4 numbers each at most $4$, with trace $t=\#\{i:\lambda_i\ge k-i+1\}=2$. These are the starred entries in Table \ref{table:tracetable}.
\begin{table}[h]
	\begin{tabular}{cc|c|c}
		&Partition of 6 & Trace & Young Diagram\\
		\hline
		*&0+0+2+4 &2& \scalebox{0.5}{\young({\,}\diagup,\diagup{\,}{\,}{\,})} {\color{white}$\ds\int$}\\
		*&0+0+3+3 &2& \scalebox{0.5}{\young({\,}\diagup{\,},\diagup{\,}{\,})}{\color{white}$\ds\int$}\\
		&0+1+1+4 &1& \scalebox{0.5}{\young({\,},{\,},\diagup{\,}{\,}{\,})}{\color{white}$\ds\int$}\\
		*&0+1+2+3 &2& \scalebox{0.5}{\young({\,},{\,}\diagup,\diagup{\,}{\,})}{\color{white}$\ds\int$}\\
		*&0+2+2+2 &2& \scalebox{0.5}{\young({\,}{\,},{\,}\diagup,\diagup{\,})}{\color{white}$\ds\int$}\\
		&1+1+1+3 &1& \scalebox{0.5}{\young({\,},{\,},{\,},\diagup{\,}{\,})}{\color{white}$\ds\int$}\\
		*&1+1+2+2 &2& \scalebox{0.5}{\young({\,},{\,},{\,}\diagup,\diagup{\,})}{\color{white}$\ds\int$}\\
	\end{tabular}
	\caption{Partitions of $6$ into $4$ nonnegative integers not exceeding 4, and their traces.}
	\label{table:tracetable}
\end{table}

\subsection{Comment}
\label{foreshadow}
The reader may have noticed that the rows of the rank table in Figure \ref{fig:491} are palindromes. There is a simple combinatorial reason for this, which we will give in Section \ref{upside}.\\

In Section \ref{twoeighttwo} we will also prove the following:

\begin{thm}
	\label{uglyformula}
	The cohomology of $\Gr_2(\R^{n,2})$ with $n\ge 6$, is given by
	\begin{align*}
		\H\bullet\bullet(\Gr_2(\R^{n,2}))&=
		\Mt\oplus \M11\oplus \M21
	 	 \oplus\M22\oplus (\M32)^{\oplus 2}\oplus (\M42)^{\oplus 3}\\
		 &\oplus\bigoplus_{p=5}^{n-2}(\M p2)^{\oplus 2}\oplus\M{n-1}2\\	 
		 &\oplus \M53\oplus\bigoplus_{p=6}^n(\M p3)^{\oplus 2}
		 \oplus \M{n+1}3\\
		 &\oplus \bigoplus_{p=8}^{n+1}(\M p4)^{\oplus\lceil\frac{p-7}2\rceil}\\
		 &\oplus \bigoplus_{p=n+2}^{2n-4}(\M p4)^{\oplus (n-1-\lceil\frac{p}2\rceil)}
	\end{align*}
\end{thm}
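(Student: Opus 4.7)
The plan is to combine three ingredients: the equivariant Schubert cell decomposition from Section \ref{we2}, Kronholm's freeness theorem, and the forgetful long exact sequence of Theorem \ref{rholes}. Fix a $\Ct$-invariant complete flag $F_\bullet$ on $\R^{n,2}$ in which the two sign directions sit in prescribed positions. The Schubert cells of $\Gr_2(\R^{n,2})$ relative to $F_\bullet$ are representation cells indexed by partitions $\lambda=(\lambda_1\le\lambda_2)$ fitting inside a $2\times (n-2)$ box, so $\Gr_2(\R^{n,2})$ acquires a representation cell structure (Definition \ref{def:repcell}). Kronholm's theorem then guarantees that $\H\bullet\bullet(\Gr_2(\R^{n,2}))$ is a free $\Mt$-module, and the non-equivariant Schubert calculus pins down the total number of free generators in each topological degree, namely the number of such partitions of that size. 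The theorem is really a statement about how the weights of those generators are distributed.

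First I would compute the bidegree $(|\lambda|,q_\lambda)$ attached to each Schubert cell, where $q_\lambda\in\{0,1,2\}$ records how many of the two sign directions of $\R^{n,2}$ are engaged by $\lambda$. The naive guess that $\H\bullet\bullet(\Gr_2(\R^{n,2}))$ has free generators precisely at these bidegrees is generally wrong: nontrivial attaching maps in the cellular spectral sequence cancel pairs of adjacent cells and replace them by a single summand of shifted weight. The strategy is to run the Schubert filtration one cell at a time, using the cofiber sequence $X_{i-1}\to X_i\to S^{p_i,q_i}$ and its long exact sequence in $\H\bullet\bullet$, and at each step decide which attaching map is trivial on cohomology and which is not.

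To pin down these differentials I would use two external constraints. First, at each fixed weight $q$ the forgetful long exact sequence of Theorem \ref{rholes} ties the rows $\H\bullet q$ and $\H\bullet{q+1}$ to $H^\bullet_{\text{sing}}(\Gr_2(\R^n))$, whose ranks are completely known. Combined with $\psi(\rho)=0$, $\psi(\tau)=1$, and $\psi(\theta)=0$, this constrains the weight distribution row by row. Second, Kronholm's freeness forces the final module to be free, so the cancellation pattern coming from any pair of cells must produce a shifted free generator rather than torsion; concretely one expects $\Sigma^{a,b}\Mt\oplus\Sigma^{a+1,b}\Mt$ to be replaced by $\Sigma^{a+1,b+1}\Mt$ whenever an attaching map is $\rho$-multiplication on cohomology.

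The main obstacle is the handful of bidegrees where even these combined constraints leave a residual ambiguity; in those cases a boundary map has to be computed directly. I plan to handle this by induction on $n$ using the equivariant inclusion $\Gr_2(\R^{n-1,2})\hookrightarrow \Gr_2(\R^{n,2})$ and its associated cofiber sequence to promote the answer one step at a time. The base case $n=6$ is small enough to be computed by hand, enumerating all $\binom{6}{2}=15$ Schubert cells and closing out the filtration explicitly. The palindrome symmetry foreshadowed in Section \ref{foreshadow}, together with matching the total $\Mt$-rank in each topological degree against the partition count, gives a useful double-check on the resulting weight distribution.
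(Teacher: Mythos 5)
Your setup (equivariant Schubert cells, Kronholm freeness, the forgetful sequence) matches the paper's starting point, but three things go wrong. First, the weight bookkeeping: for the decomposition $\R^{n,2}=\R^{+-+-}\oplus(\R^{+})^{n-4}$ the weight of a Schubert cell is the number of sign mismatches between its free variables and its pivot columns, and for $\Gr_2(\R^{n,2})$ this ranges up to $4$ (each of the two rows can pick up a mismatch against each of the two sign coordinates), not $\{0,1,2\}$; the statement you are proving visibly contains $\Sigma^{p,4}\Mt$ summands. Second, your model of what a nonzero attaching map does is not how Kronholm shifts work: a nonzero differential lands in the \emph{lower cone} ($\theta$-part) of the target, and its effect is that the source generator's weight goes up and the target's goes down, with the total number of free generators in each topological degree unchanged --- this is forced by the forgetful sequence, since each $\Sigma^{a,b}\Mt$ contributes exactly one class to $H^{a}_{\text{sing}}$ regardless of $b$. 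Replacing $\Sigma^{a,b}\Mt\oplus\Sigma^{a+1,b}\Mt$ by a single $\Sigma^{a+1,b+1}\Mt$ would change the singular ranks and contradict freeness.

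Third, and most importantly, the step you defer (``a boundary map has to be computed directly'') is the entire content of the theorem once $n\ge 8$. For bidegree reasons there are many candidate differentials from the weight-$2$ cells $[1,p+2]$, $[2,p+1]$ into $\theta$ times the weight-$4$ cells $[5,p-1],[6,p-2],\dots$, and neither freeness nor the forgetful sequence distinguishes zero from nonzero here: both outcomes produce free modules with the correct singular ranks. The induction on $n$ via $\Gr_2(\R^{n-1,2})\hookrightarrow\Gr_2(\R^{n,2})$ does not reach these differentials either, because the source cell is new while the target cell already lies in the subspace, so the question simply reappears as an unresolved surjectivity problem in the long exact sequence of the pair. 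The paper's mechanism is Theorem \ref{fixedthm} together with Lemma \ref{alphabeta}: a differential from the filtration of $\Omega_\alpha$ to that of $\Omega_\beta$ can be nonzero only if $\alpha\subseteq\beta$ as Young diagrams (equivalently $\alpha\prec\beta$ for the jump sequences), and one checks that for every candidate pair the second jump of the source exceeds that of the target, so no containment holds and the differential vanishes. Without this geometric input --- and without the paper's comparison of several orderings of the same decomposition, which is what detects the one genuinely nonzero differential hitting $\theta[2,4]$ --- the computation cannot be closed out.
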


For example $\H\bullet\bullet(\Gr_2(\R^{10,2}))$ is represented in Figure \ref{fig:2ten2}. Note that each line of the formula in Theorem \ref{uglyformula} corresponds to a different circled region. The first is common to all of them (provided $n\ge 6$) and the next two stretch predictably as $n$ grows. The top row is made up of a region where ranks increase left-to-right every two steps, and another in which ranks decrease left-to-right in the same way. For $n\ge 6$ it is convenient to organize the data in this way, but we also calculate these cohomologies for $3\le n<6$ in Section \ref{twontwo}.

\begin{figure}[h]
	\begin{tikzpicture}[scale=0.7]
		\def\pmax{16.2}
		\def\qmax{4}
		\draw[step=1cm,lightgray,very thin] (0,0) grid (\pmax+0.5,\qmax+.5);
		\draw[thick,->] (0,-0.5) -- (0,.5+\qmax);
		\draw[thick,->] (-0.5,0) -- (.5+\pmax,0);
		\draw (7.15,3.3) node {$2$};
		\draw (10.15,4.3) node {$2$};
		\draw (9.15,3.3) node {$2$};
		\draw (14.15,4.3) node {$2$};
		\draw (2.15,1.3) node {$1$};
		\draw (6.15,2.3) node {$2$};
		\draw (9.15,4.3) node {$1$};
		\draw (10.15,3.3) node {$2$};
		\draw (7.15,2.3) node {$2$};
		\draw (16.15,4.3) node {$1$};
		\draw (6.15,3.3) node {$2$};
		\draw (13.15,4.3) node {$2$};
		\draw (2.15,2.3) node {$1$};
		\draw (15.15,4.3) node {$1$};
		\draw (1.15,1.3) node {$1$};
		\draw (3.15,2.3) node {$2$};
		\draw (0.15,0.3) node {$1$};
		\draw (11.15,4.3) node {$2$};
		\draw (8.15,2.3) node {$2$};
		\draw (4.15,2.3) node {$3$};
		\draw (5.15,3.3) node {$1$};
		\draw (8.15,3.3) node {$2$};
		\draw (9.15,2.3) node {$1$};
		\draw (5.15,2.3) node {$2$};
		\draw (11.15,3.3) node {$1$};
		\draw (12.15,4.3) node {$3$};
		\draw (8.15,4.3) node {$1$};
		\draw [blue] plot [smooth cycle] coordinates {(0,0) (4,1) (4.3,3) (1,2.1)};
		\draw [blue] plot [smooth cycle] coordinates {(5.1,2) (9.1,2) (9.1,2.7) (5.1,2.7)};
		\draw [blue] plot [smooth cycle] coordinates {(5.1,3) (11.1,3) (11.1,3.7) (5.1,3.7)};
		\draw [blue] plot [smooth cycle] coordinates {(8.1,4) (11.1,4) (11.1,4.7) (8.1,4.7)};
		\draw [blue] plot [smooth cycle] coordinates {(12.1,4) (16.1,4) (16.1,4.7) (12.1,4.7)};
		\draw(-.4,2.2) node {$2$};
		\draw(-.4,4.2) node {$4$};
		\draw (0.3,0) node[below] {\rotatebox{-90}{$0$}};
		\draw (5.3,0) node[below] {\rotatebox{-90}{$5$}};
		\draw (8.3,0) node[below] {\rotatebox{-90}{$n-2$}};
		\draw (9.3,0) node[below] {\rotatebox{-90}{$n-1$}};
		\draw (10.3,0) node[below] {\rotatebox{-90}{$n$}};
		\draw (11.3,0) node[below] {\rotatebox{-90}{$n+1$}};
		\draw (12.3,0) node[below] {\rotatebox{-90}{$n+2$}};
		\draw (16.3,0) node[below] {\rotatebox{-90}{$2n-4$}};
	\end{tikzpicture}
	\caption{Rank chart for $\H\bullet\bullet(\Gr_2(\R^{n,2}))$ with $n=10$.}
	\label{fig:2ten2}
\end{figure}

Analogous formulas for complex Grassmannians, whose cohomologies look similar but have generators with twice the topological degree and weight, will also appear in Section \ref{complexsection}.

\subsection{Note} 
	It should be remembered that while the rank table in Figure \ref{fig:2ten2} organizes all of the information about a free rank-45 $\Mt$-module much more pleasantly than a list of summands would, it may also leave too much to the imagination. For example, while bidegree $(4,0)$ appears empty, actually $\H40(\Gr_2(\R^{10,2}))=(\Zt)^4$, generated by the $\theta$-multiples of the generators of three distinct copies of $\M42$, and also the $\frac\theta\rho$-multiple of the generator of $\M53$. Likewise $\H23=(\Zt)^4$ is generated by $\tau\cdot1_{\M22}\in\M22$ as well as $\rho\tau\cdot 1_{\M11}\in\M11$, $\tau^2\cdot 1_{\M21}\in\M21$ and $\rho^2\tau \cdot 1_{\Mt}\in \Mt$.

\subsection{Acknowledgements} This work is based on the author's doctoral dissertation. It is the product of many conversations with both his thesis advisor Dan Dugger and with Clover May, who each came to the rescue repeatedly. The author is grateful to both of them, as well as to Kelly Poland for spotting an error, and to the anonymous referee for numerous useful suggestions. This work was partially funded by University of Oregon and Gonzaga University.
\\

\section{Background on the representation-cell structure}

Before we present and prove general results for these cohomologies, we will work a few manageable examples bare-handed, to give the reader a feel for equivariant long exact sequence computations. (Note this is distinct from the spectral sequence approach, which we will also make use of later.)

\subsection{Worked Example I}
\label{we1}
This example serves primarily to demonstrate the phenomenon of the ``Kronholm shift,'' found in \cite{kronholm} and \cite{buddies}.\par

When using a CW structure to calculate the singular cohomology of a space, we can work iteratively on skeleta, attaching one $k$-cell at a time. The cofiber sequence $X_{n-1}\inj X_n\to S^k$ then gives a long exact sequence, and if we know $H^i_{\text{sing}}(X_{n-1})$ and the differential $H^i_{\text{sing}}(X_{n-1})\xrightarrow{d} H^{i+1}_{\text{sing}}(S^k)$, we can (at least over $\Zt$) deduce $H_{\text{sing}}^i(X_n)$. \par

The equivariant situation is analogous: The equivariant cofiber sequence $X_{n-1}\inj X_n\to \S pq$ extends to a Puppe sequence
\[\dots\to\Sigma^{-1,0}\S pq\to X_{n-1}\inj X_n\to \S pq\to\Sigma^{1,0}X_{n-1}\]
yielding a long exact sequence of $\Mt$-module maps in cohomology, including a \textbf{differential} $d:\H\bullet\bullet(X_{n-1})\to \H\bullet\bullet(\Sigma^{-1,0}\S pq)=\H{\bullet+1}\bullet(\S pq)$. It turns out that certain zero differentials in the non-equivariant theory are actually the ``shadows'' of something more interesting in the equivariant theory.\par

Consider $\Gr_1(\R^{3,1})$, whose underlying space is $\Gr_1(\R^3)=\RP^2$. We can build the space from representation cells in two ways (See Figure \ref{fig:twoways}). First, we can begin with a point, attach a non-trivial line segment $e^{1,1}\iso \R^{1,1}$ (thus building $\S11$) and finally attach $e^{2,1}\iso\R^{2,1}$ via a degree-two map from its boundary $\partial\D21=\S11$. Alternatively, we can build this space by attaching a fixed interval $e^{1,0}$ to a point, and then attaching $e^{2,2}$ to this fixed circle with a degree-two map.\par

\begin{figure}[h]
	\begin{tikzpicture}
		\draw[fill=lightgray, opacity=0.5] (1,0) arc (0:360:1);
		\draw (.93, .34) arc (20:90:1);
		\draw[->] (0,1) arc (90:200:1);
		\draw (-.93, -.34) arc (200:270:1);
		\draw[->] (0,-1) arc (270:380:1);
		\draw [dashed] (0,1.5) -- (0,1);
		\draw [dashed] (0,-1.5) -- (0,-1);
		\draw [<->] (-.4,1.35)--(.4,1.35);
		\draw [<->] (-.4,-1.35)--(.4,-1.35);
		\color{red}
		\draw [thick] (0,1) -- (0,-1);
		\fill (1, 0) circle[radius=0.07cm];
		\fill (-1, 0) circle[radius=0.07cm];
		\color{black}
		\draw (0,-2.5) node {
		\xymatrix{\ast=X_0\ar[r]& X_1\ar[r]\ar[d]& X_2\ar[d]\\ &\S11&\S21}
		};
	\end{tikzpicture}
	\qquad\qquad
	\begin{tikzpicture}
		\draw[fill=lightgray, opacity=0.5] (1,0) arc (0:360:1);
		\draw[->] (0,.5) arc (90:260:.5);
		\draw[->] (0,-.5) arc (270:360+80:.5);
		\color{red}
		\draw[thick] (1, 0) arc (0:90:1);
		\draw[->,thick] (0,1) arc (90:180:1);
		\draw[thick] (-1,0) arc (180:270:1);
		\draw[thick, ->] (0,-1) arc (270:360:1);
		\fill (0, 0) circle[radius=0.07cm];
		\draw (0,-1.5) node{};
		\draw (0,1.5) node{};
		\color{black}
		\draw (0,-2.5) node {
		\xymatrix{\ast=X_0\ar[r]& X_1\ar[r]\ar[d]& X_2\ar[d]\\ &\S10&\S22}
		};
	\end{tikzpicture}\\
	\caption{Fixed points in thick red. Note (taking identifications into account) the fixed circle and (single) isolated fixed point appearing in each diagram. Below the two constructions of $\Gr_1(\R^{3,1})$ are their filtration quotients.}
	\label{fig:twoways}
\end{figure}

In the first construction, the cofiber sequence for including the one-skeleton is $\S11\inj\Gr_1(\R^{3,1})\to \S21$. The differential  $d:\rH\bullet\bullet(\S11)\to\rH{\bullet+1}\bullet(\S21)\iso\rH\bullet\bullet\S11$ (depicted on the left of Figure \ref{fig:twodiffs}) must be zero, otherwise the forgetful map would predict a nonzero map $\psi(d)$ in the non-equivariant cellular chain complex. Thus we know relatively easily that 
	$$\H\bullet\bullet(\Gr_1(\R^{3,1}))=\Mt\oplus\M11\oplus \M21.$$
	\begin{figure}[h]
	\begin{tikzpicture}[scale=0.55]
			\axisname{}
			\HMtwo{0}{0}
			\HMtwo{1.1}{1}
			\draw[->] (1.3,1)--(1.8,1) node[below] {$d=0$};
			\color{blue}
			\HMtwo{2}{1}
	\end{tikzpicture}
	\begin{tikzpicture}[scale=0.55]
			\axisname{}
			\HMtwo{0}{0}
			\HMtwo{1}{0}
			\draw[->] (1.2,0)--(1.8,0);
			\draw (2.3,-0.1) node[above] {$d\ne 0$};
			\color{blue}
			\HMtwo{2.1}{2}
	\end{tikzpicture}
	\caption{Differentials from attaching 2-cells.}
	\label{fig:twodiffs}
	\end{figure}
	However in the second construction for the same space, we have the cofiber sequence $\S10\inj\Gr_1(\R^{3,1})\to\S22$. In this case the differential
	\[d:\H\bullet\bullet\S10=\M10\to\H\bullet\bullet\S22=\M22\]
	\emph{cannot} be zero, or we would have two conflicting answers. Rather, $d(1_{\M10})=\theta1_{\M22}$, and we have an extension problem with $\ker(d)$ and $\cok(d)$. While we already know the answer in this case, this problem is resolved generally by \cite{kronholm} and \cite{buddies}. Heuristically, the differential into the lower cone causes $\M10$ to `shift up' to become a $\M11$ while $\M22$ `shifts down' to a $\M21$, replicating the cohomology we expect from the first construction.\par
	
\subsection{Note}
		This phenomenon of nonzero differentials into a lower cone causing shifted weights in the free $\Mt$ generators is called a \textbf{Kronholm shift}. In its simplest version, where just one $\Mt$ maps into the lower cone of another, the source $\Mt$ shifts up by the difference in fixed set dimension of the two free generators, and the target $\Mt$ shifts down by the same amount. A more general formula for shifts when an arbitrary number of $\Mt$s have nonzero-differentials to a lower cone appears in \cite{buddies}.\par

	This trick of deducing properties of unknown differentials in one representation-cell construction (see Definition \ref{def:repcell}) by leveraging what is known about another construction continues to be a useful strategy as we move to larger Grassmannians.

\subsection{Schubert cells}
\label{we2}

Non-equivariantly, $\Gr_k(\R^n)$ can be given a cell structure indexed by Young diagrams fitting inside a $k$-by-$(n-k)$ rectangle. See e.g. \cite{manivel}. For example, $\Gr_2(\R^5)$ can be built with cells indexed by diagrams fitting into {\tiny$\yng(3,3)$} as follows:

\begin{figure}[H]
	\begin{tikzpicture}[scale=0.9]
		\draw (2,0) node(z) {$(0,0)$};
		\draw (2,1) node(o) {$(0,1)$};
		\draw (1,2) node(oo) {$(1,1)$};
		\draw (3,2) node(t) {$(0,2)$};
		\draw (2,3) node(ot) {$(1,2)$};
		\draw (4,3) node(th) {$(0,3)$};
		\draw (1,4) node(tt) {$(2,2)$};
		\draw (3,4) node(oth) {$(1,3)$};
		\draw (2,5) node(tth) {$(2,3)$};
		\draw (2,6) node(thth) {$(3,3)$};
		\draw [-, ultra thin] (z) -- (o);
		\draw [-, ultra thin] (o) -- (oo);
		\draw [-, ultra thin] (o) -- (t);
		\draw [-, ultra thin] (oo) -- (ot);
		\draw [-, ultra thin] (t) -- (ot);
		\draw [-, ultra thin] (t) -- (th);
		\draw [-, ultra thin] (ot) -- (tt);
		\draw [-, ultra thin] (ot) -- (oth);
		\draw [-, ultra thin] (th) -- (oth);
		\draw [-, ultra thin] (tt) -- (tth);
		\draw [-, ultra thin] (oth) -- (tth);
		\draw [-, ultra thin] (tth) -- (thth);
	\end{tikzpicture}
	\quad
	\begin{tikzpicture}[scale=0.9]
		\draw (2,0) node(z) {$\emptyset$};
		\draw (2,1) node(o) {\tiny$\yng(1)$};
		\draw (1,2) node(oo) {\tiny$\yng(1,1)$};
		\draw (3,2) node(t) {\tiny$\yng(2)$};
		\draw (2,3) node(ot) {\tiny$\yng(1,2)$};
		\draw (4,3) node(th) {\tiny$\yng(3)$};
		\draw (1,4) node(tt) {\tiny$\yng(2,2)$};
		\draw (3,4) node(oth) {\tiny$\yng(1,3)$};
		\draw (2,5) node(tth) {\tiny$\yng(2,3)$};
		\draw (2,6) node(thth) {\tiny$\yng(3,3)$};
		\draw [-, ultra thin] (z) -- (o);
		\draw [-, ultra thin] (o) -- (oo);
		\draw [-, ultra thin] (o) -- (t);
		\draw [-, ultra thin] (oo) -- (ot);
		\draw [-, ultra thin] (t) -- (ot);
		\draw [-, ultra thin] (t) -- (th);
		\draw [-, ultra thin] (ot) -- (tt);
		\draw [-, ultra thin] (ot) -- (oth);
		\draw [-, ultra thin] (th) -- (oth);
		\draw [-, ultra thin] (tt) -- (tth);
		\draw [-, ultra thin] (oth) -- (tth);
		\draw [-, ultra thin] (tth) -- (thth);
	\end{tikzpicture}
	\qquad
	\begin{tikzpicture}[scale=0.9]
		\draw (2,0) node(z) {$[1,2]$};
		\draw (2,1) node(o) {$[1,3]$};
		\draw (1,2) node(oo) {$[2,3]$};
		\draw (3,2) node(t) {$[1,4]$};
		\draw (2,3) node(ot) {$[2,4]$};
		\draw (4,3) node(th) {$[1,5]$};
		\draw (1,4) node(tt) {$[3,4]$};
		\draw (3,4) node(oth) {$[2,5]$};
		\draw (2,5) node(tth) {$[3,5]$};
		\draw (2,6) node(thth) {$[4,5]$};
		\draw [-, ultra thin] (z) -- (o);
		\draw [-, ultra thin] (o) -- (oo);
		\draw [-, ultra thin] (o) -- (t);
		\draw [-, ultra thin] (oo) -- (ot);
		\draw [-, ultra thin] (t) -- (ot);
		\draw [-, ultra thin] (t) -- (th);
		\draw [-, ultra thin] (ot) -- (tt);
		\draw [-, ultra thin] (ot) -- (oth);
		\draw [-, ultra thin] (th) -- (oth);
		\draw [-, ultra thin] (tt) -- (tth);
		\draw [-, ultra thin] (oth) -- (tth);
		\draw [-, ultra thin] (tth) -- (thth);
	\end{tikzpicture}
	\caption{Partition tuples, Young diagrams, and jump sequences.}
	\label{fig:threeversions}
\end{figure}
To each Young diagram written as a weakly ascending tuple $\lambda$ (for example \scalebox{0.4}{\yng(1,3)} corresponds to $\lambda=(1,3)$) we can write a strictly ascending tuple $\underline{j}=[\lambda_i+i]_i$ called the \textbf{jump sequence}. The diagram \scalebox{0.4}{\yng(1,3)} has jump sequence $[1+1,3+2]$. These are the symbols on the right-hand side of Figure \ref{fig:threeversions}.\par 
These symbols index the cells of the Grassmannian as follows. We can think of a $k$-plane in $\R^n$ as the rowspace of a $k$-by-$n$ matrix, and without loss of generality, this matrix can be written in a \textbf{canonical form} (lower triangular reduced row echelon form) so that each row's last nonzero entry is a 1, which then clears the column below it. Order these rows by the position of their last nonzero entry. For example:
\[\rs\begin{bmatrix}2&-2&10&12&2\\21&3&15&18&3\end{bmatrix}=\rs\begin{bmatrix}3&1&0&0&0\\4&0&5&6&1\end{bmatrix}:=V_{\scalebox{0.5}{\young(3,456)}}.\]
In this way, every point in the Grassmannian can be sorted into a unique family, these families indexed by jump sequences which give the locations of these $1$s in their canonical representations. These families are related. Consider for example the open set containing the four-parameter family of all planes of the form $V_{\scalebox{0.6}{\young(w,xyz)}}$ (abbreviating rowspace with rs)
\[\Omega_{[2,5]}=\Omega_{\scalebox{0.4}{\yng(1,3)}}:=\left\{rs\begin{bmatrix}w&1&0&0&0\\x&0&y&z&1\end{bmatrix} \,\,:\,\,w,x,y,z\in \R\right\}\subset \Gr_2(\R^5).\]

 Since
\[\lim_{c\to\infty}\text{rs}\begin{bmatrix}x&1&0&0&0\\cy&0&cz&c&1\end{bmatrix}=\text{rs}\begin{bmatrix}x&1&0&0&0\\y&0&z&1&0\end{bmatrix}\in\Omega_{\scalebox{.4}{\yng(1,2)}}\]
and
\begin{align*}
	\lim_{c\to\infty}\text{rs}\begin{bmatrix}c&1&0&0&0\\-cx&0&y&z&1\end{bmatrix}
	=\lim_{c\to\infty}\text{rs}&\begin{bmatrix}c&1&0&0&0\\0&x&y&z&1\end{bmatrix}\\
	=\text{rs}&\begin{bmatrix}1&0&0&0&0\\0&x&y&z&1\end{bmatrix}\in\Omega_{\scalebox{.4}{\yng(3)}},
\end{align*}

\noindent
we have that the closure $X_{[2,5]}:=\overline{\Omega_{[2,5]}}$ contains $ \Omega_{[2,4]}$ and also $X_{[2,5]}$ contains $\Omega_{[1,5]}$, or in Young diagrams, $\Omega_{\scalebox{0.4}{\yng(3)}}\subset X_{\scalebox{0.4}{\yng(3)}}\subset X_{\scalebox{0.4}{\yng(1,3)}}$ and $\Omega_{\scalebox{0.4}{\yng(1,2)}}\subset X_{\scalebox{0.4}{\yng(1,2)}}\subset X_{\scalebox{0.4}{\yng(1,3)}}$. The sets $\Omega_{\underline{j}}$ indexed by jump sequences (or equivalently by Young diagrams) are called \textbf{Schubert cells}, and their closures \textbf{Schubert varieties}. We have an obvious notion of containment for Young diagrams, to which corresponds a notion of dominance in jump sequences. We say that a jump sequence \underline{$j$} \textbf{dominates} another jump sequence \underline{$k$}, denoted $\underline{k}\prec \underline{j}$, if each $k_i\le j_i$. Containment between Schubert varieties corresponds to containment between their indexing Young diagrams or equivalently, to dominating jump sequences. For further details, see Section 3.2 of \cite{manivel}.

\begin{figure}[h!]
	\begin{tikzpicture}[scale=1.1]
		\draw (3,0) node(z) {$\left\{\text{rs}\left[{1\atop 0}\,{0\atop 1}\,{0\atop 0}\,{0\atop 0}\,{0\atop 0}\right]\right\}$};
		\draw (3,1) node(o) {$\left\{\text{rs}\left[{1\atop 0}\,{0\atop \ast}\,{0\atop 1}\,{0\atop 0}\,{0\atop 0}\right]\right\}$};
		\draw (1.5,2) node(oo) {$\left\{\text{rs}\left[{\ast\atop \ast}\,{1\atop 0}\,{0\atop 1}\,{0\atop 0}\,{0\atop 0}\right]\right\}$};
		\draw (4.5,2) node(t) {$\left\{\text{rs}\left[{1\atop 0}\,{0\atop \ast}\,{0\atop \ast}\,{0\atop 1}\,{0\atop 0}\right]\right\}$};
		\draw (3,3) node(ot) {$\left\{\text{rs}\left[{\ast\atop \ast}\,{1\atop 0}\,{0\atop \ast}\,{0\atop 1}\,{0\atop 0}\right]\right\}$};
		\draw (6,3) node(th) {$\left\{\text{rs}\left[{1\atop 0}\,{0\atop \ast}\,{0\atop \ast}\,{0\atop \ast}\,{0\atop 1}\right]\right\}$};
		\draw (1.5,4) node(tt) {$\left\{\text{rs}\left[{\ast\atop \ast}\,{\ast\atop \ast}\,{1\atop 0}\,{0\atop 1}\,{0\atop 0}\right]\right\}$};
		\draw (4.5,4) node(oth) {$\left\{\text{rs}\left[{\ast\atop \ast}\,{1\atop 0}\,{0\atop \ast}\,{0\atop \ast}\,{0\atop 1}\right]\right\}$};
		\draw (3,5) node(tth) {$\left\{\text{rs}\left[{\ast\atop \ast}\,{\ast\atop \ast}\,{1\atop 0}\,{0\atop \ast}\,{0\atop 1}\right]\right\}$};
		\draw (3,6) node(thth) {$\left\{\text{rs}\left[{\ast\atop \ast}\,{\ast\atop \ast}\,{\ast\atop \ast}\,{1\atop 0}\,{0\atop 1}\right]\right\}$};
		\draw [-, ultra thin] (z) -- (o);
		\draw [-, ultra thin] (o) -- (oo);
		\draw [-, ultra thin] (o) -- (t);
		\draw [-, ultra thin] (oo) -- (ot);
		\draw [-, ultra thin] (t) -- (ot);
		\draw [-, ultra thin] (t) -- (th);
		\draw [-, ultra thin] (ot) -- (tt);
		\draw [-, ultra thin] (ot) -- (oth);
		\draw [-, ultra thin] (th) -- (oth);
		\draw [-, ultra thin] (tt) -- (tth);
		\draw [-, ultra thin] (oth) -- (tth);
		\draw [-, ultra thin] (tth) -- (thth);
	\end{tikzpicture}
	\begin{tikzpicture}[scale=1.1]
		\draw (2,0) node(z) {$\Omega_{[1,2]}=\pt$};
		\draw (2,1) node(o) {$\Omega_{[1,3]}$};
		\draw (1,2) node(oo) {$\Omega_{[2,3]}$};
		\draw (3,2) node(t) {$\Omega_{[1,4]}$};
		\draw (2,3) node(ot) {$\Omega_{[2,4]}$};
		\draw (4,3) node(th) {$\Omega_{[1,5]}$};
		\draw (1,4) node(tt) {$\Omega_{[3,4]}$};
		\draw (3,4) node(oth) {$\Omega_{[2,5]}$};
		\draw (2,5) node(tth) {$\Omega_{[3,5]}$};
		\draw (2,6) node(thth) {$\Omega_{[4,5]}$};
		\draw [-, ultra thin] (z) -- (o);
		\draw [-, ultra thin] (o) -- (oo);
		\draw [-, ultra thin] (o) -- (t);
		\draw [-, ultra thin] (oo) -- (ot);
		\draw [-, ultra thin] (t) -- (ot);
		\draw [-, ultra thin] (t) -- (th);
		\draw [-, ultra thin] (ot) -- (tt);
		\draw [-, ultra thin] (ot) -- (oth);
		\draw [-, ultra thin] (th) -- (oth);
		\draw [-, ultra thin] (tt) -- (tth);
		\draw [-, ultra thin] (oth) -- (tth);
		\draw [-, ultra thin] (tth) -- (thth);
	\end{tikzpicture}
	\caption{Free variables denoted by $\ast$.}
\end{figure}

In this way, Young diagrams index a CW structure for the Grassmannian, each $\scalebox{.5}{\yng(1)}$ in a diagram corresponding to a degree of freedom, and hence the number of boxes equals the dimension of the cell attached at that stage of the construction. (For example, $\Omega_{[2,5]}=\Omega_{\scalebox{0.3}{\yng(1,3)}}\iso  e^4$.)

\newpage

It is an important classical fact that if we work over $\Zt$, the attaching maps given by this CW construction yield only zero differentials in the chain complex, and so (for example) we have singular cohomology
\[H^i_{\text{sing}}(\Gr_2(\R^5);\Zt)=\begin{cases}
	\Zt=\langle[\scalebox{.3}{\yng(3,3)}]\rangle & i=6\\
	\Zt=\langle[\scalebox{.3}{\yng(2,3)}]\rangle & i=5\\
	(\Zt)^2=\langle[\scalebox{.3}{\yng(2,2)}],[\scalebox{.3}{\yng(1,3)}]\rangle & i=4\\
	(\Zt)^2=\langle[\scalebox{.3}{\yng(1,2)}],[\scalebox{.3}{\yng(3)}]\rangle & i=3\\
	(\Zt)^2=\langle[\scalebox{.3}{\yng(1,1)}],[\scalebox{.3}{\yng(2)}]\rangle & i=2\\
	\Zt=\langle[\scalebox{.3}{\yng(1)}]\rangle & i=1\\
	\Zt=\langle[\ast]\rangle & i=0.
\end{cases}
\]
In this notation, the cocycle $\scalebox{.3}{\yng(1,2)}$ is the Kronecker dual to $\Omega_{\scalebox{.3}{\yng(1,2)}}$, that is, it evaluates to 1 on $\Omega_{\scalebox{.3}{\yng(1,2)}}$ and zero on other cells. More generally, cohomology elements are denoted by the Young diagrams of the Schubert cells to which they are dual. This preserves the at-a-glance dimension property.

There is also an equivariant version of this story, which we explain next.

\subsection{Worked Example II}

 Suppose we are interested in $\Gr_2(\R^{5,2})$. If we interpret this as $\Gr_2(\R\triv\oplus\R\sgn\oplus\R\triv\oplus\R\sgn\oplus\R\triv)$ or $\Gr_2(\R^{+-+-+})$ for short, then $\Omega_{[2,5]}$ can be seen to be a representation cell. The action of $\Ct$ on this 4-cell, as in the seventh chapter of \cite{FL}, is given by \label{equivschub}
\[\rs\left[{w\atop x}\,{1\atop 0}\,{0\atop y}\,{0\atop z}\,{0\atop 1}\right]\mapsto
\rs\left[{w\atop x}\,{-1\atop 0}\,{0\atop y}\,{0\atop -z}\,{0\atop 1}\right]
=\rs\left[{-w\atop x}\,{1\atop 0}\,{0\atop y}\,{0\atop -z}\,{0\atop 1}\right]
\]
and so $\Omega_{[2,5]}(\R^{+-+-+})\iso e^{4,2}$ is a representation cell. It is pleasant to write this ${\scriptsize\young(-,++-)}$, as we can see topological dimension and weight at a glance from the number of boxes and minus signs, respectively. Analogous considerations now give a representation cell construction for the space.

\begin{figure}[h!]
	\begin{tikzpicture}[scale=0.8]
		\draw (2,0) node(z) {$\emptyset$};
		\draw (2,1) node(o) {\scalebox{0.7}{$\young(-)$}};
		\draw (1,2) node(oo) {\scalebox{0.7}{$\young(-,+)$}};
		\draw (3,2) node(t) {\scalebox{0.7}{$\young(+-)$}};
		\draw (2,3) node(ot) {\scalebox{0.7}{$\young(-,--)$}};
		\draw (4,3) node(th) {\scalebox{0.7}{$\young(-+-)$}};
		\draw (0.8,4.2) node(tt) {\scalebox{0.7}{$\young(+-,-+)$}};
		\draw (3.2,4.2) node(oth) {\scalebox{0.7}{$\young(-,++-)$}};
		\draw (2,5.4) node(tth) {\scalebox{0.7}{$\young(+-,-+-)$}};
		\draw (2,6.6) node(thth) {\scalebox{0.7}{$\young(-+-,+-+)$}};
		\draw [-, ultra thin] (z) -- (o);
		\draw [-, ultra thin] (o) -- (oo);
		\draw [-, ultra thin] (o) -- (t);
		\draw [-, ultra thin] (oo) -- (ot);
		\draw [-, ultra thin] (t) -- (ot);
		\draw [-, ultra thin] (t) -- (th);
		\draw [-, ultra thin] (ot) -- (tt);
		\draw [-, ultra thin] (ot) -- (oth);
		\draw [-, ultra thin] (th) -- (oth);
		\draw [-, ultra thin] (tt) -- (tth);
		\draw [-, ultra thin] (oth) -- (tth);
		\draw [-, ultra thin] (tth) -- (thth);
	\end{tikzpicture}
	\caption{One representation-cell structure for $\Gr_2(\R^{5,2})$, produced by the choice $\R^{5,2}\iso \R^{+-+-+}$.}
\end{figure}

Once an ordered decomposition of the representation as a direct sum of irreducibles is chosen, the process of assigning weights to Schubert cells can easily be automated. Essentially, to find the weight of a cell, one needs to count the free variables in the associated matrix which are inverted by the group action, once the matrix is returned to canonical form. This amounts to counting the minus signs in a matrix like the third one appearing in Figure \ref{fig:weightcompute}'s example. 

\begin{figure}[h]
	\[
	\begin{matrix}
		\begin{matrix}
			\begin{matrix}
			\quad+\,\,\,-\,\,+\,\,\,-\,\,+\,\,\,-\,\,-\,\,\,+
			\end{matrix}\\
			\text{rs}\left[\begin{matrix}
					\ast&\ast&1&0&0&0&0&0\\
					\ast&\ast&0&1&0&0&0&0\\
					\ast&\ast&0&0&\ast&\ast&1&0\\
					\ast&\ast&0&0&\ast&\ast&0&1\\
			\end{matrix}\right]
		\end{matrix}&
		\mapsto\,\,
		\text{rs}\begin{matrix}
			\begin{matrix}
			+\,&-\,&+\,&-\,&+\,&-\,&-\,&+
			\end{matrix}\\
			\left[\begin{matrix}
					\ast&-\ast&1&0&0&0&0&0\\
					\ast&-\ast&0&-1&0&0&0&0\\
					\ast&-\ast&0&0&\ast&-\ast&-1&0\\
					\ast&-\ast&0&0&\ast&-\ast&0&1\\
			\end{matrix}\right]
		\end{matrix}\\
		&=\text{rs}
		\begin{matrix}
			\begin{matrix}
			\phantom{.}\\
			\end{matrix}\\
			\left[\begin{matrix}
					\ast&-\ast&1&0&0&0&0&0\\
					-\ast&\ast&0&1&0&0&0&0\\
					-\ast&\ast&0&0&-\ast&\ast&1&0\\
					\ast&-\ast&0&0&\ast&-\ast&0&1\\
			\end{matrix}\right]
		\end{matrix}
	\end{matrix}
	\]
	\caption{The number of minus signs in the canonical matrix gives the weight of the Schubert cell with jump sequence $[3,4,7,8]$ in the construction associated to $\R^{+-+-+--+}$.}
	\label{fig:weightcompute}
\end{figure}

While it is preferable to automate this computation, a formula for counting these minus signs can be given for the ordered decomposition $\R^{s(1)}\oplus\R^{s(2)}\oplus\dots\oplus \R^{s(n)}$ with $s:[1,n]\to\{+,-\}$ by letting $\lambda$ have jump sequence $\underline{j}$ and using the reverse Kronecker delta $\widehat \delta_{i,j}=1-\delta_{i,j}$,

\[w\left(\Omega_\lambda(\R^{s(1)s(2)\dots s(n)})\right)=\sum_{k\in \underline{j}}\sum_{{i<k}\atop{i\not\in \underline{j}}} \widehat\delta_{s(k),s(i)}.\]

It is important that a different ordered decomposition of the underlying representation can create a very different equivariant Schubert cell construction. For example, while $\Omega_{[2,5]}(\R^{+-+-+})\iso e^{4,2}$, the decomposition $\R^{5,2}=\R^{+-++-}$ gives $\Omega_{[2,5]}(\R^{+-++-})\iso e^{4,4}$, an ingredient for building the same space $\Gr_2(\R^{5,2})$ which does not appear in the $\R^{+-+-+}$ construction. 

A representation-cell structure for a space allows for a \textbf{one-cell-at-at-time filtration}, such that each subsequent inclusion cofiber is a representation sphere:

\[\xymatrix{
	X_0\ar[r] & X_1\ar[r]\ar[d] & X_2\ar[r]\ar[d] & \dots\ar[r]&X_i\ar[r]\ar[d]&\dots\\
    &S^{p_1,q_1}&S^{p_2,q_2}&\dots&S^{p_i,q_i}&\dots
}\]

This gives rise to the \textbf{one-cell-at-a-time equivariant cellular spectral sequence} for a Grassmannian, which we will discuss further in the next section. To a given choice of decomposition for the underlying representation space, we get a spectral sequence having for its $E_1$ page a free $\Mt$-module with basis elements corresponding to the bidegrees $(p_i,q_i)$ of these Schubert cells. We will refer to this data as a \textbf{table of ingredients} where each Young diagram or jump sequence represents the generator for an $\Mt$ in that bidegree. Denote the ingredient table of a certain decomposition $\bigoplus \R^{\pm}$ by $I(\pm\dots\pm)$. Figure \ref{fig:252ingreds pmpmp} has two depictions of $I(\R^{+-+-+})=I(+-+-+)$.

\begin{figure}[h]
	\scalebox{0.9}{\begin{tabular}{||l|p{10pt}|p{10pt}|p{15pt}|p{15pt}|p{15pt}|p{10pt}}
	\hspace{-.26in}3&&&\scalebox{.5}{\young(-,--)}     &&\scalebox{.5}{\young(+-,-+-)}     &\scalebox{.5}{\young(-+-,+-+)}\\
	\hline
	\hspace{-.26in}2&&&\scalebox{.5}{\young(-+-)}\,     &\scalebox{.5}{\young(-,++-)}&&\\
	&&&&      \scalebox{.5}{\young(+-,-+)}&&\\
	\hline
	\hspace{-.26in}1&\scalebox{.5}{\young(+)}      &\scalebox{.5}{\young(+-)}&&&&\\
	&&  \scalebox{.5}{\young(-,+)}&&&&\\
	\hline
	$\emptyset$\hspace{-.34in}0\hspace{.4in}\,&&&&&&\\
	\hline
	\hline
	\end{tabular}}
	\quad or \qquad
	\scalebox{0.7}[.7]{\begin{tabular}{||l|l|l|l|l|l|l}
		\hspace{-.26in}&&&[2,4]&&[3,5]&[4,5]\\
		\hline
		\hspace{-.26in}&&&[1,5]&[2,5]&&\\
		\hspace{-.26in}&&&&[3,4]&&\\
		\hline
		\hspace{-.26in}&[1,3]&[1,4]&&&&\\
		\hspace{-.26in}&&[2,3]&&&&\\
		\hline
		[1,2]\hspace{-.44in}\hspace{.4in}\,&&&&&&\\
		\hline
		\hline
	\end{tabular}}\\
	\scalebox{0.9}{\begin{tabular}{p{10pt}p{10pt}p{10pt}p{10pt}p{15pt}p{15pt}p{15pt}p{10pt}}
		&0&1&2&3&4&5&6
	\end{tabular}}
	\hspace{.9in}
	\begin{tabular}{p{11pt}p{11pt}p{11pt}p{11pt}p{11pt}p{11pt}}
		&&&&&
	\end{tabular}\\
	Young diagrams\qquad\qquad\qquad\qquad\qquad Jump sequences
	\caption{Ingredients table $I(+-+-+)$ for $\Gr_2(\R^{5,2})$.}
	\label{fig:252ingreds pmpmp}
\end{figure}

While the ingredients table is the first page of a spectral sequence, we will often make use of this data in another way. If we consider attaching these equivariant cells successively by increasing dimension and then weight, we can compute the cohomology of filtered subspaces one at a time. That is, rather than running a spectral sequence, we will repeatedly consider the long exact sequence corresponding to iteratively building subspaces $X_{k+1}$ from $X_k$ by attaching one equivariant cell $e^{p,q}$, giving the cofiber sequence
\[X_{k}\inj X_{k+1}\to \S pq.\] 

Because the differentials $d:H_{\text{sing}}^\bullet (X_k)\to H_{\text{sing}}^{\bullet+1}(S^p)$ in the non-equivariant chain complex are all zero, we know that none of the equivariant differentials may send a free generator to another free generator, as the forgetful map induces a natural map between the equivariant and non-equivariant long exact sequences for each attachment. Also because we are attaching cells by increasing weight, any differential carrying a generator into the top cone would hit $\tau^j$ times some other generator, which would again imply an isomorphism non-equivariantly.  Because the differentials in a Schubert cell complex for a Grassmannian must have zero differentials as their non-equivariant ``shadows,'' we need only worry about nonzero differentials into the lower cones of suspensions of $\Mt$, which, if they occur, cause Kronholm shifts. \par

We return to $\Gr_2(\R^{5,2})$, again recalling that rather than running a spectral sequence, we are simply computing the cohomology of subspaces as we attach cells one at at time. From Figure \ref{fig:252ingreds pmpmp} we can see that as we attach the first few cells, no differentials are possible, and so the cohomologies of early subspaces are obvious. But when the cell $\Omega_{\scalebox{0.4}{\yng(1,2)}}$ is attached, a differential between $\langle[\scalebox{.3}{\yng(1,1)}],[\scalebox{.3}{\yng(2)}]\rangle$ and $\langle\theta[\scalebox{.3}{\yng(1,2)}]\rangle$ could be either zero or nonzero without contradicting what is known non-equivariantly. However we can resolve this ambiguity by making use of another ordered decomposition of $\R^{5,2}$, the ambient space for our 2-planes. For example we have an equivariant homeomorphism $\Gr_2(\R^{+-+-+})\iso \Gr_2(\R^{-++-+})$, induced by the linear map $(x_1,x_2,x_3,x_4,x_5)\mapsto (x_2,x_1,x_3,x_4,x_5)$ on the underlying representation. This second construction for the space has ingredients table $I(\R^{-++-+})$, as shown in Figure \ref{fig:252ingreds mppmp}. Again, we can represent cells using either Young diagrams or jump sequences.
\begin{figure}
	\scalebox{0.9}{\begin{tabular}{||l|p{10pt}|p{10pt}|p{15pt}|p{15pt}|p{15pt}|p{10pt}}
		\hspace{-.26in}3&&&&\scalebox{0.5}{\young(-,-+-)}&\scalebox{0.5}{\young(-+,-+-)}&\scalebox{0.5}{\young(+--,-++)}\\
		\hline
		\hspace{-.26in}2&&\scalebox{0.5}{\young(--)}&\scalebox{0.5}{\young(-,+-)}&\scalebox{0.5}{\young(-+,+-)}&&\\
		 && \scalebox{0.5}{\young(-,-)}& & &&\\
		\hline
		\hspace{-.26in}1&&&\scalebox{0.5}{\young(++-)}&&&\\
		\hline
		$\emptyset$\hspace{-.33in}0\hspace{.29in}\,&\scalebox{0.5}{\young(+)}&&&&&\\
		\hline
		\hline
	\end{tabular}}
	\qquad
	\scalebox{0.7}[.7]{\begin{tabular}{||l|l|l|l|l|l|l}
		\hspace{-.26in}&&&&[2,5]&[3,5]&[4,5]\\
		\hline
		&&[1,4]&[2,4]&[3,4]&&\\
		\hspace{-.26in}&&[2,3]&&&&\\
		\hline
		\hspace{-.26in}&&&[1,5]&&&\\
		\hline
		[1,2]\hspace{-.44in}\hspace{.4in}\,&[1,3]&&&&&\\
		\hline
		\hline
	\end{tabular}}\\
	\,\,
	\scalebox{0.9}{\begin{tabular}{p{10pt}p{10pt}p{10pt}p{10pt}p{15pt}p{15pt}p{15pt}p{10pt}}
		&0&1&2&3&4&5&6
	\end{tabular}}
	\,\,\,\,
	\begin{tabular}{p{11pt}p{14pt}p{14pt}p{14pt}p{14pt}p{14pt}p{1pt}p{1pt}}
		&&&&&&&
	\end{tabular}
	\caption{Ingredients table $I(-++-+)$ for $\Gr_2(\R^{5,2})$.}
	\label{fig:252ingreds mppmp}
\end{figure}

Since after iteratively attaching these ingredients, we must arrive at the same cohomology, it is now clear that in this second scenario, $[1,3]$ must ``shift up'' by hitting some nonzero combination of $\theta[1,4]$ and $\theta[2,3]$, after which no other differential can interact with the bidegree $(2,1)$, recalling that isomorphisms are precluded by our knowledge of the non-equivariant cochain complex. Thus in the first construction, $d:\langle [1,4],[2,3]\rangle \to \langle\theta[2,4]\rangle$ must be nonzero, so that both $\H21$ and $\H22$ of $\Gr_2(\R^{+-+-+})$ contain generators. As no other differentials are possible in the $+-+-+$ construction, we now know that
\begin{center}
	\begin{tikzpicture}[scale=.8]
		\def\pmax{ 6 }
		\def\qmax{ 3 }
		\draw (-3,1.5) node {$H^{\bullet,\bullet}(\Gr_2(\R^{5,2}))=$};
		\def\points{(0.1, 0.1), (1.1, 1.1), (2.1, 1.1), (2.1, 2.1), (3.2, 2.1), (3.1, 2.2), (4.2, 2.1), (4.1, 2.2), (5.1, 3.1), (6.1, 3.1)}
		\draw[step=1cm,lightgray,very thin] (0,0) grid (\pmax+0.5,\qmax+.5);
		\draw[thick,->] (0,-0.5) -- (0,.5+\qmax);
		\draw[thick,->] (-0.5,0) -- (.5+\pmax,0);
		\foreach \x in {1,...,\pmax}
			\draw (\x cm,1pt) -- (\x cm,-1pt) node[anchor=north] {$\x$};
		\foreach \y in {1,...,\qmax}
			\draw (1pt,\y cm) -- (-1pt,\y cm) node[anchor=east] {$\y$};
		\foreach \pq in \points
			\draw \pq [fill] circle (.04);
	\end{tikzpicture}.\label{example}
\end{center}
(It is instructive to check what this must mean about the other differentials in the $-++-+$ construction.\footnote{We must have $[1,3]\mapsto\theta[1,4]$ and $[1,5]\mapsto\theta[2,5]$.})

This procedure of playing the many different constructions for a Grassmannian off of one another can be automated to get a fund of examples. The theorems and algorithm necessary for this will be described in a forthcoming paper. But in many cases, we can do even better -- see Sections \ref{knone} and \ref{twontwo}.

\section{Jack-O-Lantern Modules}
\label{jolsection}
Rather than considering successive long exact sequences as in Section \ref{we1}, we could have used a cellular spectral sequence made by sewing together the long exact sequences for each cofiber sequence in the filtration
	\[\xymatrix{
	\pt\ar[r]& X_1\ar[r]\ar[d]& X_2\ar[d]\\
	 &{\color{blue}\S10}&{\color{ggreen}\S22}
	}.\]
More generally, when a space $X$ is built one-cell-at-a-time, so that the cofiber of each subspace inclusion is a single representation sphere,
\[\xymatrix{
\pt\ar[r]&X_1\ar[r]\ar[d]&X_2\ar[r]\ar[d]&X_3\ar[r]\ar[d]&\dots\\
&\S{p_1}{q_1}&\S{p_2}{q_2}&\S{p_3}{q_3}&\dots
}\]
we can make a spectral sequence where each filtration degree contains a single suspended $\Mt$. This spectral sequence is, alarmingly, trigraded, but if we attach cells in lexicographic order, we can suppress the filtration degree without losing too much information. Letting $r$ denote filtration degree, we will have differentials $$d_k:E^{p,q,r}_k\to E^{p+1,q,r+k}_k.$$ Figure \ref{fig:spectraljack} depicts this approach for one of the constructions in Section \ref{we1}.

\begin{figure}[h]
	\begin{center}
		\begin{tikzpicture}[scale=0.55]
				\axisname{$E_1$}
				\draw[->] (1.2,0)-- node[above] {$d_1$}(1.8,0);
				\color{blue}
				\HMtwo{1}{0}
				\color{ggreen}
				\draw[->] (2,2.2)--(2,4.5);
				\draw[->] (2,2.2)--(2+2.3,4.5);
				\draw[->] (2,0)--(2,-4.5);
				\draw[->] (2,0)--(2-4.5,-4.5);
		\end{tikzpicture}
		\begin{tikzpicture}[scale=0.55]
				\axisname{$E_2=E_\infty$}
				\color{blue}
				\draw[->] (1,1)--(1,4.5);
				\draw[-] (1,1)--(2,2);
				\draw[-] (2,2)--(2,1);
				\draw[->] (2,1)--(2+2.5,1+2.5);
				\draw[->] (1,-2.1)--(1,-4.5);
				\draw[->] (1,-2.1)--(1-2.4,-4.5);
				\color{ggreen}
				\draw[->] (2,2.1)--(2,4.5);
				\draw[->] (2,2.1)--(2.1+2.3,4.5);
				\draw[->] (1,-1)--(1-3.5,-4.5);
				\draw[-] (1,-1)--(1,-2);
				\draw[-] (1,-2)--(2,-1);
				\draw[->] (2,-1)--(2,-4.5);
		\end{tikzpicture}
	\end{center}
	\caption{Jack-O-lantern modules in a spectral sequence.}
	\label{fig:spectraljack}
\end{figure}

While we already know that the reduced cohomology of the space $\Gr_1(\R^{3,1})$ from Section \ref{we1} is the free module $\M11\oplus \M21$, we see that $E_\infty$ is not itself free. Rather, it is an associated graded of this free module. Loosely, the summands of $E_\infty$ are copies of $\Mt$ with pieces cut out of them. This phenomenon motivates the following definition.

\begin{defn}
	\label{joldef}
	Beginning with some suspension $\Sigma^{p,q}\Mt=\Mt\langle a\rangle$ of $\Mt$, let $S$ be a finite set of homogeneous elements of the lower cone, and consider the quotient $\sfrac{\Mt\langle a \rangle}{S\Mt\langle a \rangle}$. Let $J$ be a submodule of this quotient generated by a finite collection of homogeneous elements of the upper cone, and let these generators include elements of the form $[\rho^M a]$ and $[\tau^Na]$ for some $M$ and $N$. A \textbf{Jack-o-lantern} module is an $\Mt$ module isomorphic to such a module $J$. (See Figure \ref{fig:orange} for an example.)\par
	We can decompose a \jol module as $J=J^+\sqcup J^-$ where the module structure connects $J^+$ to $J^-$. These two parts are
	\begin{itemize}
		\item An ideal $J^+$ of the upper cone of $\Mt\langle a\rangle$ such that for large enough $N$ and $M$, both $[\rho^Ma]\in J^+$ and $[\tau^Na]\in J^+$\\
		and
		\item A ``coideal''  $J^-$ of the lower cone of $\Mt\langle a\rangle$, meaning that if $[\frac\theta{\rho^i\tau^j}a]\in J^-$ then both $[\frac\theta{\rho^{i+1}\tau^j}a]\in J^-$ and $[\frac\theta{\rho^i\tau^{j+1}}]a\in J^-$, such that for large enough $N$ and $M$, both $[\frac\theta{\rho^M}a]\in J^-$ and $[\frac\theta{\tau^N}]a\in J^-$.
	\end{itemize}
	 By ``the module structure connects $J^+$ to $J^-$'' we mean if $[\rho^k\tau^la],[\frac\theta{\rho^i\tau^j}a]\in J$, then $\frac\theta{\rho^{i+k}\tau^{j+l}}\cdot[\rho^k\tau^la]=[\frac\theta{\rho^i\tau^j}a]$.
\end{defn}

Note that $\Mt$ itself is trivially a \jol module. These modules contain elements of the form $[\tau^N a]$ and $[\frac\theta{\tau^N}a]$ sharing a dimension $p$. Likewise there is the largest fixed-set dimension $p-q$ of $J^+$. Together these give $J$ a well-defined \textbf{phantom bidegree}, the pair $(p,q)$ such that $J$ is a subquotient of $\Sigma^{p,q}\Mt$. See Figure \ref{fig:orange}. Note that while we may write elements like $[\tau\rho a]$, in general there my be no element $[a]$, that is, often $J^{p,q}=0$. We will sometimes call $a$ the \textbf{phantom generator}.

\begin{figure}[h]
	\begin{tikzpicture}[scale=0.3]
			\color{gray}
			\draw[dashed] (0,0) -- (0,4);
			\draw[dashed] (0,0) -- (5,5);
			\draw[dashed] (0,-2) -- (-5,-7);
			\draw[dashed] (0,-2) -- (0,-8);
			\color{black}
			\draw[thick,->] (0,4) -- (0,7);
			\draw[thick,->] (5,5) -- (7,7);
			\draw[thick, fill=orange] (0,6.5)--(0,4) -- (1,5)--(1,4)--(2,5)--(2,3)--(5,6)--(5,5)--(6.5,6.5);
			\draw[thick,->] (-5,-7) -- (-9,-11);
			\draw[thick, fill=orange] (-8.5,-10.5)--(-5,-7) -- (-5,-9)-- (-3,-7)-- (-3,-10)-- (-1,-8)-- (-1,-9) -- (0,-8)-- (-0,-10.5);
			\draw[thick,->] (0,-8) -- (-0,-11);
			\draw[->] (-2,0)--(-.2,0);
			\draw (-2,0) node[left] {phantom bidegree};
			\draw (5,5) node {$\bullet$};
			\draw (5,5) node[below right] {$[\rho^5 a]$};
			\draw (0,4) node {$\bullet$};
			\draw (0,4) node[left] {$[\tau^4 a]$};
			\draw (-5,-7) node {$\bullet$};
			\draw (-5,-7) node[above left] {$[\frac\theta{\rho^5} a]$};
			\draw (0,-8) node {$\bullet$};
			\draw (-0,-8) node[right] {$[\frac\theta{\tau^6} a]$};
		\end{tikzpicture}
		\caption{A \jol module. In the language of definition \ref{joldef}, this is $\left(\sfrac{\Mt\langle a \rangle}{S\Mt\langle a \rangle}\right)\langle [\tau^4a],[\rho\tau^3a],[\rho^2\tau a],[\rho^5a]\rangle$ where $S=\{\frac\theta{\rho^4\tau}a,\frac\theta{\rho^2\tau^4}a,\frac\theta{\tau^5}a\}$.}
		\label{fig:orange}
\end{figure}

\begin{defn}
	\label{jolmap}
	A \textbf{\jol map} is an $\Mt$-module map $f:J_1\to J_2$ between \jol modules such that $f(J_1^+)\subseteq J_2^-$ and hence $f(J_1^-)=0$. \par
	Note that the kernel and cokernel of a \jol map are both \jol modules, with $ J_1^+\surj (\ker f)^+$ and $(\ker f)^-\iso J_1^-$ while $(\cok f)^+\iso J_2^+$ and $ J_2^-\surj (\cok f)^-$. See for example Figure \ref{fig:jolkercok}.
\end{defn}

\begin{figure}
	\begin{tikzpicture}[scale=0.5]
		\draw [->] (1.2,1)--node[above right]{$f$}(1.8,1);
		\color{ggreen}
		\draw (0,0) node {$J_1$};
		\draw [<->] (0,4)--(0,1)--(1,2)--(1,1)--(4,4);
		\draw [<->] (-3,-5)--(-1,-3)--(-1,-4)--(0,-3)--(0,-5);
		\color{orange}
		\draw (3,4) node {$J_2$};
		\draw [<->] (3,7)--(3,5)--(4,6)--(4,5)--(6,7);
		\draw [<->] (0,-1)--(2,1)--(2,0)--(3,1)--(3,-1);
	\end{tikzpicture}
	\quad
	\begin{tikzpicture}[scale=0.5]
		\draw [dashed] (0,7)--(0,-5);
	\end{tikzpicture}
	\quad
	\begin{tikzpicture}[scale=0.5]
		\color{ggreen}
		\draw (0,0) node[left] {$\ker(f)$};
		\draw [<->] (0,4)--(0,1)--(2,3)--(2,2)--(4,4);
		\draw [<->] (-3,-5)--(-1,-3)--(-1,-4)--(0,-3)--(0,-5);
		\color{orange}
		\draw (3,4) node[left] {$\cok(f)$};
		\draw [<->] (3,7)--(3,5)--(4,6)--(4,5)--(6,7);
		\draw [<->] (0,-1)--(1,0)--(1,-1)--(3,1)--(3,-1);
	\end{tikzpicture}
	\caption{A \jol map $f$ of degree $(1,0)$. The kernel and cokernel of $f$ are also \jol modules.}
	\label{fig:jolkercok}
\end{figure}

\begin{lem}
	\label{jackslemma}
	Let $C_\bullet=\left[\dots\to J_{i-1}\xrightarrow{d_{i-1}}J_i \xrightarrow{d_i} J_{i+1}\xrightarrow{d_{i+1}} \dots \right]$ be a (co)chain complex of \jol maps. $($Definition \ref{jolmap} implies $d^2=0)$. Then the homology modules $H_i(C_\bullet)$ are \jol modules.
\end{lem}

\begin{proof}
	The kernel and cokernel of a \jol map are both \jol modules. As $J_{i-1}\to \ker(d_i)$ is a \jol map, we have that $H_i(C_\bullet)=\cok(J_{i-1}\to \ker(d_i))$ is a \jol module.
\end{proof}

In the next section we will be interested in maps between \jol modules which are $\Mt$-module maps but not necessarily \jol maps. The following lemma gives a restriction on the topological and fixed-set dimensions of the phantom generators of the domain and codomain of such maps $f:J\to J'$ with $f(J^+) \subseteq (J')^+$.

\begin{lem}
	\label{toptotop}
	Suppose $J$ and $J'$ are \jol modules having phantom generators $\alpha$ and $\alpha'$ of phantom bidegrees $(p,q)$ and $(p',q')$ respectively. If $f:J\to J'$ is a degree-preserving $\Mt$-module map carrying an element of $J^+$ to a nonzero element of $(J')^+$, then $f|_{J^+}$ is injective, $p\ge p'$ and $p-q\le p'-q'$. (Note this implies $q\ge q'$).
\end{lem}

\begin{proof}
	Say $f([\rho^i\tau^j \alpha])= [\rho^{i'}\tau^{j'} {\alpha'}]$ but suppose there exists $[\rho^k\tau^l \alpha]\in J^+$ with $f([\rho^k\tau^l\alpha])=0$. Then
	
	\[\rho^k\tau^l[\rho^{i'}\tau^{j'} {\alpha'}]=\rho^k\tau^lf([\rho^i\tau^j \alpha])=\rho^i\tau^jf([\rho^k\tau^l \alpha])=\rho^i\tau^j\cdot 0=0.\]
	
	This makes $[\rho^{i'}\tau^{j'}\alpha']$ an element of $(J')^+$ with $\Mt^+$ torsion, a contradiction.
	
	Now suppose $p<p'$. There exists some $M$ such that $[\tau^M\alpha]\in J$. This element lies to the left of $(J')^+$ since every element here has topological degree at least $p'$. Thus $f([\tau^M\alpha])=0$, contradicting injectivity.
	
	Similarly, suppose $p-q> p'-q'$ and $[\rho^M\alpha]\in J$. Noting that $[\rho^M\alpha]$ lies on a diagonal below $(J')^+$, we have $f([\rho^M\alpha])=0$, a contradiction.
\end{proof}


In the next section and throughout the paper, we will be examining differentials which are $\Mt$-module maps of degree $(1,0)$, and we will be interested in maps between \jol modules of certain degrees. This motivates the following corollary.

\begin{cor}
	\label{fixedcor}
	Suppose $J$ and $J'$ are \jol modules having phantom generators $\alpha$ and $\alpha'$ of phantom bidegrees $(p,q)$ and $(p',q')$ respectively, with $(p,q)\le (p',q')$ lexicographically. If $d$ is an $\Mt$-module map of degree $(1,0)$ sending an element of $J^+$ to a nonzero element of $(J')^+$, then $p=p'-1$ and for large enough $N$, $d([\tau^N \alpha])=[\tau^{N+q-q'}\alpha']$. 
\end{cor}

\begin{proof}
	Write $d$ as a degree-preserving $\Mt$-module map $d:\Sigma^{1,0}J\to J'$. Note that $p+1\ge p'$ by Lemma \ref{toptotop}, but $p\le p'$ by the lexicographic ordering. So either $p=p'$ or $p=p'-1$. 
	
	Suppose for a contradiction that $p=p'$. Then $q\le q'$ by the lexicographic ordering. But by the lemma, $(p+1)-q\le p'-q'$. So $q\ge q'+1$ and we have our contradiction. Thus $p=p'-1$.
	
	Finally for some $N$ we have $[\tau^N\alpha]\ne 0$, and $d([\tau^N\alpha])=[\tau^{N+q-q'}\alpha']$ by the injectivity given by the lemma.
\end{proof}

If a map from $\Mt$ is zero on the generator, it is identically zero, and in particular, zero on the lower cone. For \jol modules, we have the analogous fact: 

\begin{lem}\label{notopnobot}
	If $f:J\to K$ is an $\Mt$-module map from a \jol module $J$ to any $\Mt$ module $K$, and $f|_{J^+}$ is not injective, then $f(J^-)=0$.
\end{lem}

\begin{proof}
	By assumption, for some $M$ and $N$ we have $[\rho^M\tau^N1_\alpha]\in J^+$ with $f([\rho^M\tau^N1_\alpha])=0$. Then for any element $[\frac\theta{\rho^i\tau^j}1_\alpha]\in J^-$,
	$$f\left(\left[\tfrac\theta{\rho^i\tau^j}1_\alpha\right]\right)
	= f\left(\tfrac\theta{\rho^{M+i}\tau^{N+j}}\cdot [\rho^M\tau^N1_\alpha]\right)
	=\tfrac\theta{\rho^{M+i}\tau^{N+j}} f\left([\rho^M\tau^N1_\alpha]\right)=0.$$
\end{proof}

In the next section, we put these algebraic results to use in cohomology.

\section{A theorem restricting Kronholm shifts}

Because $\Mt$ has nonzero groups in so many bidegrees, when examining spectral sequences or even just long exact sequences of a pair, there are in general a lot of algebraically-possible differentials to consider. The following theorem helps us rule out some of these possibilities. We first need a definition.

\begin{defn}
	Let $\Lambda$ be a set with a partial ordering so that any two elements $\alpha,\beta\in \Lambda$ have a greatest lower bound, denoted $\alpha\cap\beta\in \Lambda$. A \textbf{hierarchical cell structure} on a space $X$ is a CW structure with cells $\{e_\lambda\}_{\lambda\in\Lambda}$ so that each cell $e_\alpha$ has an attaching map whose image lies in $\ds\bigsqcup_{\lambda<\alpha}e_\lambda$. 
\end{defn}
Note that a hierarchical cell structure on a space gives subspaces $X_\alpha$ of the form
$$X_\alpha=\bigsqcup_{\lambda\le \alpha}e_\lambda$$
with the properties
\begin{itemize}
		\item $X_\alpha\cap X_{\beta}=X_{\alpha\cap\beta}$\quad and
        \item $\displaystyle\sfrac{(X_\alpha\cup X_{\alpha'})}{X_{\alpha}\cap X_{\alpha'}}=(\sfrac{X_\alpha}{(X_{\alpha}\cap X_{\alpha'})})\vee (\sfrac{X_{\alpha'}}{(X_{\alpha}\cap X_{\alpha'})}).$
\end{itemize}

\subsection{Example}
	The setting in which we will use this notion is of course the Schubert cell construction of the Grassmannian, which gives it a hierarchical cell structure, by Proposition 3.2.3 of \cite{manivel}. Schubert cells are indexed by Young diagrams, which have a partial ordering under inclusion. In fact, in this setting,
    \[X_\lambda=\bigsqcup_{\lambda'\le\lambda}\Omega_{\lambda'}=\overline{\Omega_\lambda},\]
    the closure of the Schubert cell $\Omega_\lambda$, called the \textbf{Schubert variety}.
    For two diagrams $\lambda_1$ and $\lambda_2$, 
    $$X_{\lambda_1}\cap X_{\lambda_1}=X_{\lambda_1\cap\lambda_2}$$ 
	and 
	$$\sfrac{(X_{\lambda_1}\cup X_{\lambda_2})}{X_{\lambda_1\cap\lambda_2}}=(\sfrac{X_{\lambda_1}}{X_{\lambda_1\cap\lambda_2}})\vee (\sfrac{X_{\lambda_2}}{X_{\lambda_1\cap\lambda_2}})$$
	for example in $\Gr_2\R^5$ (see Figure \ref{fig:threeversions})
	$$X_{\scalebox{0.4}{\yng(1,3)}}\cap X_{\scalebox{0.4}{\yng(2,2)}}=X_{\scalebox{0.3}{\yng(1,3)}\,\cap\,\scalebox{0.3}{\yng(2,2)}}=X_{\scalebox{0.3}{\yng(1,2)}}$$ 
		and 
		$$\sfrac{(X_{\scalebox{0.4}{\yng(1,3)}}\cup X_{\scalebox{0.4}{\yng(2,2)}})}{X_{\scalebox{0.4}{\yng(1,2)}}}
		=(\sfrac{X_{\scalebox{0.4}{\yng(1,3)}}}{X_{\scalebox{0.4}{\yng(1,2)}}})\vee (\sfrac{X_{\scalebox{0.4}{\yng(2,2)}}}{X_{\scalebox{0.4}{\yng(1,2)}}}).$$\\

\subsection{Warning}
To avoid confusion, we call the reader's attention to the fact that there are two distinct orderings in the following theorem -- a partial ordering corresponding to a hierarchical cell structure, and the lexicographic order on bidegrees. 

We now state the theorem we will use in Section \ref{knone} to rule out Kronholm shifts.

\begin{thm}
\label{fixedthm}
	Let $X'$ be an equivariant space with a finite hierarchical representation cell structure. Suppose that $X$ is obtained from a subcomplex $X'\subset X$ by attaching a single representation cell $e_\beta\simeq e^{p,q}$ whose bidegree is lexicographically after that of the cells of $X'$.
Suppose also that the forgetful cochain complex $C^\bullet(\psi(X))$ corresponding to this construction has only zero differentials. If, for every cell $e_\alpha\simeq e^{p',q'}$ used in building $X'$, either
		\begin{enumerate}
			\item[$(i)$] $\alpha\not\le \beta$ \qquad or
			\item[$(ii)$] $p'-q'\le p-q$
		\end{enumerate}
	then the cofiber sequence $X'\inj X\to \S pq$ gives a split short exact sequence in cohomology, i.e.
	\[\H \bullet\bullet (X)=\H\bullet\bullet (X')\oplus \M pq.\]
	$\ $\\
\end{thm}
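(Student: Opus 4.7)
The plan is to show that the connecting differential $d\colon H^{\bullet,\bullet}(X') \to H^{\bullet+1,\bullet}(S^{p,q}) = \Sigma^{p-1,q}\Mt$ of the cofiber long exact sequence vanishes. Once $d = 0$, the short exact sequence splits automatically because $\Sigma^{p,q}\Mt$ is free, and hence projective, over $\Mt$. To leverage the hypotheses I would first reduce to the subcomplex $X_{<\beta} \subset X'$. The hierarchical property exhibits $X$ as a pushout $X' \cup_{X_{<\beta}} X_{\leq\beta}$ with $X/X' \iso X_{\leq\beta}/X_{<\beta} = S^{p,q}$, so naturality of the cofiber long exact sequence along the map of cofibrations $(X_{<\beta}\hookrightarrow X_{\leq\beta}) \to (X'\hookrightarrow X)$ factors $d$ as $d' \circ r$, where $r\colon H^{\bullet,\bullet}(X') \to H^{\bullet,\bullet}(X_{<\beta})$ is restriction and $d'$ is the connecting map of the smaller cofibration. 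Hence it suffices to show $d' = 0$. Every cell $e_\alpha$ of $X_{<\beta}$ satisfies $\alpha < \beta$, so hypothesis (i) fails for it and (ii) must hold: all cells of $X_{<\beta}$ have fixed-set dimension $p_\alpha - q_\alpha \leq p - q$.

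Writing $H^{\bullet,\bullet}(X_{<\beta}) = \bigoplus_\alpha \Sigma^{p_\alpha, q_\alpha'}\Mt$ as a free module (Kronholm's theorem), with topological degrees $p_\alpha$ coinciding with cell dimensions, the image $d'(1_{\Sigma^{p_\alpha,q_\alpha'}\Mt})$ lies at bidegree $(p_\alpha, q_\alpha')$ in $\Sigma^{p-1,q}\Mt$, so it lies in either the top or lower cone. In the top-cone case write $d'(1) = \rho^i\tau^j\cdot 1_{\Sigma^{p-1,q}\Mt}$. If $i = 0$ then $\psi(d'(1)) = \psi(\tau^j)\cdot\psi(1_{\Sigma^{p-1,q}\Mt}) \neq 0$ in singular cohomology, contradicting that the corresponding singular cellular differential vanishes by hypothesis. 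If $i \geq 1$ then matching bidegrees forces $p_\alpha = p - 1 + i \geq p$; but the lex-order of cells forces $p_\alpha \leq p$, and equality would demand $q_\alpha < q$ and hence $p_\alpha - q_\alpha > p - q$, violating (ii). Both subcases are impossible.

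In the lower-cone case, $d'(1) = \tfrac{\theta}{\rho^i\tau^j}\cdot 1_{\Sigma^{p-1,q}\Mt}$ for some $i, j \geq 0$; matching bidegrees yields $p_\alpha - q_\alpha' = p - q + j + 1 \geq p - q + 1$, so the source summand has fixed-set dimension strictly greater than $p - q$. The hard step is ruling this out: one needs to know that no free $\Mt$-generator of $H^{\bullet,\bullet}(X_{<\beta})$ has fixed-set dimension exceeding the maximum cell fixed-set dimension. This follows from the Kronholm shift formula described in Section \ref{we1} and in \cite{buddies}: in the basic case where one summand maps into the lower cone of another the two fixed-set dimensions simply swap, preserving their maximum; the more general redistribution documented in \cite{buddies} when several summands simultaneously map into a single lower cone has the same consequence. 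Combined with (ii) this bounds every free generator's fixed-set dimension by $p - q$, contradicting the $\geq p - q + 1$ derived above. With both cases ruled out, $d' = 0$, hence $d = 0$, so the cofiber sequence splits and $H^{\bullet,\bullet}(X) = H^{\bullet,\bullet}(X') \oplus \Sigma^{p,q}\Mt$.
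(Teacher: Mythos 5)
Your argument is a genuinely different route from the paper's. The paper runs the trigraded one-cell-at-a-time spectral sequence and proves three lemmas: that every filtration degree of every page is a Jack-o-lantern module whose ``phantom degree'' remains the original cell bidegree (so differentials can only go top-cone to lower-cone, and a top-to-bottom differential into the $\beta$ filtration forces the source \emph{cell} to have fixed-set dimension $> p-q$, contradicting (ii)); that $\alpha\not\le\beta$ kills differentials via the hierarchical wedge decomposition; and a diagram-chase lemma converting ``no differentials into the top filtration'' into surjectivity of restriction. You instead work with the single cofibration $X'\hookrightarrow X\to S^{p,q}$, use the hierarchical pushout $X= X'\cup_{X_{<\beta}}X_{\le\beta}$ to factor the connecting map through $H^{\bullet,\bullet}(X_{<\beta})$ (this cleanly replaces Lemma \ref{alphabeta}), and then analyze where the free generators of $H^{\bullet,\bullet}(X_{<\beta})$ can go. The factorization is correct, the top-cone analysis is correct and parallels the paper's forgetful-map argument, and the overall skeleton is viable. (Minor point: the final splitting holds because the quotient $H^{\bullet,\bullet}(X')$ is free, hence projective --- projectivity of the submodule $\Sigma^{p,q}\Mt$ is not what you want to cite.)

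The one real gap is the step you yourself flag as hard. Ruling out the lower-cone case requires knowing that no free generator of $H^{\bullet,\bullet}(X_{<\beta})$ has fixed-set dimension exceeding the maximum fixed-set dimension of the \emph{cells} of $X_{<\beta}$. You assert that the general Kronholm shift formula of \cite{buddies} ``has the same consequence'' as the two-summand swap, but this is exactly the a posteriori information about generator bidegrees that your argument hinges on, and it is not proved. The paper's Jack-o-lantern machinery exists precisely to avoid needing it: by tracking phantom degrees (which equal cell bidegrees on every page), hypothesis (ii) is applied to the cells directly, and no statement about the shifted generator weights of $H^{\bullet,\bullet}(X_{<\beta})$ is ever required. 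To close your version you would need either to extract the ``maximum fixed-set dimension does not increase'' statement from the explicit shift formulas in \cite{buddies}, or to give an independent argument (for instance, that for $b\ll 0$ the groups $H^{a,b}$ of a free module detect the fixed-set dimensions of its generators, while the representation-cell structure bounds the dimension of $X_{<\beta}^{C_2}$ by $\max_\alpha(p'-q')$). As written, the crux of the proof rests on an unproved claim.
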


To prove this theorem, we need three lemmas. The first makes use of lexicographic ordering to establish that the spectral sequence of an equivariant space whose attaching maps are forgetfully trivial will be made up of \jol modules.

	\begin{lem}\label{jackolemma}
		Suppose an equivariant representation-cell complex $X$ is built by attaching cells in lexicographic order. Suppose the associated forgetful chain complex $C^\bullet(\psi(X))$ has all zero differentials. Then the corresponding one-cell-at-a-time equivariant spectral sequence $E_\bullet^{\bullet,\bullet,\bullet}$ with the  lexicographic filtration of $X$ has a \jol module for the $r^{\text{th}}$ filtration of its $k^{\text{th}}$ page, $E_k^{\bullet,\bullet,r}$ for all $k$ and $r$. Furthermore, each differential $d_k:E^{\bullet,\bullet,r}_k\to E^{\bullet+1,\bullet,r+k}_k$ is a \jol map.
	\end{lem}

While the previous lemma admits the possibility of nonzero differentials one might not expect from looking at the forgetful data, the next lemma makes use of the hierarchical structure to establish a condition in which we need not worry about surprise differentials.  

	\begin{lem}\label{alphabeta}
		Under the assumptions of the theorem, if cells $e_\alpha$ and then $e_\beta$ are used in building a space $X$, but $\alpha\not\le\beta$, then there is no differential from the filtration degree of $\alpha$ to that of $\beta$ in the one-cell-at-a-time spectral sequence for $X$.
	\end{lem}

Our third lemma for this theorem establishes a fact we will use to split a long exact sequence into short exact sequences in the proof of the theorem.

	\begin{lem}\label{seqs}
		Let $X$ be a finite filtered space $$\pt=X_0\subseteq X_1\subseteq\dots\subseteq X_{n-1}\subseteq X_n=X.$$ If every differential of the associated spectral sequence mapping into the $\sfrac {X_{n}}{X_{n-1}}$ filtration is zero, then for the cofiber sequence $X_{n-1}\xrightarrow{i}X_n\to \sfrac{X_n}{X_{n-1}}$, the map $i^*:\H\bullet\bullet (X_n)\to \H\bullet\bullet(X_{n-1})$ is surjective.
	\end{lem}

		We first prove the lemmas, and then the theorem. While we will not need the theorem until Section \ref{twontwo}, Lemmas \ref{jackolemma} and \ref{alphabeta} will be used in Section \ref{knone}.

		\begin{proof}[Proof of Lemma \ref{jackolemma}]
			Denote the generator of the $\Sigma^\alpha \Mt$ by $1_\alpha$. On page one of the spectral sequence there are upper cone elements of the form $\rho^i\tau^j 1_\alpha$ and lower cone elements $\frac\theta{\rho^i\tau^j}1_\alpha$. Denote\footnote{This is possible because the one-cell-at-a-time filtration precludes any elements of the form $[\rho^i\tau^j 1_\alpha+\frac\theta{\rho^{i'}\tau^{j'}}1_{\alpha'}]$, as these are filtration-inhomogeneous.} such elements surviving to later pages by $[\rho^i\tau^j 1_\alpha]$ and $[\frac\theta{\rho^i\tau^j}1_\alpha]$.\par
			Proceed by induction on the page of the spectral sequence. To begin with, $E_1=E_1^{\bullet,\bullet,\bullet}$ consists of a single suspension of $\Mt$ in each nonzero filtration. Differentials are determined by the image of each $1_{\alpha}$. Because of our lexicographic filtration, the only possible top-to-top differential is of the form $d_1(1_{\alpha})=\tau^j 1_{\alpha'}$. However, as $\psi(\tau)=1$, this would mean a nonzero differential in the forgetful setting, a contradiction of our assumption. Thus the differential $d_1$ forms a complex of (trivial) \jol modules. Hence by Lemma \ref{jackslemma}, $E_2$ consists of a \jol module in each filtration. See for example Figure \ref{fig:zig}.

	\begin{figure}[h]
		\begin{tikzpicture}[scale=0.4]
			\color{cyan}
			\draw[->] (2.2,1)--(0.2,-1);
			\draw[->] (2.2,1)--(2.2,-1);
			\draw[->] (2.2,3)--(2.2,4);
			\draw[->] (2.2,3)--(3.2,4);
			\color{green}
			\draw[->] (-2,-5)--(-2,-3.5);
			\draw[->] (-2,-5)--(-0.5,-3.5);
			\draw[->] (-2,-7)--(-2,-8);
			\draw[->] (-2,-7)--(-3,-8);
			\color{black}
			\draw[thick, ->] (0,-2) -- (0,-8);
			\draw[thick, ->] (0,-2) -- (-6,-8);
			\draw[thick,->] (0,0) -- (4,4);
			\draw[thick,->] (0,0) -- (0,4);
			\draw (0,-9.5) node {$E_1$};
			\draw (6,-1) node {$\rightsquigarrow$};
			\draw (.6,-.1) node {$\to$};
			\draw (-2+.6,-5) node {$\to$};
		\end{tikzpicture}
		\quad
		\begin{tikzpicture}[scale=0.4]
			\color{gray}
			\draw[dashed] (0,0) -- (2,2);
			\draw[dashed] (0,0) -- (0,1);
			\draw[dashed] (0,-2) -- (0,-4);
			\draw[dashed] (0,-2) -- (-2,-4);
			\color{black}
			\draw[thick,->] (2,2) -- (4,4);
			\draw[thick,->] (0,1) -- (0,4);
			\draw[thick] (0,1) -- (2,3);
			\draw[thick] (2,2) -- (2,3);
			\draw[thick, ->] (0,-4) -- (0,-8);
			\draw[thick] (0,-4) -- (-2,-6);
			\draw[thick] (-2,-4) -- (-2,-6);
			\draw[thick, ->] (-2,-4) -- (-6,-8);
			\fill[fill=black] (2,3) circle (2mm) node[above] {$[\rho^2\tau 1_\alpha]$};
			\fill[fill=black] (-2,-4) circle (2mm) node[above left] {$[\frac\theta{\rho^2} 1_\alpha]$};
			\draw (0,-9.5) node {$E_2$};
		\end{tikzpicture}
		\caption{A filtration degree of $E_1$ with differential in and out of that degree, and the corresponding filtration on the $E_2$ page. Note a connection between upper an lower cones remains. While $[1_\alpha]=0$, it is still the case, for example, that $\frac\theta{\rho^4\tau}\cdot [\rho^2\tau 1_\alpha]=[\frac\theta{\rho^2} 1_\alpha]$. This filtration degree of $E_2^{\bullet,\bullet,r}$ is a \jol module.}
		\label{fig:zig}
	\end{figure}
			
		Now assume for induction that the page $E_k=E_k^{\bullet,\bullet,\bullet}$ consists of a \jol module in each nonzero filtration. We must show that the differentials $d_k$ are \jol maps. First, there can be no top-to-top map on the $E_k$ page: By Corollary \ref{fixedcor}, such a map would carry a nonzero element $[\tau^{N} 1_\alpha]$ to the nonzero element $[\tau^{N'} 1_{\alpha'}]$, where $N'=N+(w(\alpha)-w(\alpha'))$. But considering the forgetful map $\psi$ yields a contradiction, as we see a nonzero differential $\psi(d_k):1_\alpha\mapsto 1_{\alpha'}$, in the non-equivariant spectral sequence, contradicting our assumption about the forgetful chain complex.
		
		Now either $d_k$ is zero on the top cone, and hence identically zero by Lemma \ref{notopnobot} and hence trivially a \jol map, or else $d_k$ maps the top cone to the bottom cone, and is a \jol map since $\theta^2=0$. Again applying Lemma \ref{jackslemma}, $E_{k+1}$ consists of \jol modules. This completes the induction.\\	
		\end{proof}

		\begin{proof}[Proof of Lemma \ref{alphabeta}]
			For our space $X$ with a hierarchical cell structure satisfying the assumptions of the theorem, let $\alpha$ be the  index of a cell attached before the cell $e_\beta$ such that $\alpha\not\le \beta$, and define 
	        $$Y=X_\alpha\cup X_\beta=\left(\bigsqcup_{\lambda\le \alpha}e_\lambda\right)\cup \left(\bigsqcup_{\lambda\le \beta}e_\lambda\right)$$ 
	        and 
	        $$Z=X_\alpha\cap X_\beta=\bigsqcup_{\srac{\lambda<\alpha\text{ and }}{\lambda<\beta}}e_\lambda.$$
	    Then $\sfrac YZ$ is a wedge of spaces 
		$$\sfrac YZ=\sfrac {(X_\alpha)}{Z}\vee\sfrac{(X_\beta)}Z=:A\vee B$$ 
	and there are quotient maps $Y\to  A\vee B\to A$ which respect the filtration grading of the spectral sequence. Thus any element $[x]_Y\in E_k^{\bullet,\bullet,\bullet}(Y)$ with the filtration degree of $\alpha$ corresponds to an element $[x]_A\in E_k^{\bullet,\bullet,\bullet}(A)$. Because $\alpha$ is in the highest nontrivial filtration of $A$, $[x]_A$ is a permanent cycle. We now have the commuting diagram
		\begin{figure}[H]
			\[\xymatrix{
				 [x]_A\ar@{|->}[d]& E_k^{\bullet,\bullet,\bullet}(A)\ar[r]\ar[d]_{d_k} & E_k^{\bullet,\bullet,\bullet}(Y)\ar[d]_{d_k}&  [x]_Y \ar@{|->}[d]\\
						 0 & E_k^{\bullet+1,\bullet,\bullet+k}(A)\ar[r] & E_k^{\bullet+1,\bullet,\bullet+k}(Y)& 0
					}\]
		\end{figure}

	$\!\!\!\!\!\!\!\!\!$ and so in particular the differential between the filtration degrees of $\alpha$ and $\beta$ is zero in $E^{\bullet,\bullet,\bullet}_\bullet(Y)$. Now the inclusion $i:Y\inj X$ induces a surjective spectral sequence map $i^*:E^{\bullet,\bullet,\bullet}_1(X)\to E^{\bullet,\bullet,\bullet}_1(Y)$, and so a nonzero $\alpha$-to$-\beta$ differential in $E_\bullet^{\bullet,\bullet,\bullet}(X)$ would imply one in $E_\bullet^{\bullet,\bullet,\bullet}(Y)$. Thus no such differential can exist.
	\end{proof}

	Finally, we prove Lemma \ref{seqs} by a diagram chase:

	\begin{proof}[Proof of Lemma \ref{seqs}]
		Naming the inclusions $$\pt\xrightarrow{i_0}X_1\xrightarrow{i_1}\dots\xrightarrow{i_{n-2}}X_{n-1}\xrightarrow{i_{n-1}}X_n$$ recall that we build the spectral sequence by weaving together the long exact sequences of the cofiber sequences $X_{k}\xrightarrow{i_k} X_{k+1}\xrightarrow{q_k}\sfrac{X_k}{X_{k-1}}$.\\
		\begin{tabular}{c||c}
		\xymatrix{
			\dots H(\frac{X_{n}}{X_{n-1}})\ar[r]^{q_n^*}&HX_n\ar[d]^{i_{n-1}^*}\\
			\dots H(\frac{X_{n-1}}{X_{n-2}})\ar[r]^{q_{n-1}^*}&
			H(X_{n-1})\ar[d]^{i_{n-2}^*}\ar[r]^\delta&H(\frac{X_n}{X_{n-1}})\dots\\
			\dots H(\frac{X_{n-2}}{X_{n-3}})\ar[r]^{q_{n-2}^*}&
			H(X_{n-2})\ar[d]^{i_{n-3}^*}\ar[r]^\delta&H(\frac{X_{n-1}}{X_{n-2}})\dots\\
			\vdots&\vdots\ar[d]^{i_0^*}&\vdots\\
			\dots H(\frac{X_0}{X_{-1}})\ar[r]^{q_0^*}&HX_0\ar[r]^\delta\ar[d]^{i_{-1}^*}&H(\frac{X_1}{X_0})\dots\\
			&HX_{-1}
		}&
			\xymatrix{
				\ar[r]^{q_n^*}&HX_n\ar[d]^{i_{n-1}^*}\\
				b\ar@{|->}[r]^{q_{n-1}^*}&
				\,\phantom{\int}a\phantom{\int}\ar@{|->}[d]^{i_{n-2}^*}\ar[r]^\delta &H(\frac{X_n}{X_{n-1}})\\
				c\ar@{|->}[r]^{q_{n-2}^*}&
				\,\phantom{\int}i^*_{n-2}(a)\phantom{\int}\ar@{|->}[d]^{i_{n-3}^*}\ar[r]^\delta&\dots\\
				\,\phantom{\int}&\vdots\ar[d]^{i_0^*}&\vdots\\
				\ar[r]^{q_0^*}&\,HX_0\phantom{\int}\ar[r]^\delta\ar[d]^{i_{-1}^*}&\dots\\
				&0
		}
		\end{tabular}

	Assuming all differentials to $H\left(\frac{X_n}{X_{n-1}}\right)$ are zero, we wish to show that $i^*_{n-1}$ is surjective. Consider an element $a\in H(X_{n-1})$.\par 
	If $i_{n-2}^*(a)=0$, by exactness there exists $b$ with $q_{n-1}^*(b)=a$, and then by assumption $d_1(b)=(\delta\circ q_{n-1}^*)(b)=\delta(a)=0$ and so by exactness $a$ lies in the image of $i_{n-1}^*$.\par
	If $i_{n-2}^*(a)\ne 0$ but $i_{n-3}^*(i_{n-2}^*(a))=0$, then by exactness there is some $c$ so that $q_{n-2}^*(c)=i_{n-2}^*(a)$. Since $d_2(c)=\delta(a)=0$, we again have $a$ in the image of $i_{n-1}^*$ by exactness.\par
		Since $H(X_{-1})=0$, eventually we are guaranteed some $(i_k^*\circ i_{k+1}^*\circ\dots\circ i_{n-2}^*)(a)=0$ and hence some $x\in H(\frac{X_k}{X_{k-1}})$ such that $q_{k+1}^*(x)=( i_{k+1}^*\circ\dots\circ i_{n-2}^*)(a)$, and since $d_k(x)=\delta(a)=0$, again $a$ is mapped to by $i_{n-1}^*$. Hence $i_{n-1}^*$ is surjective.
	\end{proof}

We can now prove the theorem. 

	\begin{proof}[Proof of Theorem \ref{fixedthm}]
	Again filter $X$ one-cell-at-a-time lexicographically, meaning by increasing topological dimension and then increasing weight. Let $n$ denote the filtration degree of the cell added to $X'$ to form $X$. This filtration gives a trigraded spectral sequence, as discussed in Section \ref{jolsection}. We claim there are no differentials hitting any nonzero elements of the $n$th filtration degree $\Hbb(\sfrac{X_n}{X_{n-1}})=\Hbb(\sfrac{X}{X'})$. To see this, consider each lower filtration degree $k<n$, which corresponds to some $e_\alpha$ used in building $X'$. Either condition (i) holds ($\alpha\not\le\beta$), in which case by Lemma \ref{alphabeta}, $d_{n-k}(x)=0$ for all $x$ of filtration degree $k$, or else condition (ii) holds. In this case, by Lemma \ref{jackolemma} $d_{n-k}$ is a \jol map, and so $d_{n-k}(x)=0$ for all $x$ of filtration degree $k$, as no top-to-bottom differential is possible. In either case, no differential ever hits the filtration degree of $\beta$. By Lemma \ref{seqs}, this means that $\Hbb(X_n)\to \H\bullet\bullet(X_{n-1})$ is surjective. Now consider the long exact sequence in cohomology corresponding to the cofiber sequence $X_{n-1}\xrightarrow{i}X_n\to \S pq$. 
		\[H^{\bullet-1,\bullet}(X_n)\to H^{\bullet-1,\bullet}(X_{n-1})\xrightarrow{0}\Hbb(\Spq)\to\Hbb(X_n)\to \Hbb(X_{n-1})\xrightarrow{0}\dots\]
		This decomposes into \textit{short} exact sequences
		\[0\to \Sigma^{p,q}\Mt\to\Hbb(X_n)\to\Hbb(X_{n-1})\to0\]
        for all $\bullet$. Since $\Hbb(X_{n-1})$ is free by Theorem \ref{freeness} (or alternatively since $\Mt$ is injective by \cite{clover}) the short exact sequence of modules is split, and the theorem is proved.
	\end{proof}

\section{Grassmannians $\Gr_k(\R^{n,1})$}\label{knone}
We are now ready to tackle the Grassmannian. We begin by introducing a statistic of Young diagrams which will be useful for classifying Schubert cells in the family of spaces $\Gr_k(\R^{n,1})$. Fix $n$ and $k<n$. Given a partition $\lambda=(\lambda_1,\dots,\lambda_k)$ with $\lambda_i\le \lambda_{i+1}$, define trace($\lambda)=\#\{i:\lambda_i\ge k-i+1\}$. Visually, this is the number of squares lying on the diagonal of a Young diagram of this partition. See Figure \ref{fig:traces} for examples. Recall that the \textbf{jump sequence} $\underline{j}=[j_1,\dots,j_k]$ corresponding to a partition $\lambda$ is given by $j_i=\lambda_i+i$. The values of this sequence tell us where the 1s land in the Schubert cell matrix corresponding to $\lambda$. We can also formulate trace as $\text{trace}(\underline{j})=\#\{i:j_i> k\}$.
\begin{figure}[H]
	\begin{tikzpicture}
		\draw(.6,.6) node {$\yng(3,3,3)$};
		\draw(.6,-.5) node {$\trace(3,3,3)=\trace([4,5,6])=3$};
		\color{red}
		\draw[-] (0,0) -- (1.2,1.2);
	\end{tikzpicture}
	\qquad
	\begin{tikzpicture}
		\draw(.6,.6) node {$\yng(1,3,3)$};
		\draw(.6,-.5) node {$\trace(1,3,3)=\trace([2,5,6])=2$};
		\color{red}
		\draw[-] (0,0) -- (.8,.8);
	\end{tikzpicture}\\
	\begin{tikzpicture}
		\draw(.8,.6) node {$\yng(2,2,2)$};
		\draw(.8,-.5) node {$\trace(2,2,2)=\trace([3,4,5])=2$};
		\color{red}
		\draw[-] (.4,0) -- (1.2,.8);
	\end{tikzpicture}
	\qquad
	\begin{tikzpicture}
		\draw(.6,.4) node {$\yng(1,3)$};
		\draw(.6,-.5) node {$\trace(0,1,3)=\trace([1,3,6])=1$};
		\color{red}
		\draw[-] (0,.0) -- (.4,.4);
	\end{tikzpicture}
	\caption{Traces of Young diagrams and jump sequences}
	\label{fig:traces}
\end{figure}


The following lemma uses trace to compute weight.

\begin{lem}
	\label{tracelemma}
	For the Schubert cell structure on $\Gr_k(\R^{n,1})$ corresponding to the decomposition $\R^{n,1}=\R\triv^{k-1}\oplus\R\sgn^1\oplus \R\triv^{n-k}$, the weight of a Schubert cell  $\Omega_\lambda$ is exactly the trace of $\lambda$.
\end{lem}

\begin{proof}
	Fix a Young diagram $\lambda=(\lambda_1,\dots,\lambda_k)$ fitting inside of a $k\times(n-k)$ grid. Recall it has jump sequence $[\lambda_1+1,\dots,\lambda_k+k]$ corresponding to the location of 1s in the family of matrices whose rowspaces make up $\Omega_\lambda$. One of two cases holds. Either
	\begin{itemize}
		\item No element of the jump sequence equals $k$, in which case each row with a jump exceeding $k$ contains a {\color{red}\scalebox{0.8}{\young(-)}} in dimension $k$ (see the top two examples in Figure \ref{fig:traceweight}). So the number of {\color{red}\scalebox{0.8}{\young(-)}} is $\#\{i: j_i>k\}=\#\{i: j_i\ge k+1\}=\text{trace}(\lambda)$.\\ or 
		\item A $1$ \emph{does} lie in column $k$, say in row $r$. Then $$\text{trace}(\lambda)=\#\{i:j_i> k\}=\#\{i:i> r\}=k-r,$$ which also equals the number of {\color{red}\scalebox{0.8}{\young(-)}} appearing to the left of this $1$ in row $r$ (see bottom examples in Figure \ref{fig:traceweight}). This is because when a matrix which is acted upon and rewritten in canonical Schubert cell form (as on page \pageref{equivschub}), the $r$-th row will be multiplied by -1, changing the sign on each of the $k-r$ variables in that row.
	\end{itemize}
	Recall that the topological dimension of a Schubert cell corresponds to the number of boxes in its Young diagram, and the weight to the number of these {\color{red}\scalebox{0.8}{\young(-)}} boxes. And so $w(\lambda)=\text{trace}(\lambda)$.
\end{proof}

\begin{figure}[h]
	\begin{tikzpicture}
	 \draw(0,1) node {$\left[	\begin{matrix}
		  	\young(+) & \young(+) & {\color{red}\young(-)} & 1 & 0 & 0 \\
		  	\young(+) & \young(+) & {\color{red}\young(-)} & 0 & 1 & 0 \\
			\young(+) & \young(+) & {\color{red}\young(-)} & 0 & 0 & 1
	 	\end{matrix}\right]$};
	\draw(0,0) node {$w(3,3,3)=w([4,5,6])=3$};
	\end{tikzpicture}
	\qquad\quad
	\begin{tikzpicture}
	 \draw(0,1) node {$\left[	\begin{matrix}
		  	\young(+) & 1 & 0 & 0 & 0 & 0 \\
		  	\young(+) & 0 & {\color{red}\young(-)}& \young(+)  & 1 & 0 \\
			\young(+) & 0 & {\color{red}\young(-)}& \young(+) & 0 & 1 \\
	 	\end{matrix}\right]$};
	\draw(0,0) node {$w(1,3,3)=w([2,5,6])=2$};
	\end{tikzpicture}\\
	\begin{tikzpicture}
	 \draw(0,1) node {$\left[	\begin{matrix}
		  	{\color{red}\young(-)} & {\color{red}\young(-)} & 1 & 0 & 0 & 0 \\
		  	\young(+) & \young(+) & 0 & 1 & 0 & 0 \\
			\young(+) & \young(+) & 0 & 0 & 1 & 0
	 	\end{matrix}\right]$};
	\draw(0,0) node {$w(2,2,2)=w([3,4,5])=2$};
	\end{tikzpicture}
	\qquad\quad
	\begin{tikzpicture}
	 \draw(0,1) node {$\left[	\begin{matrix}
		  	1 & 0 & 0 & 0 & 0 & 0 \\
		  	0 &{\color{red}\young(-)} & 1 & 0 & 0 & 0 \\
			0 & \young(+) & 0 & \young(+) & \young(+) & 1
	 	\end{matrix}\right]$};
	\draw(0,0) node {$w(0,1,3)=w([1,3,6])=1$};
	\end{tikzpicture}
	\caption{$k=3$, some cells in $I(++-+++)$ for $\Gr_3(\R^{6,1})$.}
	\label{fig:traceweight}
\end{figure}

Let $\part(p,k,m,t)$ denote the number of partitions of $p$ into $k$ non-negative numbers not exceeding a maximum value $m$, such that the Young diagram corresponding to the partition has trace $t$. Lemma \ref{tracelemma} says that in building $\Gr_k(\R^{n,1})$ using $I(\R\triv^{k-1}\oplus\R\sgn\oplus \R\triv^{n-k})$, the number of $(p,q)$-cells is $\part(p,k,n-k,q)$.\\

\subsection{Example}
	In the ingredients table  $I(++-+++++)$ for $\Gr_3(\R^{8,1})$, the number of $(11,2)$-cells is $\part(11,3,5,2)=2$. This counts \scalebox{0.3}{\yng(1,5,5)} and \scalebox{0.3}{\yng(2,4,5)}, but does not count, for example, \scalebox{0.3}{\yng(1,2,4,4)} (too many terms) or \scalebox{0.3}{\yng(5,6)} (a term exceeds 5) or the trace-3 diagrams \scalebox{0.3}{\yng(3,3,5)} or \scalebox{0.3}{\yng(3,4,4)}, which correspond instead to $(11,3)$-cells.

Now that we know the combinatorics of this construction, we show that the corresponding equivariant cellular spectral sequence collapses.

\begin{lem}
	\label{tracelemma2}
	All differentials are zero in the cellular spectral sequence for $\Gr_k(\R^{n,1})$ corresponding to the ordered decomposition $\R^{n,1}= \R\triv^{k-1}\oplus\R\sgn\oplus\R\triv^{n-k}$.
\end{lem}

\begin{proof}
	In order for a nonzero differential to exist, there must be some Young diagrams $\alpha$ and $\beta$ in bidegrees allowing for a map from the generator of $\alpha$ to the lower cone of $\beta$ (by Lemma \ref{jackolemma}), and also with $\alpha\subset\beta$ (by Lemma \ref{alphabeta}). The bidegree requirement demands that the fixed-set dimension of $\alpha$ is greater than that of $\beta$. That is, denoting the topological degree of $\lambda$ by $|\lambda|$, we must have $|\alpha|-w(\alpha)>|\beta|-w(\beta)$. However, as $\alpha\subset\beta$, the diagram $\beta$ could be built from $\alpha$ by successively adding blocks. Each block would increase topological dimension by one, but could increase the trace (and hence by Lemma \ref{tracelemma} the weight $w$) by at \emph{most} one. Thus $\alpha\subset\beta$ implies $|\alpha|-w(\alpha)\le|\beta|-w(\beta)$. These conflicting requirements show that no differentials are possible if $\Gr_k(\R^{n,1})$ is built in this way. 
\end{proof}

\subsection{Example}
	Suppose $\alpha=(8)=\scalebox{0.4}{\yng(8)}$ so that $|\alpha|-w(\alpha)=8-1$. 
	\begin{itemize}
		\item If $\beta=(1,8)=\scalebox{0.4}{\yng(1,8)}$, then although $\alpha\subseteq \beta$, there is no differential to the filtration of $\beta$ as $8-1\not> |\beta|-w(\beta)=9-1$. That is, $\theta\beta$ is too low for a differential from $\alpha$ to reach it.
		\item On the other hand if $\beta=(3,3,3)=\scalebox{0.4}{\yng(3,3,3)}$ then $|\beta|-w(\beta)=9-3<8-1$, however, this doesn't fit: $\alpha\not\subseteq\beta$ and so there is still no differential, by Lemma \ref{alphabeta}.
	\end{itemize}

Theorem \ref{kn1thm} is now immediate. We restate it here:

\begin{thm}\label{knoneagain}
	\[\rank_{\Mt}^{p,q}\H \bullet\bullet(\Gr_k(\R^{n,1}))=\part(p,k,n-k,q).\]
\end{thm}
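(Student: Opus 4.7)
The plan is to observe that essentially all the work has been done in the preceding two lemmas, so the argument is just an assembly of facts.

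First, I would fix the ordered decomposition $\R^{n,1} = \R_{\triv}^{k-1} \oplus \R_{\sgn} \oplus \R_{\triv}^{n-k}$. This gives $\Gr_k(\R^{n,1})$ a representation cell structure whose cells are the equivariant Schubert cells $\Omega_\lambda$ indexed by Young diagrams $\lambda$ fitting inside a $k \times (n-k)$ rectangle. By Lemma \ref{tracelemma}, each such cell $\Omega_\lambda$ has bidegree $(|\lambda|, \trace(\lambda))$, so the number of representation cells of bidegree $(p,q)$ equals $\part(p,k,n-k,q)$.

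Next, I would set up the one-cell-at-a-time equivariant cellular spectral sequence associated to a lexicographic ordering of the cells, as described in Section \ref{jolsection}. The $E_1$ page consists of a suspension $\Sigma^{|\lambda|, \trace(\lambda)} \Mt$ in each filtration degree. Because the non-equivariant Schubert cell complex has zero differentials, Lemma \ref{jackolemma} applies: all pages consist of Jack-o-lantern modules, and every differential is a Jack-o-lantern map, hence can only carry top-cone elements to lower-cone elements. By Lemma \ref{alphabeta}, a nonzero differential from the filtration of $\alpha$ to that of $\beta$ further requires $\alpha \le \beta$, i.e.\ $\alpha \subseteq \beta$ as Young diagrams.

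Then I would invoke Lemma \ref{tracelemma2}: combining the two constraints above, a nonzero differential from $\alpha$ to $\beta$ would force $|\alpha| - \trace(\alpha) > |\beta| - \trace(\beta)$ (the fixed-set dimension must strictly decrease to reach a lower cone), while $\alpha \subseteq \beta$ forces $|\alpha| - \trace(\alpha) \le |\beta| - \trace(\beta)$ because adding a single box increases $|\lambda|$ by one and $\trace(\lambda)$ by at most one. These are incompatible, so all differentials vanish and the spectral sequence collapses at $E_1$.

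Finally, since $\Gr_k(\R^{n,1})$ has a representation cell structure, Kronholm's theorem guarantees that $\H\bullet\bullet(\Gr_k(\R^{n,1}))$ is a free $\Mt$-module. The collapse of the spectral sequence at $E_1$ (combined with the fact that free summands cannot be absorbed by Kronholm shifts without differentials) means the $\Mt$-rank in bidegree $(p,q)$ equals the number of $E_1$ generators in that bidegree, which is $\part(p,k,n-k,q)$. The main conceptual obstacle -- ruling out Kronholm shifts between upper cones of suspensions of $\Mt$ -- was already handled by the combinatorial pigeonhole in Lemma \ref{tracelemma2}, so at this stage the theorem is indeed immediate.
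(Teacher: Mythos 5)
Your proposal is correct and follows the paper's proof exactly: fix the decomposition $\R_{\triv}^{k-1}\oplus\R_{\sgn}\oplus\R_{\triv}^{n-k}$, apply Lemma \ref{tracelemma} to identify the $E_1$ generators with Young diagrams in bidegree $(|\lambda|,\trace(\lambda))$, and apply Lemma \ref{tracelemma2} (itself resting on Lemmas \ref{jackolemma} and \ref{alphabeta}) to see the spectral sequence collapses. The extra recapitulation of the fixed-set-dimension pigeonhole is just restating the proof of Lemma \ref{tracelemma2}, which the paper treats as already done.
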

\begin{proof}
	By Lemma \ref{tracelemma}, we have a cellular spectral sequence for $\Gr_k(\R^{n,1})$ with generators on the $E_1$ page corresponding to Young diagrams, with topological dimension given by number of boxes, and weight given by trace. By Lemma \ref{tracelemma2}, this spectral sequence immediately collapses.
\end{proof}

\subsection{Comment}
\label{upside}
In Section \ref{foreshadow}, we observed that the rows of the rank charts of cohomologies $\H\bullet\bullet(\Gr_{k}(\R^{n,1}))$ are palindromes.
We can deduce this from the fact that $\rank_{\Mt}^{p,q}\H\bullet\bullet(\Gr_k(\R^{n,1}))$ counts Young diagrams of $p$ boxes with trace $q$ fitting inside of a $k$-by-$(n-k)$ box. To have trace $q$, a Young diagram must have a $q$-by-$q$ square as its southwest corner, with any additional boxes lying in a region to the north or to the east of this square. For example, considering $\Gr_4(\R^{9,1})$, trace-2 diagrams take the form
\[k=4\left\{\phantom{\young(a,b,c,d)}\right.\!\!\!\!\!\!\!\!
\underbrace{\young(??,??,{\,}\diagup ???,\diagup {\,}???)}_{n-k=5}\subseteq \yng(5,5,5,5)\,.\]
For a given trace $q$, the topological dimension $p$ of a Young diagram (corresponding to the number of boxes) is $p=q^2+\#\scalebox{0.8}{\young(?)}$ where $$0\le \#\scalebox{0.8}{\young(?)}\le(k-q)q+q(n-k-q).$$ If we take the complementary diagram in these north and east regions, we get a diagram of dimension
\[q^2+(k-q)q+q(n-k-q)-\#\young(?)=nq-p.\]
This process is clearly reversible, and forms a bijection. For example Figure \ref{fig:bijection} demonstrates this bijection in $\H\bullet\bullet(\Gr_4(\R^{9,1}))$ to show why $\rank_{\Mt}^{6,2}=\rank_{\Mt}^{2*9-6,2}=\rank_{\Mt}^{12,2}=5$.
\begin{figure}
\begin{tikzpicture}[scale=.28]
	\draw[opacity=.5, fill=gray] (0,0)--(2,0)--(2,2)--(0,2)--(0,0);
	\draw (0,1)--(2,1);
	\draw (1,0)--(1,2);
	\draw (0,0)--(2,2);
	\draw[dashed] (0,2)--(0,4)--(2,4)--(2,2)--(5,2)--(5,0)--(2,0);
	\draw(-1,1) node[left] {\tiny $\yng(2,4)$};
	\draw(2,0)--(4,0)--(4,1)--(2,1);
	\draw(3,0)--(3,1);
	\draw[->] (6,2)--(10,2);
	\draw (8,2) node[above] {\small complement};
	\draw (8,2) node[below] {\small regions};
\end{tikzpicture}
\begin{tikzpicture}[scale=.28]
	\draw[opacity=.5, fill=gray] (0,0)--(2,0)--(2,2)--(0,2)--(0,0);
	\draw (0,1)--(2,1);
	\draw (1,0)--(1,2);
	\draw (0,0)--(2,2);
	\draw[dashed] (0,2)--(0,4)--(2,4)--(2,2)--(5,2)--(5,0)--(2,0);
	\draw(6,2) node[right] {\tiny $\yng(2,2,3,5)$};
	\draw(2,0)--(5,0)--(5,1)--(3,1)--(3,2)--(2,2)--(2,4)--(0,4)--(0,2);
	\draw(1,2)--(1,4);
	\draw(0,3)--(2,3);
	\draw(2,1)--(3,1)--(3,0);
	\draw(4,0)--(4,1);
\end{tikzpicture}\\
\begin{tikzpicture}[scale=.28]
	\draw[opacity=.5, fill=gray] (0,0)--(2,0)--(2,2)--(0,2)--(0,0);
	\draw (0,1)--(2,1);
	\draw (1,0)--(1,2);
	\draw (0,0)--(2,2);
	\draw[dashed] (0,2)--(0,4)--(2,4)--(2,2)--(5,2)--(5,0)--(2,0);
	\draw(-2,1) node[left] {\tiny $\yng(3,3)$};
	\draw[->] (6,2)--(10,2);
	\draw (8,2) node[above] {\small complement};
	\draw (8,2) node[below] {\small regions};
	\draw (2,0)--(3,0)--(3,1)--(2,1);
	\draw (2,2)--(3,2)--(3,1);
\end{tikzpicture}
\begin{tikzpicture}[scale=.28]
	\draw[opacity=.5, fill=gray] (0,0)--(2,0)--(2,2)--(0,2)--(0,0);
	\draw (0,1)--(2,1);
	\draw (1,0)--(1,2);
	\draw (0,0)--(2,2);
	\draw[dashed] (0,2)--(0,4)--(2,4)--(2,2)--(5,2)--(5,0)--(2,0);
	\draw(7,2) node[right] {\tiny $\yng(2,2,4,4)$};
	\draw(2,0)--(4,0)--(4,2)--(2,2);
	\draw(3,0)--(3,2);
	\draw(2,1)--(4,1);
	\draw(0,2)--(0,4)--(2,4)--(2,2);
	\draw(1,2)--(1,4);
	\draw(0,3)--(2,3);
\end{tikzpicture}\\
\begin{tikzpicture}[scale=.28]
	\draw[opacity=.5, fill=gray] (0,0)--(2,0)--(2,2)--(0,2)--(0,0);
	\draw (0,1)--(2,1);
	\draw (1,0)--(1,2);
	\draw (0,0)--(2,2);
	\draw[dashed] (0,2)--(0,4)--(2,4)--(2,2)--(5,2)--(5,0)--(2,0);
	\draw(-2,1.5) node[left] {\tiny $\yng(1,2,3)$};
	\draw[->] (6,2)--(10,2);
	\draw (8,2) node[above] {\small complement};
	\draw (8,2) node[below] {\small regions};
	\draw (0,2)--(0,3)--(1,3)--(1,2);
	\draw (2,1)--(3,1)--(3,0)--(2,0);
\end{tikzpicture}
\begin{tikzpicture}[scale=.28]
	\draw[opacity=.5, fill=gray] (0,0)--(2,0)--(2,2)--(0,2)--(0,0);
	\draw (0,1)--(2,1);
	\draw (1,0)--(1,2);
	\draw (0,0)--(2,2);
	\draw[dashed] (0,2)--(0,4)--(2,4)--(2,2)--(5,2)--(5,0)--(2,0);
	\draw (0,2)--(0,4)--(1,4)--(1,2);
	\draw (0,3)--(2,3)--(2,2);
	\draw (2,1)--(4,1)--(4,0)--(2,0);
	\draw (3,0)--(3,2)--(2,2);
	\draw (3,2)--(4,2)--(4,1)--(5,1)--(5,0)--(4,0);
	\draw(6,2) node[right] {\tiny $\yng(1,2,4,5)$};
\end{tikzpicture}\\
\begin{tikzpicture}[scale=.28]
	\draw[opacity=.5, fill=gray] (0,0)--(2,0)--(2,2)--(0,2)--(0,0);
	\draw (0,1)--(2,1);
	\draw (1,0)--(1,2);
	\draw (0,0)--(2,2);
	\draw[dashed] (0,2)--(0,4)--(2,4)--(2,2)--(5,2)--(5,0)--(2,0);
	\draw(-3,1.5) node[left] {\tiny $\yng(2,2,2)$};
	\draw[->] (6,2)--(10,2);
	\draw (8,2) node[above] {\small complement};
	\draw (8,2) node[below] {\small regions};
	\draw (0,2)--(0,3)--(2,3)--(2,2);
	\draw (1,2)--(1,3);
\end{tikzpicture}
\begin{tikzpicture}[scale=.28]
	\draw[opacity=.5, fill=gray] (0,0)--(2,0)--(2,2)--(0,2)--(0,0);
	\draw (0,1)--(2,1);
	\draw (1,0)--(1,2);
	\draw (0,0)--(2,2);
	\draw[dashed] (0,2)--(0,4)--(2,4)--(2,2)--(5,2)--(5,0)--(2,0);
	\draw(6,2) node[right] {\tiny $\yng(2,5,5)$};
	\draw (0,2)--(0,3)--(2,3)--(2,2);
	\draw (1,2)--(1,3);
	\draw (2,0)--(5,0);
	\draw (2,1)--(5,1);
	\draw (2,2)--(5,2);
	\draw (2,0)--(2,2);
	\draw (3,0)--(3,2);
	\draw (4,0)--(4,2);
	\draw (5,0)--(5,2);
\end{tikzpicture}\\
\begin{tikzpicture}[scale=.28]
	\draw[opacity=.5, fill=gray] (0,0)--(2,0)--(2,2)--(0,2)--(0,0);
	\draw (0,1)--(2,1);
	\draw (1,0)--(1,2);
	\draw (0,0)--(2,2);
	\draw[dashed] (0,2)--(0,4)--(2,4)--(2,2)--(5,2)--(5,0)--(2,0);
	\draw(-3,2) node[left] {\tiny $\yng(1,1,2,2)$};
	\draw[->] (6,2)--(10,2);
	\draw (8,2) node[above] {\small complement};
	\draw (8,2) node[below] {\small regions};
	\draw (0,2)--(0,4)--(1,4)--(1,2);
	\draw (0,3)--(1,3);
\end{tikzpicture}
\begin{tikzpicture}[scale=.28]
	\draw[opacity=.5, fill=gray] (0,0)--(2,0)--(2,2)--(0,2)--(0,0);
	\draw (0,1)--(2,1);
	\draw (1,0)--(1,2);
	\draw (0,0)--(2,2);
	\draw[dashed] (0,2)--(0,4)--(2,4)--(2,2)--(5,2)--(5,0)--(2,0);
	\draw(6,2) node[right] {\tiny $\yng(1,1,5,5)$};
	\draw (0,2)--(0,4)--(1,4)--(1,2);
	\draw (0,3)--(1,3);
	\draw (2,0)--(5,0);
	\draw (2,1)--(5,1);
	\draw (2,2)--(5,2);
	\draw (2,0)--(2,2);
	\draw (3,0)--(3,2);
	\draw (4,0)--(4,2);
	\draw (5,0)--(5,2);
\end{tikzpicture}\\
	\caption{Bijection between $\rank_{\Mt}^{6,2}$ and $\rank_{\Mt}^{12,2}$ in $\H\bullet\bullet(\Gr_4(\R^{9,1}))$.}
	\label{fig:bijection}
\end{figure}

 More generally, this bijection proves: 
\begin{thm}
	\[\rank_{\Mt}^{p,q}\H\bullet\bullet(\Gr_k(\R^{n,1}))=\rank_{\Mt}^{nq-p,q}\H\bullet\bullet(\Gr_k(\R^{n,1})).\]
\end{thm}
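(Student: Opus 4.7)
By Theorem \ref{knoneagain}, $\rank_{\Mt}^{p,q}\H\bullet\bullet(\Gr_k(\R^{n,1}))=\part(p,k,n-k,q)$, so the statement reduces to the purely combinatorial identity
\[\part(p,k,n-k,q)=\part(nq-p,k,n-k,q).\]
The plan is to build an explicit involution on the set of Young diagrams $\lambda$ with $|\lambda|=p$, $\lambda\subseteq k\times(n-k)$, and $\trace(\lambda)=q$, which changes the number of boxes from $p$ to $nq-p$ while preserving the trace and the ambient rectangle. This is precisely the procedure illustrated in Figure \ref{fig:bijection}.

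First I would establish a canonical decomposition. Writing $\lambda=(\lambda_1,\dots,\lambda_k)$ in weakly increasing order, the trace condition $\trace(\lambda)=q$ forces $\lambda_{k-q+1}\ge q$ and $\lambda_{k-q}\le q$. Consequently every such $\lambda$ contains the $q\times q$ square occupying rows $k-q+1,\dots,k$ and columns $1,\dots,q$, and its remaining boxes split cleanly into two independent sub-partitions: a \emph{north region} $\mu$, consisting of rows $1,\dots,k-q$, which is a partition fitting in a $(k-q)\times q$ rectangle, and an \emph{east region} $\nu$, consisting of the portions of rows $k-q+1,\dots,k$ lying in columns $q+1,\dots,n-k$, which (after shifting leftward by $q$) is a partition fitting in a $q\times(n-k-q)$ rectangle. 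The key observation is that the monotonicity constraint across the boundary between rows $k-q$ and $k-q+1$ is automatic: the former has at most $q$ boxes and the latter has at least $q$, so $\mu$ and $\nu$ may be chosen independently.

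Next I define the involution $\Phi$ by replacing $\mu$ with its complement $\bar\mu$ inside the $(k-q)\times q$ rectangle and $\nu$ with its complement $\bar\nu$ inside the $q\times(n-k-q)$ rectangle (each complement rotated $180^\circ$ to remain a valid partition), while keeping the $q\times q$ square fixed. Since $|\bar\mu|=q(k-q)-|\mu|$ and $|\bar\nu|=q(n-k-q)-|\nu|$, the new diagram has
\[|\Phi(\lambda)|=q^2+q(k-q)-|\mu|+q(n-k-q)-|\nu|=qn-\bigl(q^2+|\mu|+|\nu|\bigr)=nq-p,\]
still fits in the $k\times(n-k)$ rectangle (the bounding boxes are unchanged), still contains the SW $q\times q$ square and so still has trace exactly $q$, and is clearly fixed by a second application. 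Thus $\Phi$ is a bijection between the two sets, proving the identity and hence the theorem.

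I expect no serious obstacle, as the only point requiring care is verifying that $\Phi(\lambda)$ is actually a valid Young diagram; this is exactly where the independence of the north and east regions (guaranteed by the trace condition) is used. Everything else is bookkeeping with the rectangle dimensions.
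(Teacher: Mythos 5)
Your proposal is correct and follows essentially the same route as the paper: the paper's argument in Section \ref{upside} is exactly the decomposition of a trace-$q$ diagram into the fixed southwest $q\times q$ square plus independent north and east regions, followed by complementing and rotating each region within its $(k-q)\times q$ and $q\times(n-k-q)$ rectangles (illustrated in Figure \ref{fig:bijection}). Your write-up is in fact a bit more careful than the paper's, in that you verify explicitly that the trace condition makes the two regions independent and that the involution preserves the trace.
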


It would be nice if this apparent duality could be given a geometric interpretation. We do not know one.

\section{Grassmannians $\Gr_2(\R^{n,2})$}\label{twontwo}

\subsection{Comment}
In this section we work up to a general formula for the cohomology of $\Gr_2(\R^{n,2})$ somewhat slowly, starting with calculations for small $n$ which rely on observations about multiple Schubert cell constructions. We do this until we reach a value of $n$ after which we can use the same construction each time, with no new differentials appearing.\par
This approach -- comparing multiple constructions to deduce unknown differentials -- can be automated to perform further calculations not appearing in this paper. In fact a Sage program generating a fund of computations by investigating \emph{all} possible constructions first motivated these results. We hope to write more about this soon.

\subsection{When $n=3$}\label{n=3}
It happens that  $\Gr_2(\R^{3,2})$ is actually isomorphic to $\Gr_1(\R^{3,1})$ and so we have technically already done this computation in \ref{we1}. Nonetheless, the decomposition $\R^{3,2}=\R\sgn\oplus \R\triv\oplus \R\sgn=\R^{-+-}$ gives

\begin{center}
	$I(+-+)=$
	\begin{tabular}{||c|c|c}
	&&\\
	\hline
	&\scalebox{.5}{\young(-)}\,\,&\scalebox{.5}{\young(-,+)}\,\,\\
	\hline
	$\emptyset$\,&&\\
	\hline
	\hline
	\end{tabular}
\end{center}
with no possible differentials, since $\scalebox{0.4}{\yng(1)}\mapsto \scalebox{0.4}{\yng(1,1)}$ would give a nonzero map in singular cohomology, and so $H^{\bullet,\bullet}\Gr_2(\R^{3,2})=\Mt\oplus\M11\oplus\M21$. By the forgetful long exact sequence in \ref{rholes}, we can also say that each of these three generators maps to the unique Schubert class in their topological dimension. We represent this information by labeling generators in the rank chart by their image under $\psi$.
\begin{center}
	\scalebox{.8}{\begin{tikzpicture}[scale=1.0]
		\def\pmax{ 2 }
		\def\qmax{ 1 }
		\draw[step=1cm,lightgray,very thin] (0,0) grid (\pmax+0.5,\qmax+.5);
		\draw[thick,->] (0,-0.5) -- (0,.5+\qmax);
		\draw[thick,->] (-0.5,0) -- (.5+\pmax,0);
		\foreach \x in {1,...,\pmax}
			\draw (\x cm,1pt) -- (\x cm,-1pt) node[anchor=north] {$\x$};
		\foreach \y in {1,...,\qmax}
			\draw (1pt,\y cm) -- (-1pt,\y cm) node[anchor=east] {$\y$};
		\draw (0,0) node[above right] {$\emptyset$};
		\draw (1,1) node[above right] {\scalebox{.5}{\yng(1)}};
		\draw (2,1) node[above right] {\scalebox{.5}{\yng(1,1)}};
	\end{tikzpicture}}
	\quad or in jump-sequence notation, \quad
	\scalebox{.8}{\begin{tikzpicture}[scale=1.0]
		\def\pmax{ 2 }
		\def\qmax{ 1 }
		\draw[step=1cm,lightgray,very thin] (0,0) grid (\pmax+0.5,\qmax+.5);
		\draw[thick,->] (0,-0.5) -- (0,.5+\qmax);
		\draw[thick,->] (-0.5,0) -- (.5+\pmax,0);
		\foreach \x in {1,...,\pmax}
			\draw (\x cm,1pt) -- (\x cm,-1pt) node[anchor=north] {$\x$};
		\foreach \y in {1,...,\qmax}
			\draw (1pt,\y cm) -- (-1pt,\y cm) node[anchor=east] {$\y$};
		\draw (0,0) node[above right] {$[1,2]$};
		\draw (1,1) node[above right] {$[1,3]$};
		\draw (2,1) node[above right] {$[2,3]$};
	\end{tikzpicture}}.
\end{center}
\subsection{When $n=4$}
\label{twofourtwo}
Next consider $\Gr_2(\R^{4,2})$. Two ingredients tables are shown in Figure \ref{two_ingredient_tables}.

\begin{figure}[h]
	\small
	\begin{tabular}{||c|c|c|c|c}
	&&&\scalebox{.5}{\young(-,--)}\,\,&\\
	\hline
	&&&&\scalebox{.5}{\young(+-,-+)}\,\,\\
	\hline
	&\scalebox{.5}{\young(-)}\,\,&\scalebox{.5}{\young(+-)}\,\,\scalebox{.5}{\young(-,+)}\,\,&&\\
	\hline
	$\emptyset$\,&&&&\\
	\hline
	\hline
	\end{tabular}\,\,
	\qquad. 
	\begin{tabular}{||c|c|c|c|c}
	&&&&\\
	\hline
	&&\scalebox{.5}{\young(--)}\,\,\scalebox{.5}{\young(-,-)}\,\,&\scalebox{.5}{\young(-,+-)}\,\,&\scalebox{.5}{\young(-+,+-)}\\
	\hline
	&&&&\\
	\hline
	$\emptyset$\,&\scalebox{.5}{\young(+)}\,\,&&&\\
	\hline
	\hline
	\end{tabular}
	\caption{Ingredients tables $I(-+-+)$ and $I(+--+)$ for $\Gr_2(\R^{4,2})$.}
	\label{two_ingredient_tables}
\end{figure}

If we knew every cofiber sequence differential, we could iteratively attach the cells using just one construction, computing the cohomology of the subspaces using the long exact sequence for each cofiber $X_k\inj X_{k+1}\to S^{\alpha_k}$ until arriving at the answer. Considering the first construction, this is straightforward while building the two-skeleton, as no nonzero differentials were possible. However, when attaching the $e^{3,3}$ labeled $\scalebox{0.4}{\yng(1,2)}$ to this two-skeleton whose cohomology must be $\Mt\oplus \M11\oplus(\M21)^{\oplus 2}$, we have a possible differential and, naively, no way to determine whether it is nonzero.

\begin{figure}[h]
	\begin{tikzpicture}[scale=0.8]
		\def\pmax{ 3 }
		\def\qmax{ 3 }
		\draw[step=1cm,lightgray,very thin] (0,0) grid (\pmax+0.75,\qmax+.5);
		\draw[thick,->] (0,-0.5) -- (0,.5+\qmax);
		\draw[thick,->] (-0.5,0) -- (1+\pmax,0);
		\foreach \x in {1,...,\pmax}
			\draw (\x cm,1pt) -- (\x cm,-1pt) node[anchor=north] {$\x$};
		\foreach \y in {1,...,\qmax}
			\draw (1pt,\y cm) -- (-1pt,\y cm) node[anchor=east] {$\y$};
		\draw (0,0) node[above right] {$\emptyset$};
		\draw (1,1) node[above right] {\scalebox{.4}{\yng(1)}};
		\draw (2,1) node[above right] {\scalebox{.4}{{\yng(2)}}};
		\draw (2,1.3) node[above right] {\scalebox{.4}{{\yng(1,1)}}};
		\draw[->] (2.75,1.5)--node[above]{?}(3.2,1.5);
		\color{blue}
		\draw (3,3) node[above right] {\scalebox{.4}{\yng(1,2)}};
		\draw[->] (3.5,1.5)--(3.5,-0.5);
		\draw[->] (3.5,1.5)--(1.5,-0.5);
		\draw[->] (3.5,3.5)--(4,4);
		\draw[->] (3.5,3.5)--(3.5,4);
		\draw (3.8,1.8) node {$\theta\cdot $\scalebox{0.4}{\yng(1,2)}};
	\end{tikzpicture}
	\caption{One stage of the $-+-+$ construction, corresponding to the cofiber sequence for including the 2-skeleton into the 3-skeleton.}
\end{figure}

If the differential is zero, we next attach an $e^{4,2}$ which has no possible differentials for bidegree reasons, and so our answer would be $\Mt\oplus\M11\oplus(\M21)^{\oplus 2}\oplus\M33\oplus\M42$. This is where the second construction comes in. Notice the $\M33$ in our first hypothetical scenario. In the second construction of Figure \ref{two_ingredient_tables}, no chain of events can end with a generator in this bidegree: The $\scalebox{0.4}{\yng(1,2)}$ would have to shift \emph{up}, which could only happen if a later cell of fixed-set dimension 0 were attached (see \cite{buddies} for more details on Kronholm shifts). As this cannot happen, our mystery differential in the first construction must be non-zero, so that after the resulting shift, we have $$\H\bullet\bullet(\Gr_2(\R^{4,2}))=\Mt\oplus\M11\oplus\M21\oplus\M22\oplus\M32\oplus\M42.$$

We have now answered the question of the module structure of $\H\bullet\bullet(\Gr_2(\R^{4,2}))$. We can also ask about the image of these generators under the forgetful map $\psi$ in terms of Schubert elements. Most of the generators of this free module have an obvious image under $\psi$, as there is a unique generator in most dimensions of the non-equivariant cohomology. But there is some room for ambiguity in dimension 2.
\begin{figure}[H]
	\label{n=4}
	\begin{tikzpicture}[scale=1.0]
		\def\pmax{ 4 }
		\def\qmax{ 2 }
		\draw[step=1cm,lightgray,very thin] (0,0) grid (\pmax+0.75,\qmax+.5);
		\draw[thick,->] (0,-0.5) -- (0,.5+\qmax);
		\draw[thick,->] (-0.5,0) -- (1+\pmax,0);
		\foreach \x in {1,...,\pmax}
			\draw (\x cm,1pt) -- (\x cm,-1pt) node[anchor=north] {$\x$};
		\foreach \y in {1,...,\qmax}
			\draw (1pt,\y cm) -- (-1pt,\y cm) node[anchor=east] {$\y$};
		\draw (0,0) node[above right] {$\emptyset$};
		\draw (1,1) node[above right] {\scalebox{.7}{\yng(1)}};
		\draw (2,1) node[above right] {$x$};
		\draw (2,2) node[above right] {$y$};
		\draw (3,2) node[above right] {\scalebox{.7}{\yng(1,2)}};
		\draw (4,2) node[above right] {\scalebox{.7}{\yng(2,2)}};
	\end{tikzpicture}
	\caption{$\H\bullet\bullet(\Gr_2(\R^{4,2}))$. We choose a generator in each bidegree with a free summand. The image under $\psi$ in dimensions other than 2 is unambiguous. What can be said about the choices $\psi(x)$ and $\psi(y)$? (Note we \emph{choose} a generator because for example we could replace $y$ with $y'=y+\rho x$.)}
\end{figure}

We wish to know the images under $\psi$ of $x$ and $y$. The span of $\psi(x)$ and $\psi(y)$ is that of the non-equivariant Schubert classes $\scalebox{.5}{\yng(2)}$ and $\scalebox{.5}{\yng(1,1)}$. But it isn't clear yet who is sent where. Consider the inclusion $\Gr_2(\R^{+--})\xrightarrow{i} \Gr_2(\R^{+--+})$. Note that from Figure \ref{two_ingredient_tables}, $\sfrac{\Gr_2\R^{4,2}}{\Gr_2\R^{3,2}}$ can be built from a point and cells of weight two in such a way that $\rH21(\sfrac{\Gr_2\R^{4,2}}{\Gr_2\R^{3,2}})=0$. We have long exact sequences in both equivariant and singular cohomology:
\begin{center}
	\begin{tikzpicture}[node distance=2cm, descr/.style={fill=white, inner sep=2.5pt}]
		\def\hscale{4.5}
		\def\vscale{2}
		\def\hbuff{1.2}
		\def\vbuff{.3}
		\draw(0,0) node {$\rH21(\Gr_2\R^{3,2})$};
		\draw(0,\vscale) node {$\rH21(\Gr_2\R^{4,2})$};
		\draw(\hscale,0) node {$H^2_\text{sing}(\Gr_2\R^{3,2})$};
		\draw(\hscale,\vscale) node {$H^2_\text{sing}(\Gr_2\R^{4,2})$};
		\draw[->](\hbuff,\vscale) to node[descr] {$\psi$} (\hscale-\hbuff,\vscale);
		\draw[->](\hbuff,0) to node[descr] {$\psi$} (\hscale-\hbuff,0);
		\draw[->](0,\vscale-\vbuff) to node[descr] {$i^*$} (0,\vbuff);
		\draw[->](\hscale,\vscale-\vbuff) to node[descr] {$i^*$} (\hscale,\vbuff);
		\draw(0,2*\vscale) node {$\rH21(\sfrac{\Gr_2\R^{4,2}}{\Gr_2\R^{3,2}})=0$};
		\draw[->](0,2*\vscale-\vbuff) to (0,\vscale+\vbuff);
		\draw(.5,\vscale-.5) node {$x$};
		\draw(.5,.5) node {\tiny $\yng(1,1)$};
		\draw(\hscale-.5,.5) node {\tiny $\yng(1,1)$};
		\draw(\hscale-.5,\vscale-.5) node {\tiny $\yng(1,1)$};
		\draw(\hscale+.7,\vscale-.5) node {\tiny $\yng(1,1)+\yng(2)$};
	\end{tikzpicture}
\end{center}
In the diagram for $\rH21$, since $i^*$ is injective, the element $x$ is sent to $\scalebox{0.5}{\yng(1,1)}$ (the unique nonzero element -- see Section \ref{n=3}) which is then sent to the corresponding Schubert class by the forgetful map $\psi$. This element $\scalebox{0.5}{\yng(1,1)}$ has two preimages in $H_{\text{sing}}^2\Gr_2(\R^{4,2})$, the elements $\scalebox{0.5}{\yng(1,1)}$ and $\scalebox{0.5}{\yng(1,1)}+\scalebox{0.5}{\yng(2)}$. This leaves four possibilities:
\begin{align}
	\psi(x)={\tiny\yng(1,1)}\text{\qquad and}&\qquad \psi(y)={\tiny \yng(2)}\\
	\psi(x)={\tiny\yng(1,1)}\text{\qquad and}&\qquad \psi(y)={\tiny\yng(1,1)}+{\tiny \yng(2)}\\
	\psi(x)={\tiny\yng(1,1)}+{\tiny\yng(2)}\text{\qquad and}&\qquad \psi(y)={\tiny \yng(2)}\\
	\psi(x)={\tiny\yng(1,1)}+{\tiny\yng(2)}\text{\qquad and}&\qquad \psi(y)={\tiny \yng(1,1)}
\end{align}
Note that actually (1) and (2) are equivalent up to the change of basis $x'=x$ and $y'=y+\tau x$. Cases (3) and (4) are also equivalent under the same change of basis.

To resolve the remaining ambiguity, define the self-map $P:\Gr_2(\R^{4,2})\to \Gr_2(\R^{4,2})$ by $V\mapsto V^\perp$. After $\psi$, $P^*$ maps Young diagrams to their transposes (see Appendix \ref{perp}, Corollary \ref{cor:perpmap}).
This makes scenario $(1)\sim(2)$ impossible by looking at $\H21$: While we would have $P^*(\psi(x))=P^*(\scalebox{0.4}{\yng(1,1)})=\scalebox{0.4}{\yng(2)}$, the element $\scalebox{0.4}{\yng(2)}\not\in\psi(P^*(\H21))=\psi(\H21)$. And so, up to choice of generator in $\H22$, which we will denote by $\scalebox{0.5}{\yng(1,1)}/\scalebox{0.5}{\yng(2)}$, we can represent $\H\bullet\bullet(\Gr_2(\R^{4,2}))$ as
\begin{figure}[H]
	\begin{tikzpicture}[scale=1.5]
		\def\pmax{ 4 }
		\def\qmax{ 2 }
		\draw[step=1cm,lightgray,very thin] (0,0) grid (\pmax+0.5,\qmax+.5);
		\draw[thick,->] (0,-0.5) -- (0,.5+\qmax);
		\draw[thick,->] (-0.5,0) -- (.5+\pmax,0);
		\foreach \x in {1,...,\pmax}
			\draw (\x cm,1pt) -- (\x cm,-1pt) node[anchor=north] {$\x$};
		\foreach \y in {1,...,\qmax}
			\draw (1pt,\y cm) -- (-1pt,\y cm) node[anchor=east] {$\y$};
		\draw (0,0) node[above right] {$\emptyset$};
		\draw (1,1) node[above right] {\scalebox{.7}{\yng(1)}};
		\draw (2,1) node[above right] {$\scalebox{.7}{\yng(1,1)}+\scalebox{.7}{\yng(2)}$};
		\draw (2,2) node[above right] {\scalebox{.7}{\yng(1,1)}\,/\,\scalebox{.7}{\yng(2)}};
		\draw (3,2) node[above right] {\scalebox{.7}{\yng(1,2)}};
		\draw (4,2) node[above right] {\scalebox{.7}{\yng(2,2)}};
	\end{tikzpicture}
	\caption{The rank table for $\H\bullet\bullet(\Gr_2(\R^{4,2}))$, with generators labeled by their images under $\psi$.}
\end{figure}

Note that this version is indeed compatible with $P^*$. In $\H21$ the involution fixes
${\tiny\yng(1,1)}+{\tiny\yng(2)}={\tiny\yng(2)}+{\tiny\yng(1,1)}$ and in $\H22$, $P^*$ interchanges
 $ {\tiny\yng(1,1)/\yng(2)}$ and ${\tiny\yng(2)/\yng(1,1)}$. And so in addition to knowing the module structure of this cohomology, we know the action of the forgetful map in terms of Schubert classes. This is a first step towards determining the equivariant Schubert calculus, which is outside the scope of this paper.\\

\subsection{$\Gr_2(\R^{n,2})$ for $n=5,6$ or $7$}
$\ $\\

Now things start to become more straightforward.\\

Begin with the following ingredients table for $\Gr_2(\R^{5,2})$:
\begin{center}
	$I(+-+-+)=$
	\begin{tabular}{||c|c|c|c|c|c|c|}
	&&&\scalebox{.5}{\young(-,--)}\,\,&&\scalebox{.5}{\young(+-,+--)}\,\,&\scalebox{.5}{\young(-+-,+-+)}\,\,\\
	\hline
	&&&\scalebox{.5}{\young(-+-)}\,\,&\scalebox{.5}{\young(-,++-)}\,\,\scalebox{.5}{\young(+-,-+)}\,\,&&\\
	\hline
	&\scalebox{.5}{\young(-)}\,\,&\scalebox{.5}{\young(+-)}\,\,\scalebox{.5}{\young(-,+)}\,\,&&&&\\
	\hline
	$\emptyset$\,&&&&&&\\
	\hline
	\hline
\end{tabular}\,.
	
\end{center}
First observe that this construction of $\Gr_2(\R^{+-+-+})$ inherits the Kronholm shift of its subspace $\Gr_2(\R^{+-+-})$. The inclusion $i:\R^{+-+-}\inj\R^{+-+-+}$ induces  
$$i^*: \H21(\Gr_2{\R^{5,2}})\to \H21(\Gr_2{\R^{4,2}})=\Zt.$$ If $\Gr_2(\R^{+-+-+})$ \emph{didn't} also have a differential hitting $\theta\,\scalebox{0.4}{\yng(1,2)}$, then we would have $\H21(\Gr_2{\R^{5,2}})=(\Zt)^2$, giving $i^*$ a nonzero kernel, and also $$\psi:\H21(\Gr_2{\R^{5,2}})\inj H^2_{\text{sing}}(\Gr_2(\R^5))=(\Zt)^2$$ 
would be an isomorphism. Since in singular cohomology the inclusion induces an isomorphism $i^*:H^{2}_\text{sing}(\Gr_2(\R^5))\to H^{2}_\text{sing}(\Gr_2(\R^4))$, this would be a failure of naturality. Hence we again have nonzero $d:\langle\scalebox{0.4}{\yng(2)},\scalebox{0.4}{\yng(1,1)}\rangle\to\langle\theta\,\scalebox{0.4}{\yng(1,2)}\rangle$.

Since no other possible nonzero differentials arise in the first construction for bidegree reasons, we have re-derived the cohomology deduced in Example \ref{example}. We are now also justified in labeling these generators with their images under $\psi$, since each topological dimension above the second has generators in only one weight.
So we may represent $\H\bullet\bullet(\Gr_2(\R^{5,2}))$ as in Figure \ref{fig:representation}.
\begin{figure}[h]
	\begin{tikzpicture}[scale=1.5]
		\def\pmax{ 6 }
		\def\qmax{ 3 }
		\draw[step=1cm,lightgray,very thin] (0,0) grid (\pmax+0.5,\qmax+.5);
		\draw[thick,->] (0,-0.5) -- (0,.5+\qmax);
		\draw[thick,->] (-0.5,0) -- (.5+\pmax,0);
		\foreach \x in {1,...,\pmax}
			\draw (\x cm,1pt) -- (\x cm,-1pt) node[anchor=north] {$\x$};
		\foreach \y in {1,...,\qmax}
			\draw (1pt,\y cm) -- (-1pt,\y cm) node[anchor=east] {$\y$};
		\draw (0,0) node[above right] {$\emptyset$};
		\draw (1,1) node[above right] {\scalebox{.5}{\yng(1)}};
		\draw (2,1) node[above right] {$\scalebox{.5}{\yng(1,1)}+\scalebox{.5}{\yng(2)}$};
		\draw (2,2) node[above right] {\scalebox{.5}{\yng(1,1)}/\scalebox{.5}{\yng(2)}};
		\draw (3,2) node[above right] {\scalebox{.5}{\yng(3)},\,\,\scalebox{.5}{\yng(1,2)}};
		\draw (4,2) node[above right] {\scalebox{.5}{\yng(1,3)},\,\,\scalebox{.5}{\yng(2,2)}};
		\draw (5,3) node[above right] {\scalebox{.5}{\yng(2,3)}};
		\draw (6,3) node[above right] {\scalebox{.5}{\yng(3,3)}};
	\end{tikzpicture}
	\caption{A representation of $\H\bullet\bullet(\Gr_2(\R^{5,2}))$}
	\label{fig:representation}
\end{figure}

We will not continue further with these forgetful map calculations, but see Section \ref{warning} for further discussion of difficulties with this question.

As we continue to investigate $\Gr_2(\R^{n,2})$ for larger $n$, we will see that no new differentials ever arise if we use $I(+-+-+\ldots+)$. At first this is trivial. We switch to jump sequence notation for space reasons, omitting the square brackets but parenthesizing a few elements to discuss. For $\Gr_2(\R^{6,2})$, the ingredients table is
\begin{center}
	$I(+-+-++)=$
	\begin{tabular}{||p{9pt}|p{9pt}|p{18pt}|p{15pt}|p{9pt}|p{9pt}|p{9pt}|p{9pt}|p{9pt}|}
		&&&&&&&&5,6\\
		\hline
		&&&$\!\!(2,4)$&&3,5&3,6   4,5&4,6&\\
		\hline
		&&&1,5&1,6   2,5   3,4&2,6&&&\\
		\hline
		&1,3&(1,4) (2,3)&&&&&&\\
		\hline
		1,2&&&&&&&&\\
		\hline
		\hline
		$_0$&&$_2$&&$_4$& &$_6$&&$_8$
	\end{tabular}
\end{center}
and for $\Gr_2(\R^{7,2})$, the ingredients table is
\begin{center}
	$I(+-+-+++)=$
	\scalebox{.9}{\begin{tabular}{||p{10pt}|p{12pt}|p{18pt}|p{18pt}|p{12pt}|p{12pt}|p{12pt}|p{12pt}|p{12pt}|p{12pt}|p{12pt}|}
		&&&&&&&&5,6&5,7&6,7\\
		\hline
		&&&$\!\!(2,4)$&&3,5&3,6   4,5&3,7   4,6&4,7&&\\
		\hline
		&&&1,5&1,6   2,5   3,4&1,7   2,6&2,7&&&&\\
		\hline
		&1,3&(1,4) (2,3)&&&&&&&&\\
		\hline
		1,2&&&&&&&&&&\\
		\hline
		\hline
		$_0$&&$_2$&&$_4$& &$_6$&&$_8$&&$_{10}$
	\end{tabular}}\,.
\end{center}
The only possible differentials, just for bidegree reasons, would occur between the parenthetical entries. That is, with the exception of $[2,4]=\scalebox{0.4}{\yng(1,2)}$, no other generator has a weight high enough that its lower cone falls within range of a possible differential. Again by naturality this one Kronholm shift occurs, and we have our answer.

\subsection{When $n=8$}\label{twoeighttwo}
$\ $\\
However, when we get to $\Gr_2(\R^{8,2})$, we have $I(+-+-++++)=$
\begin{center}
	\scalebox{0.9}{\begin{tabular}{||p{10pt}|p{10pt}|p{10pt}|p{10pt}|p{10pt}|p{10pt}|p{10pt}|p{18pt}|p{18pt}|p{10pt}|p{10pt}|p{10pt}|p{10pt}|}
		&&&&&&&&(5,6)&5,7&5,8   6,7&6,8&7,8\\
		\hline
		&&&2,4&&3,5&3,6   4,5&3,7   4,6&3,8   4,7&4,8&&&\\
		\hline
		&&&1,5&1,6   2,5   3,4&1,7   2,6&1,8   2,7&$\!(2,8)$&&&&&\\
		\hline
		&1,3&1,4   2,3&&&&&&&&&&\\
		\hline
		1,2&&&&&&&&&&&&\\
		\hline
		\hline
		$_0$&&$_2$&&$_4$& &$_6$&&$_8$&&$_{10}$&&$_{12}$
	\end{tabular}}
\end{center}
with a differential possible (at least in terms of the bigrading) from $[2,8]$ to $ \theta[5,6]$, or in Young notation, $\scalebox{.4}{\yng(1,6)}\mapsto\theta\,\, \scalebox{.4}{\yng(4,4)}$. But notice that whereas our nonzero differential back in Section \ref{twofourtwo} had both $\scalebox{.4}{\yng(1,1)}$ and $\scalebox{.4}{\yng(2)}$ fitting inside of $\scalebox{.4}{\yng(1,2)}$, that is not the case here.\par

This is significant because as seen in Section \ref{we2}, containment of subvarieties corresponds to containment of Young diagrams. As $\scalebox{.4}{\yng(1,6)}\not\subset \scalebox{.4}{\yng(4,4)}$ or equivalently in jump sequence notation, as $[2,8]\not\prec[5,6]$, we know that $X_{[2,8]}\not\subseteq X_{[5,6]}$ and so by Theorem \ref{fixedthm}, attaching $\Omega_{[5,6]}$ creates no nonzero differentials.

In fact, this generalizes for $\Gr_2(\R^{n,2})$ with $n\ge 8$. If we chose the identification $\R^{n,2}=\R^{+-+-}\oplus(\R^+)^{n-4}$, the representation cell structure and hence the ingredients table $I(\R^{+-+-}\oplus(\R^+)^{n-4})$ is as follows. A jump sequence $[j_1,j_2]$, will give rise to a cell of topological dimension $(j_1-1)+(j_2-2)$ and by observation will have weight $w([j_1,j_2])=$

\[\begin{cases}
	1 \quad\text{if } [j_1,j_2]=[1,3],[1,4]\text{ or }[2,3]& \text{from}\quad\ds{\small\left[{1\atop 0}{\atop\young(-)\,1}\right]},{\small\left[{1\atop 0}{\atop\young(+-)\,1}\right]},{\small\left[{\young(-)\,1\,\,\,\,\atop\young(+)\,0\,1}\right]}\\
	\\
	2 \quad\text{if } [j_1,j_2]=[3,4]\text{ or }[2,\ge5]& \text{from}\quad \ds{\small\left[{\young(+-)\,1 \atop\young(+-)\,0}{\atop 1}\right]},{\small\left[{\young(-)\,1 \atop\young(+)\,0}{\atop \young(+-+)\,\dots\young(+)\,1}\right]}\\
	\\
	2 \quad\text{if } [j_1,j_2]=[1,\ge 5]& \text{from}\quad \ds{\small \left[{1\atop 0}{\atop\young(-+-+)\,\dots\young(+)\,1}\right]}\\
	\\
	3 \quad\text{if } [j_1,j_2]=[2,4]\text{ or }[3,\ge5]& \text{from}\quad \ds{\small\left[{\young(-)\,1 \atop\young(-)\,0}{\atop \young(-)\,1}\right]},{\small\left[{\young(+-)\,1 \atop\young(+-)\,0}{\atop \young(-+)\,\dots\young(+)\,1}\right]}\\
	\\
	3 \quad\text{if } j_1=4& \text{from}\quad \ds{\small\left[{\young(-+-)\,1 \atop\young(+-+)\,0}{\atop \young(+)\,\dots\young(+)\,1}\right]}\\
	\\
	4 \quad\text{if } j_1\ge 5& \text{from}\quad\ds{\small\left[{\young(+-+-+)\,\dots\young(+)\,1 \atop\young(+-+-+)\,\dots\young(+)\,0}{\atop \young(+)\,\dots\young(+)\,1}\right]}.
\end{cases}
\]
For example $\Gr_2(\R^{10,2})$ has ingredients $I(+-+-++++++)=$

\begin{center}
	\scalebox{.8}{\begin{tabular}{||p{10pt}|p{10pt}|p{18pt}|p{18pt}|p{10pt}|p{10pt}|p{10pt}|p{10pt}|p{16pt}|p{16pt}|p{16pt}|p{16pt}|p{16pt}|p{16pt}|p{16pt}|p{16pt}|p{16pt}}
		&&&&&&&&&&&&&&&&\\
		\hline
		&&&&&&&&5,6&5,7&5,8   6,7&5,9   6,8&5,10   6,9   7,8&6,10   7,9&7,10   8,9&8,10&9,10\\
		\hline
		&&&(2,4)&&3,5&3,6   4,5&3,7   4,6&3,8   4,7&3,9   4,8&3,10   4,9&4,10&&&&&\\
		\hline
		&&&1,5&1,6   2,5   3,4&1,7   2,6&1,8   2,7&1,9   2,8&1,10   2,9&2,10&&&&&&&\\
		\hline
		&1,3&(1,4) (2,3)&&&&&&&&&&&&&&\\
		\hline
		1,2&&&&&&&&&&&&&&&&\\
		\hline
		\hline
		$_0$&&&&&$_5$&&&&&$_{10}$&&&&&$_{15}$
	\end{tabular}}\,.\\
\end{center}
$\ $\\
For generators above topological dimension $3$, the only possible differentials (that is, possible with respect to bidegree) supported by these generators would be maps from $\alpha$ in bidegree $(x,2)$ to $\theta \beta$ for generators $\beta$ in $(x+1,4)$. These $\alpha$ will have jump sequences $[1,x+2]$ or $[2,x+1]$, while the $\beta$'s jump sequence could be $[5,x-1]$, $[6,x-2]$, $[7,x-3]$ etc. In any case, the second number in the jump sequence of each $\alpha$ will be larger than that of a corresponding $\beta$, so there is no dominance relation, or in terms of Young diagrams, $\alpha \not\subseteq \beta$. Now by Theorem \ref{fixedthm} this differential is actually zero.\par 

And so with the exception of the lone nonzero differential to $\theta[2,4]$, i.e. $\theta\,\scalebox{.4}{\yng(1,2)}$, all differentials are zero. We can now count the number of generators ending up in each bidegree. Row by row, if $M=\H\bullet\bullet(\Gr_2(\R^{n,2}))$ for $n\ge 8$,

\begin{figure}[H]
\centering
\begin{minipage}{.4\textwidth}
  \centering
  \begin{align*}
	\rank^{p,0}_{\Mt}M&=\begin{cases}
		1&p=0\\
		0&\text{else}.
	\end{cases}\\
	\rank^{p,1}_{\Mt}M&=\begin{cases}
		1&p=1,2\\
		0&\text{else}
	\end{cases}
  \end{align*}
\end{minipage}%
\begin{minipage}{.6\textwidth}
  \centering
  \begin{align*}
  	\rank^{p,2}_{\Mt}M&=\begin{cases}
  		3 & p=4\\
  		2 & p=3\text{\quad or \quad}5\le p\le n-2\\
  		1 & p=2,n-1\\
  		0 & \text{else}
  	\end{cases}\\
	\rank^{p,3}_{\Mt}M&=\begin{cases}
		2 & 6\le p\le n\\
		1 & p=5,n+1\\
		0 & \text{else}
	\end{cases}\\
	\rank^{p,4}_{\Mt}M&=\begin{cases}
		\lceil\frac {p-7}2\rceil & 8\le p\le n+1\\
		n-1-\lceil\frac p2\rceil & n+2\le p\le 2n-4\\
		0 & \text{else}
	\end{cases}
  \end{align*}
\end{minipage}
\end{figure}

This can also be rewritten to obtain the equally unattractive formula of Theorem \ref{uglyformula}.

\subsection{Warning}
	\label{warning}
	We must be careful not to get carried away in assuming that the images under $\psi$ of these generators correspond to the Schubert cells which are their ``reason'' for appearing where they do in cohomology\footnote{Notice that we stopped labeling generators with their forgetful images at $\Gr_2(\R^{5,2})$.}. For example, in constructing $\Gr_2(\R^{3,1})$, we have $I(-++)=$
	\begin{center}
		\begin{tikzpicture}
			\draw (0.0,0.0) node[above right] {\small$\emptyset$};
			\draw (0.5,0.0) node[above right] {\small$\yng(1)$};
			\draw (1.25,1.125) node[above right] {\small$\yng(1,1)$};
			\draw[-] (0.0,0)--(0.0,2.25);
			\draw[-] (0.5,0)--(0.5,2.25);
			\draw[-] (1.25,0)--(1.25,2.25);
			\draw[-] (0,0.0)--(2.0,0.0);
			\draw[-] (0,0.625)--(2.0,0.625);
			\draw[-] (0,1.125)--(2.0,1.125);
		\end{tikzpicture}\
	\end{center}
	which must shift to
	\begin{center}
		\begin{tikzpicture}
			\draw (0.0,0.0) node[above right] {\small$\emptyset$};
			\draw (0.5,0.5) node[above right] {\small$\yng(1)$};
			\draw (1.25,0.5) node[above right] {\small$\yng(1,1)$};
			\draw[-] (0.0,0)--(0.0,1.625);
			\draw[-] (0.5,0)--(0.5,1.625);
			\draw[-] (1.25,0)--(1.25,1.625);
			\draw[-] (0,0.0)--(2.0,0.0);
			\draw[-] (0,0.5)--(2.0,0.5);
		\end{tikzpicture}\
	\end{center}
	because, for example, $\Gr_2(\R^{3,1})\iso \Gr_1(\R^{3,1})$. (See Section \ref{we1}.) However when we proceed to build $\Gr_2(\R^{4,2})$ by attaching the remaining cells of $I(-++-)=$
	\begin{center}
		\begin{tikzpicture}[scale=1.2]
			\draw (0.0,0.0) node[above right] {\small$\emptyset$};
			\draw (0.5,0.5) node[above right] {\small$\yng(1)$};
			\draw (1.25,0.5) node[above right] {\small$\yng(1,1)$};
			\draw[-] (0.0,0)--(0.0,2);
			\draw[-] (0.5,0)--(0.5,2);
			\draw[-] (1.25,0)--(1.25,2);
			\draw[-] (2.25,0)--(2.25,2);
			\draw[-] (3.25,0)--(3.25,2);
			\draw[-] (0,0.0)--(4.25,0.0);
			\draw[-] (0,0.5)--(4.25,0.5);
			\draw[-] (0,1.37)--(4.25,1.37);
			\color{blue}
			\draw (1.25,1.4) node[above right] {\small$\yng(2)$};
			\draw (2.25,1.4) node[above right] {\small$\yng(1,2)$};
			\draw (3.25,1.4) node[above right] {\small$\yng(2,2)$};
		\end{tikzpicture}\,,
	\end{center}
	there are no possible nonzero differentials, and so we may be tempted to keep these Young diagram labelings, and assert that the forgetful map $$\psi:\H\bullet\bullet(\Gr_2(\R^{4,2}))\to H_\text{sing}(\Gr_2(\R^{4}))$$ sends these generators to the non-equivariant Schubert cell corresponding to those Young diagrams. In fact we know from Section \ref{twofourtwo} that this is \emph{false}. Each attachment of a new cell raises doubts as to the forgetful image of the cohomology. Put another way, maps in equivariant cohomology induced by inclusion of Grassmannians need not respect Schubert symbols in singular cohomology. 
	
	For this reason, when we looked at the ingredients table for $\Gr_2(\R^{10,2})$ above, while we know all of the differentials, and thus the ranks in each dimension, we don't (yet) have a good reason to assign to these generators the Schubert symbols we would naturally wish to.

\section{Some infinite Grassmannians}\label{infgrass}
We can now also deduce the cohomologies of the analogous infinite Grassmannians which follow from these results. As a consequence of Theorem 6.2 of \cite{buddies}, the infinite Grassmannian $\Gr_2(\R^{\infty,2})$ will have free cohomology and a zero $\lim^1$ term, and so from Theorem \ref{uglyformula}, the rank table begins

\begin{figure}[H]
	\begin{tikzpicture}[scale=0.7]
		\def\pmax{13}
		\def\qmax{4}
		\draw[step=1cm,lightgray,very thin] (0,0) grid (\pmax+0.5,\qmax+.5);
		\draw[thick,->] (0,-0.5) -- (0,.5+\qmax);
		\draw[thick,->] (-0.5,0) -- (.5+\pmax,0);
		\draw (7.18,3.3) node {$2$};
		\draw (2.18,1.3) node {$1$};
		\draw (6.18,2.3) node {$2$};
		\draw (7.18,2.3) node {$2$};
		\draw (6.18,3.3) node {$2$};
		\draw (2.18,2.3) node {$1$};
		\draw (1.18,1.3) node {$1$};
		\draw (3.18,2.3) node {$2$};
		\draw (0.18,0.3) node {$1$};
		\draw (8.18,2.3) node {$2$};
		\draw (4.18,2.3) node {$3$};
		\draw (5.18,3.3) node {$1$};
		\draw (8.18,3.3) node {$2$};
		\draw (5.18,2.3) node {$2$};
		\draw (8.18,4.3) node {$1$};
		\draw (8,0.1)--(8,-0.1) node[below] {$8$};
		\draw (9.18,2.3) node {$2$};
		\draw (10.18,2.3) node {$2$};
		\draw (11.18,2.3) node {$2$};
		\draw (12.18,2.3) node {$2$};
		\draw (13.55,2.3) node {$2\dots$};
		\draw (9.18,3.3) node {$2$};
		\draw (10.18,3.3) node {$2$};
		\draw (11.18,3.3) node {$2$};
		\draw (12.18,3.3) node {$2$};
		\draw (13.55,3.3) node {$2\dots$};
		\draw (9.18,4.3) node {$1$};
		\draw (10.18,4.3) node {$2$};
		\draw (11.18,4.3) node {$2$};
		\draw (12.18,4.3) node {$3$};
		\draw (13.55,4.3) node {$3\dots$};
	\end{tikzpicture}
\end{figure}

and then as dimension increases,

\begin{thm}
	 For $p\ge8$,

	\[\rank_{\Mt}\H p{\bullet}(\Gr_2(\R^{\infty,2}))=\begin{cases}
		\lceil\frac {p-7}2\rceil &\bullet=4\\
		2 & \bullet=3\\
		2 & \bullet=2\\
		0 &\text{else.}
	\end{cases}
	\]
 \end{thm}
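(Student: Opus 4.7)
The plan is to deduce this from Theorem \ref{uglyformula} by exhibiting $\Gr_2(\R^{\infty,2})$ as a filtered colimit of finite Grassmannians whose cohomology in each fixed bidegree stabilizes. Fix once and for all the ordered decomposition $\R^{\infty,2} = \R^{+-+-}\oplus\bigoplus_{i\ge 1}\R^+$, together with its truncations $\R^{n,2} = \R^{+-+-}\oplus\bigoplus_{i=1}^{n-4}\R^+$ for $n\ge 4$. These compatible decompositions give a Schubert-cell representation-cell structure on $\Gr_2(\R^{\infty,2})$ with cells indexed by jump sequences $[j_1,j_2]$, $1\le j_1<j_2$, whose bidegrees are computed by the same case analysis as in Section \ref{twoeighttwo}. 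Each finite Grassmannian $\Gr_2(\R^{n,2})$ is a closed representation-cell subcomplex of $\Gr_2(\R^{n+1,2})$, and hence of $\Gr_2(\R^{\infty,2})$, consisting of those cells with $j_2\le n$.

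Next, I would observe that for any fixed topological degree $p$, only finitely many cells of $\Gr_2(\R^{\infty,2})$ have topological dimension $\le p+1$, and each of these cells lies in $\Gr_2(\R^{n,2})$ once $n$ is large enough (concretely, once $n\ge p+3$). All differentials in the one-cell-at-a-time spectral sequence that affect $\H pq$ involve cells in this bounded range, so the cellular analysis performed at the end of Section \ref{twoeighttwo} is identical in the infinite case: the only nonzero Kronholm shift is the one supported on $[2,4]$, and Theorem \ref{fixedthm} rules out every other possible differential for exactly the same reason (no later jump sequence dominates the sequences supporting a potential differential). Consequently the restriction map
\[
\H pq(\Gr_2(\R^{\infty,2}))\longrightarrow \H pq(\Gr_2(\R^{n,2}))
\]
is an isomorphism for every $n\ge p+3$; the tower $\{\H pq(\Gr_2(\R^{n,2}))\}_n$ is eventually constant and so trivially Mittag-Leffler, resolving any potential $\lim^1$ obstruction.

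Finally, to conclude I would plug $p\ge 8$ and any $n\ge p+3\ge 11$ into Theorem \ref{uglyformula}: the inequalities $5\le p\le n-2$, $6\le p\le n$, and $8\le p\le n+1$ all hold, so the stable ranks are $\rank_{\Mt}^{p,2}=2$, $\rank_{\Mt}^{p,3}=2$, and $\rank_{\Mt}^{p,4}=\lceil(p-7)/2\rceil$, and every other weight contributes $0$ in topological degree $p$. The main obstacle here is the justification of the stabilization step: Kronholm's freeness theorem, as stated, requires finite-dimensional $\Ct$-spaces, so one must argue instead that the cohomology of the colimit agrees with the limit of the cohomologies of its finite subcomplexes in each bidegree. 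Once that is in hand, the rank computation and the vanishing of ranks outside weights $2,3,4$ are immediate from the explicit formula.
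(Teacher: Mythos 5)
Your proposal is correct and follows essentially the same route as the paper, which simply reads the stable ranks off of Theorem \ref{uglyformula} once $n$ is large relative to $p$. In fact you are more careful than the paper itself: the paper passes to $\Gr_2(\R^{\infty,2})$ without comment, whereas you explicitly note that for fixed $p$ only cells in a bounded range of dimensions can affect $\H{p}{q}$, that the restriction maps $\H pq(\Gr_2(\R^{\infty,2}))\to\H pq(\Gr_2(\R^{n,2}))$ are eventually isomorphisms, and that the eventually-constant tower kills any $\lim^1$ term --- a justification worth recording since Kronholm's freeness theorem is stated only for finite-dimensional complexes.
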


Similarly, as a consequence of Theorem \ref{knoneagain},

\begin{thm}
	\[\rank_{\Mt}\H pq(\Gr_k(\R^{\infty,1}))=\part(p,k,\infty,q).\]
\end{thm}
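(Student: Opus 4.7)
The plan is to pass to the colimit in Theorem \ref{knoneagain}. Realize $\Gr_k(\R^{\infty,1})$ as $\colim_n \Gr_k(\R^{n,1})$ under the $\Ct$-equivariant inclusions induced by $\R^{n,1}\inj \R^{n+1,1}$ (appending a trivial summand at the end). Under the ordered decomposition $\R^{n,1}=\R\triv^{k-1}\oplus\R\sgn\oplus\R\triv^{n-k}$ used in Lemma \ref{tracelemma}, these inclusions send Schubert cells to Schubert cells with the same jump sequences, exhibiting the representation-cell structure of $\Gr_k(\R^{n,1})$ as a subcomplex of $\Gr_k(\R^{n+1,1})$. Thus $\Gr_k(\R^{\infty,1})$ inherits a representation-cell structure indexed by all Young diagrams of width at most $k$, with cell bidegree $(|\lambda|, \trace(\lambda))$ by Lemma \ref{tracelemma}.

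Next I would record a stabilization fact: for any fixed bidegree $(p,q)$, once $n-k\ge p$ the upper bound on parts in $\part(p,k,n-k,q)$ becomes inactive, so
\[\part(p,k,n-k,q)=\part(p,k,\infty,q)\qquad\text{for all }n\ge k+p,\]
and this common value is finite. Correspondingly, the set of Schubert cells of $\Gr_k(\R^{\infty,1})$ in bidegree $(p,q)$ is finite and is already contained in $\Gr_k(\R^{n,1})$ for $n$ sufficiently large.

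Then I would verify that the collapse of Lemma \ref{tracelemma2} propagates to the colimit. The argument in that lemma---no differential can both preserve the inclusion order on Young diagrams and decrease fixed-set dimension---is purely combinatorial and depends only on the labelling of cells by Young diagrams, not on $n$. So the one-cell-at-a-time spectral sequence for $\Gr_k(\R^{\infty,1})$ also collapses at $E_1$, and $\H\bullet\bullet(\Gr_k(\R^{\infty,1}))$ is a free $\Mt$-module with one generator for each Young diagram of width at most $k$, in bidegree $(|\lambda|,\trace(\lambda))$. Equivalently, for each $(p,q)$ the inclusion $\Gr_k(\R^{n,1})\inj \Gr_k(\R^{\infty,1})$ induces an isomorphism on $\rank^{p,q}_{\Mt}$ once $n\ge k+p$, giving the claimed formula.

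The main technical obstacle is ensuring that $RO(\Ct)$-graded cohomology commutes with this colimit---that is, ruling out a $\varprojlim^1$ contribution. Because in each fixed bidegree the cohomology ranks stabilize (by the combinatorial stabilization above) and the restriction maps $\H pq(\Gr_k(\R^{n+1,1}))\to \H pq(\Gr_k(\R^{n,1}))$ are surjective for $n$ large (they are identifications of stable generators), the relevant tower of $\Mt$-modules is Mittag-Leffler, so $\varprojlim^1$ vanishes and the cohomology of $\Gr_k(\R^{\infty,1})$ equals the eventual value $\part(p,k,\infty,q)$ in each bidegree.
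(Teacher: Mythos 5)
Your proof is correct and follows the route the paper intends: the paper states this theorem as an immediate consequence of Theorem \ref{knoneagain}, leaving implicit exactly the steps you spell out (colimit presentation, stabilization of $\part(p,k,n-k,q)$ once $n-k\ge p$, persistence of the collapse, and vanishing of $\varprojlim^1$ via Mittag-Leffler). The only quibble is terminological: the diagrams indexing cells have at most $k$ \emph{rows} (parts), not width at most $k$.
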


Using the logic from Comment \ref{upside} which considers a $q$-by-$q$ square with a region north and a region to the east, this formula can also be expressed (for $p\ge q^2$)
\[\rank_{\Mt}\H pq(\Gr_k(\R^{\infty,1}))=\sum_{i=1}^{q(k-q)}\part(i,k-q,q,\ast)\part(p-q^2-i,q,\ast,\ast)\]
where the $\ast$ denotes omitting that restriction, so $\part(a,b,c,\ast)$ counts partitions of $a$ into $b$ parts not exceeding $c$ but having any trace, and $\part(a,b,\ast,\ast)$ counts partitions of $a$ into $b$ numbers of any size and trace.

\section{Complex Grassmannians}
\label{complexsection}
Modified statements of the results of this paper also apply to complex Grassmannians. Note that while in the real case, a Schubert cell indexed by a partition $\lambda$ of some integer $|\lambda|$ corresponds to a $|\lambda|$-disc, that is, $\Omega_\lambda(\R)\simeq e^{|\lambda|}$, in the complex case, each complex variable contributes two real dimensions: $\Omega_\lambda(\C)\simeq e^{2|\lambda|}$.\par

Define $\C\triv$ and $\C_{\text{sgn}}$ analogously so in $\C_{\text{sgn}}$ we have $z\mapsto -z$, and then let $\C^{p,q}=\C\triv^{p-q}\oplus \C_{\text{sgn}}^q$ as in the real case. For each partition $\lambda$ fitting inside a $k$-by-$(n-k)$ rectangle, whenever $\Gr_k(\R^{p,q})$ has $\Omega_\lambda(\R)\iso e^{a,b}$, the complex Grassmannian $\Gr_k(\C^{p,q})$ has $\Omega_\lambda(\C)\iso e^{2a,2b}$. Recall that a differential $d:\Sigma^{a,b}\Mt^+\to\Sigma^{a',b'}\Mt^-$ is possible only when $a'-b'<a-b$. Because this is equivalent to the inequality $2a'-2b'<2a-2b$, the \emph{possible} differentials on the $E_1$ page of a cellular filtration spectral sequence of a complex Grassmannian occur between the same Schubert cell filtrations as in the real case. And if the same possible differentials are, in fact nonzero, the Kronholm shifts (see formulas in \cite{buddies}) will be twice as large in the complex case, meaning the same possible differentials present themselves on $E_2$, and so on.

\subsection{Warning}
Because of the essentially un-geometric approach to differentials in this paper, we have no reason to claim that a nonzero differential in the real case must correspond to a nonzero differential in the complex case, or vice versa.\\

However, because the arguments in Lemma \ref{tracelemma} are almost identical with complex variables, we may conclude that in the $\C^{n,1}=\C\triv^{k-1}\oplus \C_{\text{sgn}}\oplus \C^{n-k}\triv$ construction of $\Gr_k(\C^{n,1})$, the trace of a Schubert cell determines its weight: $\Omega_\lambda(\C)\simeq e^{2|\lambda|,2\trace{\lambda}}$. As the complex Grassmannian still satisfies the assumptions of Theorem \ref{fixedthm}, we have the analogous theorem:

\begin{thm}\label{knoneagaincomplex} If $p$ or $q$ is odd, $\rank_{\Mt}^{p,q}\H \bullet\bullet(\Gr_k(\C^{n,1}))=0$, while
	\[\rank_{\Mt}^{2p,2q}\H \bullet\bullet(\Gr_k(\C^{n,1}))=\part(p,k,n-k,q).\]
\end{thm}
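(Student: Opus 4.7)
The plan is to adapt, nearly verbatim, the strategy used for the real case in Theorem \ref{knoneagain}. We build $\Gr_k(\C^{n,1})$ from equivariant Schubert cells using the decomposition $\C^{n,1} = \C\triv^{k-1} \oplus \C_{\text{sgn}} \oplus \C\triv^{n-k}$, and show that the resulting one-cell-at-a-time cellular spectral sequence immediately collapses.

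The first step is to establish the complex analog of Lemma \ref{tracelemma}: under this decomposition, every Schubert cell $\Omega_\lambda(\C)$ is a representation cell of bidegree $(2|\lambda|,\, 2\,\trace(\lambda))$. The argument of Lemma \ref{tracelemma} carries over once one replaces each $\scalebox{0.8}{\young(+)}$ box (formerly a summand of $\R\triv$) with a $\C\triv$ summand contributing real dimension two and weight zero, and each $\scalebox{0.8}{\young(-)}$ box with a $\C_{\text{sgn}}$ summand contributing real dimension two and weight two. The count of minus-boxes appearing in the canonical Schubert form is still exactly $\trace(\lambda)$, as the sign-flip analysis in Lemma \ref{tracelemma} is insensitive to whether each variable is real or complex; hence the cell has weight $2\,\trace(\lambda)$ and topological dimension $2|\lambda|$. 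The second step is to establish the complex analog of Lemma \ref{tracelemma2}: no differential of the cellular spectral sequence is nonzero. Indeed Lemmas \ref{jackolemma} and \ref{alphabeta} apply to the complex Grassmannian without modification, so a nonzero differential from generator $\alpha$ to generator $\beta$ would require both $\alpha \subseteq \beta$ and the strict fixed-set-dimension inequality $2|\alpha|-2\,\trace(\alpha) > 2|\beta|-2\,\trace(\beta)$. But adding one box to a Young diagram raises $|\lambda|$ by one and raises $\trace(\lambda)$ by at most one, so $\alpha \subseteq \beta$ forces $|\alpha|-\trace(\alpha) \leq |\beta|-\trace(\beta)$, hence $2|\alpha|-2\,\trace(\alpha) \leq 2|\beta|-2\,\trace(\beta)$, a contradiction. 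The spectral sequence thus collapses on $E_1$.

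The theorem then follows immediately: since every cell has even topological dimension and even weight, $\rank_{\Mt}^{p,q}\H\bullet\bullet(\Gr_k(\C^{n,1})) = 0$ whenever $p$ or $q$ is odd; and at an even bidegree $(2p,2q)$, the rank equals the number of cells in that bidegree, which by the first step is the number of partitions $\lambda$ fitting inside a $k \times (n-k)$ rectangle with $|\lambda|=p$ and $\trace(\lambda)=q$, namely $\part(p,k,n-k,q)$. The main obstacle, such as it is, is conceptual rather than technical: one must observe that doubling all bidegrees of the ingredients preserves both the key fixed-set-dimension inequality and the applicability of the Jack-O-Lantern machinery, so no genuinely complex-specific argument is needed beyond the weight computation in step one.
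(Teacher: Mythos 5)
Your proposal is correct and follows essentially the same route as the paper: the paper likewise uses the decomposition $\C^{n,1}=\C\triv^{k-1}\oplus\C_{\text{sgn}}\oplus\C\triv^{n-k}$, observes that the argument of Lemma \ref{tracelemma} goes through with complex variables to give $\Omega_\lambda(\C)\simeq e^{2|\lambda|,2\trace\lambda}$, and notes that doubling all bidegrees preserves the fixed-set-dimension inequality governing possible differentials, so the collapse argument of the real case applies unchanged.
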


Because the arguments of Section \ref{twontwo} are identical if we just double every bidegree, and the ``perp map'' argument in Appendix \ref{perp} applies to both the real and complex case, we can also conclude

\begin{thm}\label{dividebytwo}
	\[\rank^{p,q}_{\Mt}\H\bullet\bullet(\Gr_2(\C^{n,2}))=\begin{cases}
		\rank_{\Mt}^{\frac p2,\frac q2}\H\bullet\bullet(\Gr_2(\R^{n,2}))&\text{$p$ and $q$ even}\\
		0&\text{else}.
	\end{cases}
	\]
\end{thm}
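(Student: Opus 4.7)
The plan is to mirror the arguments of Section \ref{twontwo} verbatim, but with every bidegree doubled, and to verify that each step goes through in the complex setting. For the ordered decomposition $\C^{n,2}=\C^{+-+-}\oplus(\C^+)^{n-4}$, the Schubert cells $\Omega_\lambda(\C)$ are indexed by the same Young diagrams as in the real case, and by the discussion preceding the theorem, if $\Omega_\lambda(\R)\simeq e^{a,b}$ then $\Omega_\lambda(\C)\simeq e^{2a,2b}$. In particular, the $E_1$-page of the one-cell-at-a-time spectral sequence for $\Gr_2(\C^{n,2})$ is obtained from that of $\Gr_2(\R^{n,2})$ by doubling every bidegree, so the first bullet of the claim --- vanishing in odd bidegrees --- is automatic.

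First I would check that the set of \emph{possible} (bidegree-admissible) differentials matches between the two cases. As noted above, the inequality governing when a differential $\Sigma^{a,b}\Mt^+\to \Sigma^{a',b'}\Mt^-$ can be nonzero is preserved under doubling, and the hierarchical cell structure indexed by Young diagrams is identical. Thus Theorem \ref{fixedthm} rules out exactly the same collection of differentials in the complex case as in the real case. Next I would verify that the differentials that \emph{must} be nonzero in the real case (forced by naturality under the inclusions $\Gr_2(\R^{m,2})\hookrightarrow\Gr_2(\R^{m+1,2})$ and by comparison with singular cohomology via the forgetful long exact sequence) have exact analogues for the complex inclusions $\Gr_2(\C^{m,2})\hookrightarrow\Gr_2(\C^{m+1,2})$. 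The same diagram chase works: the non-equivariant singular cohomology of $\Gr_2(\C^m)$ is also free abelian on the Schubert classes (now in doubled topological dimension), the inclusions induce the same combinatorial maps, and the forgetful map $\psi$ still detects the analogous contradictions.

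Finally I would handle the $n=3,4$ cases separately, exactly as in Sections \ref{n=3} and \ref{twofourtwo}, checking that the one essential Kronholm shift used there (the differential $\langle\scalebox{0.4}{\yng(2)},\scalebox{0.4}{\yng(1,1)}\rangle\to \langle\theta\,\scalebox{0.4}{\yng(1,2)}\rangle$) occurs for complex coefficients as well, now with the shift of twice the magnitude as predicted by the formulas in \cite{buddies}. The ``perp'' self-map $V\mapsto V^\perp$ is still available on $\Gr_2(\C^{n,2})$ (Appendix \ref{perp} works for $\C$ as stated in the excerpt), so the ambiguity in middle dimension is resolved in the same way. Assembling these pieces, each generator of $\H\bullet\bullet(\Gr_2(\R^{n,2}))$ in bidegree $(p,q)$ contributes precisely one generator of $\H\bullet\bullet(\Gr_2(\C^{n,2}))$ in bidegree $(2p,2q)$, proving the ranks agree.

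The main obstacle, and the only place any real care is needed, is verifying that no \emph{new} differentials appear in the complex case. A priori, doubling bidegrees could in principle allow new Kronholm shifts from the upper cone of one $\Sigma^{2a,2b}\Mt$ into the lower cone of another $\Sigma^{2a',2b'}\Mt$ that would not have been bidegree-admissible at half those values; but since the relevant inequality $a'-b' < a-b$ is linear and invariant under scaling, this does not happen. I would state this bookkeeping as a short lemma before concluding, just to make the correspondence between admissible differentials in the two cases explicit.
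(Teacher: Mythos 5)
Your proposal is correct and follows essentially the same route as the paper, which proves Theorem \ref{dividebytwo} by observing that the arguments of Section \ref{twontwo} carry over verbatim under doubling of bidegrees (the admissibility inequality $a'-b'<a-b$ being scale-invariant) and that the perp-map argument of Appendix \ref{perp} applies over $\C$ as well. Your version is in fact more careful than the paper's one-sentence justification, in particular in explicitly flagging that the forced nonzero differentials (not just the admissible ones) must be re-derived from naturality in the complex setting.
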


\subsection{Remark}
We can also give $\C$ the conjugation action, $z\mapsto \bar z$. Note that $\C_{\text{conj}}\iso \R^{2,1}$. And so $\Gr_k(\C^n_\text{conj})$ has Schubert cells $\Omega_\lambda\iso e^{2|\lambda|,|\lambda|}$. Purely for degree reasons, no possible differentials $\alpha\to \frac\theta{\rho^i\tau^j}\beta$ exist when $\alpha$ has bidegree $(2x,x)$ and $\beta$ has bidegree $(2y,y)$. Thus the spectral sequence for $\Gr_k(\C^n_{\text{conj}})$ collapses on the first page. Denoting $r_i=\dim H^{2i}_{\text{sing}}(\Gr_k(\C^n);\Zt)$, we have $$\H\bullet\bullet(\Gr_k(\C^n_{\text{conj}}))=\bigoplus_{i=0}^{k(n-k)}(\Sigma^{2i,i}\Mt)^{r_i}.$$

\subsection{Example}
	Consider $\Gr_2(\C^4_\text{conj})$. The action on the Schubert cell 
	\begin{center}
		\begin{tabular}{rl}
				$\Omega_{\scalebox{0.4}{\yng(1,2)}}$
				&=$\left\{\text{rowspace}\left[\begin{matrix}
					z_1&1&0&0\\
					z_2&0&z_3&1\\
				\end{matrix}\right]: z_i\in \C\right\}$\\
				&\\
				&$=\left\{\text{rowspace}\left[\begin{matrix}
					x_1+y_1i&1&0&0\\
					x_2+y_2i&0&x_3+y_3i&1\\
				\end{matrix}\right] :x_i,y_i \in \R\right\}$\\
				&\\
				&$\simeq e^6$
			\end{tabular}
	\end{center}
	sends
	\[\left[\begin{matrix}
					x_1+y_1i&1&0&0\\
					x_2+y_2i&0&x_3+y_3i&1\\
		\end{matrix}\right]
			\mapsto
		\left[\begin{matrix}
						x_1-y_1i&1&0&0\\
						x_2-y_2i&0&x_3-y_3i&1\\
		\end{matrix}\right]
	\]
	and so $\Omega_{\scalebox{0.4}{\yng(1,2)}}\simeq e^{6,3}$. Analogous consideration for the other Schubert cells give a spectral sequence whose $E_1$ page has generators as shown.

	\begin{center}
		\begin{tikzpicture}[scale=0.8]
				\def\pmax{ 8 }
				\def\qmax{ 4 }
				\draw[step=1cm,lightgray,very thin] (0,0) grid (\pmax+0.75,\qmax+.5);
				\draw[thick,->] (0,-0.5) -- (0,.5+\qmax);
				\draw[thick,->] (-0.5,0) -- (1+\pmax,0);
				\foreach \x in {1,...,\pmax}
					\draw (\x cm,1pt) -- (\x cm,-1pt) node[anchor=north] {$\x$};
				\foreach \y in {1,...,\qmax}
					\draw (1pt,\y cm) -- (-1pt,\y cm) node[anchor=east] {$\y$};
				\draw (0,0) node[above right] {$\emptyset$};
				\draw (2,1) node[above right] {\scalebox{.4}{\yng(1)}};
				\draw (4,2) node[above right] {\scalebox{.4}{{\yng(2)}}};
				\draw (4,2.3) node[above right] {\scalebox{.4}{{\yng(1,1)}}};
				\draw (6,3) node[above right] {\scalebox{.4}{\yng(1,2)}};
				\draw (8,4) node[above right] {\scalebox{.4}{\yng(2,2)}};
			\end{tikzpicture}
	\end{center}
As this collapses,
\[\H\bullet\bullet(\Gr_2(\C^4_\text{conj}))=\Mt \oplus \M21 \oplus (\M42)^{\oplus 2}\oplus \M63 \oplus \M84. \]

\subsection{Remark}
Finally, the observations in Section \ref{infgrass} can be similarly duplicated to infinite complex Grassmannians by replacing every $\Sigma^{a,b}\Mt$ with $\Sigma^{2a,2b}\Mt$. This includes the family $\Gr_k(\C_\text{conj}^\infty)$, which satisfies the finite-type condition of \cite{buddies}.

\appendix
\section{The Perp Map}\label{perp}

The goal of this appendix is to show how the map induced in singular cohomology by the  ``perpendicular complement'' map of Grassmannians acts on Schubert symbols. This result is needed in section \ref{twofourtwo}.

Throughout the appendix, let $\F$ denote either $\R$ or $\C$ as desired.

\begin{defn} Given a partition $\lambda=(\lambda_1,\lambda_2,\dots,\lambda_k)$, corresponding to a Schubert cell in $\Gr_k\F^n$, define the \textbf{transpose} $\lambda^T$ \label{transdef} by
\[\lambda^T=\left(\#\{j:\lambda_j> n-k-1\},\#\{j:\lambda_j> n-k-2\},\dots,\#\{j:\lambda_j>1\},\#\{j:\lambda_j>0\}\right).\]
Or more briefly, $$\lambda^T_i=\#\{j:\lambda_j> n-k-i\}\qquad\text{for } 1\le i\le n-k.$$ This partition corresponds to a Schubert cell in $\Gr_{n-k}\F^n$.
\end{defn}
As we might hope, the Young diagram of $\lambda^T$ looks like that of $\lambda$ but reflected across a diagonal.

\subsection{Example}
	If we consider $\scalebox{0.4}{\yng(1,3)}$ as indexing a cell of $\Gr_3(\F^7)$ then

	\begin{align*}
		\scalebox{.4}{\yng(1,3)}^T&=(0,1,3)^T\\
		&=(\#\{i:\lambda_i>7-3-1\},\#\{i:\lambda_i>2\},\#\{i:\lambda_i>1\},\#\{i:\lambda_i>0\})\\
		&=(\#\emptyset,\#\{2\},\#\{2\},\#\{1,2\})\\
		&=(0,1,1,2)\\
		&=\scalebox{.4}{\yng(1,1,2)}.
	\end{align*}

\subsection{Note}
	The four-tuple $(0,1,1,2)$ indexes a cell in $\Gr_{4}(\F^7)$. Had we instead considered $\scalebox{0.4}{\yng(1,3)}$ as indexing a cell of, for example, $\Gr_5(\F^{12})$ then we would have $(0,0,0,1,3)^\perp=(0,0,0,0,1,1,2)$, which indexes a cell of $\Gr_7(\F^{12})$.

Up to this point, when we expressed a Schubert cell as the collection of $k$-planes which are rowspaces of matrices of a certain form, we haven't bothered to make explicit reference to a choice of basis for $\F^n$. Now we will need to. 
\begin{defn}
	If we let $\{e_i\}_{i=1}^n$ be the standard orthonormal basis of $\F^n$, we will wish to denote the \textbf{reverse basis} $\{e_{n-i+1}\}_{i=1}^n$ by $\{\widehat{e}_i\}_{i=1}^n$. We will give subscripts to the $\rs$ operator, so that for a basis $\{b_i\}_{i=1}^n$ and a matrix $M$, $\rs_{\{b_i\}}(M)$ denotes the $\F$-span of the $\{b_i\}$-linear combinations taken from the rows of $M$, that is
	\begin{align*}
		\rs_{\{b_i\}}
			\left[\begin{matrix}
				x_{1,1}&\dots&x_{1,n}\\
				\vdots&\ddots&\vdots\\
				x_{k,1}&\dots &x_{k,n}
			\end{matrix}\right]
			=\left\langle\left\{ \sum_{j=1}^n x_{i,j}b_j \right\}_{i=1}^k\right\rangle_{\F.}
	\end{align*}
\end{defn}

In particular, we have
	\begin{align*}
			\rs_{\{e_i\}}
			\left[\begin{matrix}
				x_{1,1}&\dots&x_{1,n}\\
				\vdots&\ddots&\vdots\\
				x_{k,1}&\dots &x_{k,n}
			\end{matrix}\right]
			&=\rs_{\{\widehat{e}_i\}}
			\left[\begin{matrix}
				x_{1,n}&\dots&x_{1,1}\\
				\vdots&\ddots&\vdots\\
				x_{k,n}&\dots &x_{k,1}
			\end{matrix}\right]\\
			&=\rs_{\{\widehat{e}_i\}}
			\left[\begin{matrix}
				x_{k,n}&\dots&x_{k,1}\\
				\vdots&\ddots&\vdots\\
				x_{1,n}&\dots &x_{1,1}
			\end{matrix}\right].
	\end{align*}
\begin{defn}
Let $\Omega_\lambda$ denote the Schubert cell in the standard basis, while $\widehat\Omega_\lambda$ means the Schubert cell defined with respect to the reverse basis. So if $M$ is a matrix such that $\rs_{\{e_i\}}M\in\Omega_\lambda$, then $\rs_{\{\widehat e_i\}}M\in\widehat\Omega_\lambda$.
\end{defn}

Consider the equivariant homeomorphism
\begin{align*}
	P:\Gr_k(\F^{p,q})&\to \Gr_{p-k}(\F^{p,q})\\
	V&\mapsto V^\perp
\end{align*}
sending each $k$-plane $V\in\Gr_k(\F^{p,q})$ to its perpendicular complement (with respect to the dot product), the ($p-k$)-plane $P(V)=V^\perp\in \Gr_{p-k}(\F^{p,q})$.
It is a useful fact that this is a cellular map under the Schubert construction, mapping Schubert cells in a given flag indexed by $\lambda$ bijectively onto Schubert cells in the reverse flag with transpose Young diagrams: $$P(\Omega_\lambda)=\widehat\Omega_{\lambda^T}.$$
Before proving this fact, we give an example.

\subsection{Example}
	\label{ex:toyperp}
	Take $\Omega_{\,\scalebox{.3}{\yng(1,3)}}$ in $\Gr_2\F^5$. This is the collection of $\F$-planes of the form
	\[V=\rs_{\{e_i\}}
	\left[\begin{matrix}
		x_{1,1}&1&0&0&0\\
		x_{2,1}&0&x_{2,3}&x_{2,4}&1
	\end{matrix}\right]
	\]
	for all $x_{1,1},\,x_{2,1},\,x_{2,3},\,x_{2,4}\in\F$.
	A vector $\vec{y}=(y_1,\dots,y_5)$ is perpendicular to this plane if $y_1x_{1,1}+y_2=0$ and also $y_1x_{2,1}+y_3x_{2,3}+y_4x_{2,4}+y_5=0$. These relations let us write $\vec{y}$ just in terms of $y_1,y_3$ and $y_4$:
	\begin{align*}
		\vec{y}=(y_1,-y_1x_{1,1},y_3,y_4,-(y_1x_{2,1}+y_3x_{2,3}+y_4x_{2,4}))
		&=y_1(1,-x_{1,1},0,0,-x_{2,1})\\
		&+y_3(0,0,1,0,-x_{2,3})\\
		&+y_4(0,0,0,1,-x_{2,4}).
	\end{align*}
	in other words,
	\[\{\vec{y}:\vec{y}\perp V\}=\rs_{\{e_i\}}
	\left[\begin{matrix}
		1&-x_{1,1}&0&0&-x_{2,1}\\
		0&0&1&0&-x_{2,3}\\
		0&0&0&1&-x_{2,4}
	\end{matrix}\right]\in P\left(\Omega_{\,\scalebox{.3}{\yng(1,3)}}\right).
	\]
	Changing to the reverse flag, we have
	\[V^\perp=\rs_{\{\widehat e_i\}}
	\left[\begin{matrix}
		-x_{2,4}&1&0&0&0\\
		-x_{2,3}&0&1&0&0\\
		-x_{2,1}&0&0&-x_{1,1}&1
	\end{matrix}\right]
	\in\widehat\Omega_{\,\scalebox{.3}{\yng(1,1,2)}}=\widehat\Omega_{\left(\scalebox{.3}{\yng(1,3)}\right)^T}\subset \Gr_3(\F^5).
	\]

Since $(V^\perp)^\perp=V$, this map is invertible, and so $\Omega_{\,\scalebox{0.3}{\yng(1,3)}}$ and $\hat\Omega_{\,\scalebox{0.3}{\yng(1,1,2)}}$ are in bijective correspondence.\\

To prove that this phenomenon persists generally, we need two lemmas. The first is fairly obvious.

\begin{defn}
	A \emph{Boolean matrix} is one whose entries consist of 0 and 1.
\end{defn}

\begin{lem}\label{booleanlemma}
	 Boolean matrices with monotone columns are determined by their column sums. Similarly, boolean matrices with monotone rows are determined by their row sums.
\end{lem}
\begin{proof}
	If a boolean matrix $A=[\vec{a}_1\,\,\,\vec{a}_2\,\,\,\dots\,\,\,\vec{a}_k]$ has columns $\vec{a}_i$, each of which is weakly increasing, and the sum of the entries in $\vec{a}_i$ is $s$, then that column's last $s$ entries are 1, and the rest 0. The proofs if we replace ``increasing'' with ``decreasing'' or ``columns'' with ``rows'' are analogous.
\end{proof}

The second lemma is a combinatorial identity.

\begin{lem}
	\label{bijectionlemma}
	Fix $n$ and a partition $\lambda$ having $k<n$ terms not exceeding $n-k$. Let $H$ be the complement of the jump sequence for $\lambda$:
	$$H=[1,n]\setminus\{\lambda_l+l:1\le l\le k\}.$$ 
	Index $H$ by $H=\{h_i\}_{i=1}^{n-k}$ such that $h_i\le h_{i+1}$. Then for each $i$ we have
	\[\#\{j:\lambda_j+j>h_i\text{ and }1\le j\le k\}=\#\{j:\lambda_j>i-1\text{ and } 1\le j\le k\}.\]
\end{lem}	
\begin{proof}
	To prove that these sets have the same size, we will view their cardinalities as the sums of columns in boolean arrays. Define, for $i\in [1,n-k]$ and $j\in[1,k]$ the quantities
	\[\chi_{i,j}=\begin{cases}
		1&\lambda_j+j>h_i\\
		0&\text{else}
	\end{cases}
	\quad\text{and}\quad
	\xi_{i,j}=\begin{cases}
		1&\lambda_j>i-1\\
		0&\text{else}.
	\end{cases}
	\]
	We can now rephrase the lemma as the claim that for all $i$, $\sum_j \chi_{i,j}=\sum_j \xi_{i,j}$. Note that the $(n-k)$-by-$k$ boolean arrays $[\chi]$ and $[\xi]$ are monotonic in both $i$ and $j$, and hence by Lemma \ref{booleanlemma}, determined by these sums of their columns. They are also determined by the sums of their rows. And fixing $j$,
	\begin{align*}
		\sum_i\chi_{i,j}=\#\{i:\lambda_j+j>h_i\}&=\#\{h_i:h_i<\lambda_j+j\}\\
		&=\#\left([1,\lambda_j+j]\setminus{}\{\lambda_l+l\,:\,1\le l\le j\}\right)\\
		&=(\lambda_j+j)-j\\
		&=\lambda_j
	\end{align*}
	while
	\begin{align*}
		\sum_i \xi_{i,j} =\#\{i:\lambda_j>i-1\}=\#\{1,2,\dots,\lambda_j\}=\lambda_j.
	\end{align*}	
	Hence, in fact $\chi_{i,j}\equiv \xi_{i,j}$ for all $i$ and $j$. This proves the lemma.
\end{proof}

\subsection{Example}
	In Example \ref{ex:toyperp}, $n=5$, $k=2$ and $\lambda=[1,3]$ so $H=[1,5]\setminus\{2,5\}=\{1,3,4\}=\{h_1,h_2,h_3\}$, and $\chi_{i,j}\equiv \xi_{i,j}$, each have table $$\begin{tabular}{c|ccc}
	$j\setminus i$ &1&2&3\\
	\hline
	1&$1$ &$0$ &$0$ \\
	2&$1$ &$1$ &$1$ \\
\end{tabular}.$$

For example, $\chi_{3,2}=1$ because $3+2>4$ and $\xi_{3,2}=1$ because $3>3-1$.

And so 
\begin{align*}
	\#\{j:\lambda_j+j>h_1\}=&2=\#\{j:\lambda_j>1-1\}\\
	\#\{j:\lambda_j+j>h_2\}=&1=\#\{j:\lambda_j>2-1\}\\
	\#\{j:\lambda_j+j>h_3\}=&1=\#\{j:\lambda_j>3-1\}.
\end{align*}

\begin{thm}
	\label{perpthm}
	The perp map $P:\Gr_k(\F^n)\to \Gr_{n-k}(\F^n)$ is a homeomorphism sending each Schubert cell $\Omega_\lambda\subseteq \Gr_k(\F^n)$ homeomorphically to $\widehat\Omega_{\lambda^T}\subseteq \Gr_{n-k}(\F^n)$, for each $\lambda$ indexing Schubert cells of $\Gr_k(\F^n)$.
\end{thm}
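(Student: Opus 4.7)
The plan is to take a generic element of $\Omega_\lambda$ written in canonical form, compute its perpendicular complement coordinate-by-coordinate, and then re-express the result in the reverse basis so as to read off its Schubert symbol. Fix a partition $\lambda$ with jump sequence $j_l = \lambda_l + l$ for $l=1,\dots,k$, and let $H=\{h_1<\cdots<h_{n-k}\}$ be the complement in $[1,n]$ as in Lemma \ref{bijectionlemma}. A typical element of $\Omega_\lambda$ is $V=\rs_{\{e_i\}}(M)$, where the $k\times n$ matrix $M$ has $M_{l,j_l}=1$, $M_{l,j_m}=0$ for $m\ne l$, and free entries $M_{l,h}$ for $h\in H$ with $h<j_l$ (and zeros for the other $h\in H$). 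The $n-k$ orthogonality conditions $\sum_i y_i M_{l,i}=0$ solve uniquely for each $y_{j_l}$ in terms of the free coordinates $\{y_h:h\in H\}$, giving a linear parametrization of $V^\perp$.

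The first step is to package this parametrization as an $(n-k)\times n$ matrix $N$: its $i$-th row is the vector obtained by setting $y_{h_i}=1$ and $y_{h_m}=0$ for $m\ne i$. Explicitly $N_{i,h_i}=1$, $N_{i,h_m}=0$ for $m\ne i$, and $N_{i,j_l}=-M_{l,h_i}$ when $h_i<j_l$ (zero otherwise), so $V^\perp = \rs_{\{e_i\}}(N)$. Second, re-index in the reverse basis: if $N'$ denotes $N$ with its columns reversed, then $V^\perp=\rs_{\{\widehat e_i\}}(N')$, and the pivot of the $i$-th row of $N'$ sits at column $n+1-h_i$. To put $N'$ in canonical Schubert form relative to $\{\widehat e_i\}$ we reverse the row order (since $n+1-h_1>\cdots>n+1-h_{n-k}$), producing a matrix $\tilde N$ whose pivots are at the ascending positions $J_i := n+1-h_{n-k+1-i}$. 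At this point one verifies directly that the entries at positions $n+1-j_l\in[1,n]\setminus\{J_1,\dots,J_{n-k}\}$ are precisely the free variables of $\tilde N$ (and zeros are in the expected spots), so $\tilde N$ is indeed in canonical form, and its jump sequence is $(J_1,\dots,J_{n-k})$.

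The third step is the combinatorial identification $(J_i-i)_i = \lambda^T$. The number of free entries in the $i$-th row of $\tilde N$ equals $J_i-i$, and by the construction of $\tilde N$ this number also equals the count of indices $l$ with $j_l>h_{n-k+1-i}$, i.e. $\#\{l:\lambda_l+l>h_{n-k+1-i}\}$. Applying Lemma \ref{bijectionlemma} with index $n-k+1-i$ rewrites this as $\#\{l:\lambda_l>n-k-i\}$, which by Definition \ref{transdef} is exactly $\lambda^T_i$. Hence $J_i=\lambda^T_i+i$, so $P(\Omega_\lambda)\subseteq\widehat\Omega_{\lambda^T}$.

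Finally, because $P$ is its own inverse (up to the identification $P^2=\id$) the containment is an equality and the restriction $\Omega_\lambda\to\widehat\Omega_{\lambda^T}$ is a bijection. Since $P$ is continuous on the whole Grassmannian and its inverse is as well, the cellular restriction is a homeomorphism. The bookkeeping between $\lambda$, its jump sequence, the complementary set $H$, and the resulting reverse-basis pivots is the only real obstacle; Lemma \ref{bijectionlemma} is tailored precisely to bridge that gap, so once the canonical-form manipulations above are carried out the argument closes cleanly.

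\begin{corollary}\label{cor:perpmap}
The induced map $P^*$ on singular cohomology sends the Schubert class $[\Omega_\lambda]$ to $[\Omega_{\lambda^T}]$.
\end{corollary}
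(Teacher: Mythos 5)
Your proposal is correct and follows essentially the same route as the paper's proof: write $V\in\Omega_\lambda$ in canonical form, solve the orthogonality relations for the pivot coordinates in terms of the free coordinates indexed by $H$, pass to the reverse basis, and use Lemma \ref{bijectionlemma} together with Definition \ref{transdef} to identify the free-variable counts with $\lambda^T$. Your treatment is slightly more explicit than the paper's about putting the resulting matrix in canonical form and about why the containment $P(\Omega_\lambda)\subseteq\widehat\Omega_{\lambda^T}$ is an equality, but the substance is the same.
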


\begin{proof}
	Let $\lambda=[\lambda_1,\lambda_2,\dots,\lambda_k]$, and $H=[1,n]\setminus\{\lambda_l+l:1\le l\le k\}$. If $V\in \Omega_\lambda$,
	\begin{align*}
		V
		&=\rs_{\{e_i\}}
			\left[\begin{array}{*{20}c}
				x_{1,1}&\dots&x_{1,\lambda_1}&1&0&\dots&0&0&\dots&0\\
				x_{2,1}&\dots&x_{2,\lambda_1}&0&x_{2,\lambda_1+2}&\dots& 1&0&\dots & 0\\
				\vdots&&\vdots&\vdots&\vdots&&\vdots&\vdots&&\vdots\\
				x_{k,1}&\dots&x_{k,\lambda_1}&0&x_{k,\lambda_1+2}&\dots&0&x_{k,\lambda_2+3}&\dots\\
			\end{array}\right]\\
			&=\left\langle\left\{\sum_{{i\in H}\atop{i<\lambda_j+j}}x_{j,i}e_i+e_{\lambda_j+j} \right\}_{j=1}^k\right\rangle_{.}\\		
	\end{align*}
	for some $x_{i,j}\in\F$. A vector $\vec{y}=(y_1,\dots,y_n)$ lies in $P(V)=V^\perp$ if it is perpendicular to each row of the matrix, which gives the relations
	\begin{align*}
		y_{\lambda_1+1}&=-\sum_{i<\lambda_1+1}y_ix_{1,i}\\
		y_{\lambda_2+2}&=-\sum_{{i\in H}\atop{i<\lambda_2+2}}y_ix_{2,i}\\
		&\vdots\\
		y_{\lambda_j+j}&=-\sum_{{i\in H}\atop{i<\lambda_j+j}}y_ix_{j,i}\\
	\end{align*}
	expressible in the standard basis $\ds\{e_i\}_{i=1}^n$,
	\[
	\vec{y}
	=\sum_{i\in H}y_ie_i
	-\sum_{j=1}^k\left(
	\sum_{{i\in H}\atop{i<\lambda_j+j}}y_ix_{j,i}\right)
	e_{(\lambda_j+j)}
	=\sum_{i\in H} y_i\left(e_i-\sum_{\{j:\lambda_j+j>i\}}x_{j,i}e_{\lambda_j+j}\right)
	\]
	And so 
	the vector $\vec{y}$ lies in the span 

\begin{align*}
	\left\langle e_i-\sum_{\{j:\lambda_j+j>i\}}x_{j,i}e_{\lambda_j+j} \right\rangle_{i\in H}
	&=\left\langle \sum_{\{j:\lambda_j+j>i\}}(-x_{j,i}\widehat e_{n-\lambda_j-j+1})+\widehat e_{n-i+1}\right\rangle_{i\in H.}
\end{align*}

	
	This is $\rs_{\{\widehat e_i\}} M$ for an ($n-k$)-by-$n$ matrix $M$.	For each of the $n-k$ values in $H$, we want to count the number of free variables $x_{j,i}$ in that row. Enumerating $H=\{h_i\}_{i=1}^{n-k}$ and fixing $i$, by Lemma \ref{bijectionlemma} and Definition \ref{transdef}, this number is
	\begin{align*}
		\#\{j:\lambda_j+j>h_i\}&=\#\{j:\lambda_j\ge i-1\}\\
		&=\#\{j:\lambda_j\ge n-k-(n-k+1-i)\}\\
		&=(\lambda^T)_{(n-k+1)-i}.
	\end{align*}
	In other words, if $r$ counts up from the bottom row, the $r$th row has $\lambda^T_r$ free variables. 
	
	And so $P(\Omega_\lambda)=\widehat\Omega_{\lambda^T}$.
\end{proof}

Finally, we want to deduce the action of the map induced by $P$ on singular cohomology.

\begin{cor}
	\label{cor:perpmap}
	For $P:\Gr_k(\F^n)\to \Gr_{n-k}(\F^n)$ the perp map, $P^*([\Omega_\lambda])$ is $[\Omega_{\lambda^T}]$ for every $\lambda$ indexing a cell of $\Gr_k(\F^n)$.
\end{cor}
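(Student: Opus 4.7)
The plan is to compute $P^*[\Omega_\lambda]$ by pairing it against the cellular Schubert basis of $H_*(\Gr_{n-k}(\F^n);\Zt)$. For any Schubert cell $\Omega_\mu\subseteq \Gr_{n-k}(\F^n)$, naturality of the Kronecker pairing together with Theorem \ref{perpthm} (applied to the perp map $\Gr_{n-k}(\F^n)\to\Gr_k(\F^n)$, which is again perpendicular-complement) gives
\[
\bigl(P^*[\Omega_\lambda]\bigr)(\Omega_\mu)\;=\;[\Omega_\lambda]\bigl(P_*\Omega_\mu\bigr)\;=\;[\Omega_\lambda]\bigl(\widehat\Omega_{\mu^T}\bigr).
\]
So the corollary reduces to evaluating the standard-flag Schubert cocycle $[\Omega_\lambda]$ on the opposite-flag Schubert cycle $\widehat\Omega_{\mu^T}$.

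I would handle this by first establishing the auxiliary identity $[\widehat\Omega_\nu]=[\Omega_\nu]$ in $H_*(\Gr_k(\F^n);\Zt)$ for every $\nu$: the flag-reversal $\phi:\F^n\to\F^n$ sending $e_i\mapsto\widehat e_i$ induces a self-homeomorphism of the Grassmannian that carries $\Omega_\nu$ onto $\widehat\Omega_\nu$ cell-for-cell, so it suffices to show its induced map on $\Zt$-homology is the identity. This is immediate over $\C$ by connectedness of $GL(n,\C)$, and over $\R$ by noting that modulo $2$ orientation issues are absent, so a path in $GL(n,\R)$ (possibly after composing $\phi$ with a single coordinate sign-flip, which likewise permutes Schubert cells among themselves) reduces $\phi$ to the identity at the level of $\Zt$-homology. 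Granted this, the pairing evaluates to $\delta_{\lambda,\mu^T}$, and $P^*[\Omega_\lambda]=[\Omega_{\lambda^T}]$ follows from Kronecker duality.

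The main obstacle is the flag-independence step, which one must argue carefully in the real case because $GL(n,\R)$ is disconnected. A convenient workaround, if the above feels delicate, is to perform the entire calculation in the opposite-flag cellular structure on $\Gr_{n-k}(\F^n)$: by Theorem \ref{perpthm}, $P$ is a cellular bijection from the standard structure on $\Gr_k(\F^n)$ to the opposite structure on $\Gr_{n-k}(\F^n)$, and its induced cochain map is literally the dual bijection $[\widehat\Omega_{\lambda^T}]^*\mapsto[\Omega_\lambda]^*$. The corollary then reduces to a single identification of cohomology bases across the two flags on a single Grassmannian rather than on both at once.
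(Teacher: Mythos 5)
Your proposal is correct and follows essentially the same route as the paper: both arguments combine Theorem \ref{perpthm} with the observation that standard-flag and reverse-flag Schubert classes agree mod $2$, established by a path-connectedness argument ($U(n)$ connected over $\C$; over $\R$ a path in $SO(n)$ after absorbing a single sign flip, which is harmless with $\Zt$ coefficients). Your Kronecker-pairing packaging is a cosmetic variation on the paper's direct identification of dual cocycles, not a genuinely different proof.
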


When the perp map is a self-map (and involution) of $\Gr_{k}(\F^{2k,q})$, Corollary \ref{cor:perpmap} can be used to rule out certain possibilities of the forgetful image of generators. We do this in Section \ref{n=4} when computing the images of generators of $\H\bullet\bullet(\Gr_2\R^{4,2})$ under $\psi$.

\begin{proof}[Proof of Corollary \ref{cor:perpmap}]
	In this paper, we refer to elements of cellular cohomology by their dual cells. That is, if $\F=\R$, the element $[\scalebox{0.4}{\yng(1,2)}]\in H^3(\Gr_k\R^n)$ is the class of the cocycle defined by $\Omega_\lambda\mapsto \delta_{\lambda,\scalebox{0.3}{\yng(1,2)}}$. For any two orthonormal bases $\{b_i\}$ and $\{\beta_i\}$ of $\R^n$, there exists a path $\gamma:I\to SO(n)$ so that $\gamma(0)b_i=b_i$ for $1\le i\le n$, $\gamma(1)b_i=\beta_i$ for $1\le i<n$ and either $\gamma(1)b_n=\beta_n$ or $\gamma(1)b_n=-\beta_n$. When $M$ is the matrix corresponding to some partition $\lambda$, the map $\rs_{\{\beta_1,\dots,\beta_n\}}M\mapsto \rs_{\{\beta_1,\dots,-\beta_n\}}M$ is a homeomorphism of $\Omega_\lambda$, and so in either case this one-parameter family shows that $[\widehat \Omega_\lambda]=\pm [\Omega_\lambda]=[\Omega_\lambda]$, as we are working mod 2. Of course, if $\F=\C$, as $U(n)$ is path-connected, we needn't even worry about this $\pm \beta_n$ issue.\par
	And so by Theorem \ref{perpthm}, $P^*([\Omega_\lambda])=[\widehat\Omega_{\lambda^T}]=[\Omega_{\lambda^T}]$.
\end{proof}

\newpage

\end{document}